\newtheorem{theorem}{Theorem}[section]
\newtheorem*{theorem*}{Theorem}
\newtheorem{definition}[theorem]{Definition}
\newtheorem{lemma}[theorem]{Lemma}
\newtheorem{proposition}[theorem]{Proposition}
\newtheorem{corollary}[theorem]{Corollary}
\newcommand{\R}{{\mathbb R}}
\newcommand{\N}{{\mathbb N}}
\newcommand{\F}{{\mathcal F}}
\newcommand{\E}{{\mathbb E}}
\newcommand{\B}{{\mathcal B}}
\newcommand{\cG}{{\mathcal G}}
\newcommand{\cN}{{\mathcal N}}
\newcommand{\cL}{{\mathcal L}}
\newcommand{\cD}{{\mathcal D}}
\newcommand{\rmP}{{\rm P}}
\title
{A Strong Law of Large Numbers for Super-critical Branching Brownian Motion with Absorption}
\author{Oren Louidor\thanks{oren.louidor@gmail.com, saglietti.s@technion.ac.il.} \\ Technion, Israel \and Santiago Saglietti\footnotemark[1] \\Technion, Israel}
\date{} 
\begin{document}
\maketitle

\begin{abstract} We consider a (one-dimensional) branching Brownian motion process with a general offspring distribution having at least two moments, and in which all particles have a drift towards the origin where they are immediately absorbed. It is well-known that the population survives forever with positive probability if and only if the branching rate is sufficiently large. We show that throughout this super-critical regime, the number of particles inside any fixed set normalized by the mean population size converges to an explicit limit, almost surely and in $L^1$. As a consequence, we get that almost surely on the event of eternal survival, the empirical distribution of particles converges weakly to the (minimal) quasi-stationary distribution associated with the Markovian motion driving the particles. This proves a result of Kesten in~\cite{kesten1978} from 1978, for which no proof was available until now. 
\end{abstract} 

\section{Introduction and Results}
Given some fixed $c > 0$, let $X=(X_t)_{t \geq 0}$ be a Brownian motion with drift $-c$ and variance coefficient $1$, which is absorbed upon reaching the origin, i.e. $X$ is the process given by 
\begin{equation} \label{eq:defx}
X_t := X_0 - c (t \wedge H_0) + W_{t \wedge H_0}
\end{equation} for each $t$, where $W$ is a standard Brownian motion on $\R$ and $H_0:=\inf \{ s \geq 0 : X_0 - cs + W_s = 0 \}.$  Now, consider the following branching dynamics associated with $X$:
	\begin{enumerate}[wide, labelindent=0pt]
		\item [i.] The dynamics starts with a single particle, located initially at some $x \geq 0$, whose position evolves randomly according to $X$. 
		\item [ii.] This initial particle branches at a fixed rate $r > 0$ (independently of the motion it describes) and, whenever it does so, it dies and gets replaced at its current position by an independent random number of particles $m$ having some fixed distribution $\mu$ on $\N_0$.
		\item [iii.] Starting from their birth position, now each of these $m$ new particles independently mimics the same stochastic behavior of its parent.
		\item [iv.] If a particle has $0$ children, then it simply dies and disappears from the dynamics.
	\end{enumerate}
We will call this the $(c,r,\mu)$-\textit{branching dynamics} associated with $X$ (or simply $(c,r,\mu)$-dynamics).

\medskip
Let us agree on the following notation to be used throughout the sequel:
\begin{itemize}[wide, labelindent=0pt]
	\item For each $t \geq 0$ we denote by $A_t$ the collection of all particles present in the dynamics at time $t$. 
	\item For any particle $u \in A_t$ and $0 \leq s \leq t$ we let $u_s$ be the position of the unique ancestor of $u$ (including $u$ itself) which belongs to $A_s$. Furthermore, 
	we will write $\overline{u}_t:=(u_s)_{s \in [0,t]}$ to denote its trajectory in the time interval $[0,t]$.
	\item We will write $\B_{(0,+\infty)}$ for the class of all Borel subsets of $(0,+\infty)$ and, for any given $t \geq 0$ and $B \in \B_{(0,+\infty)}$, use $N_t(B)$ to denote the sub-collection of particles in $A_t$ which lie inside $B$. 
	Also, to simplify the notation, in the sequel we will write $N_t$ instead of $N_t((0,+\infty))$. Observe that $A_t \setminus N_t$ contains exactly those particles which are alive at time $t$, but have already been absorbed.

	\item $|N_t(B)|$ shall indicate the cardinality of $N_t(B)$ (and, analogously, $|N_t|$ that of $N_t$).
	\item We will use the superscript $x$, e.g. $X^{(x)}_t$ or $N^{(x)}_t$, to indicate that the corresponding process starts at position $x$. Similarly, we shall use the subscript $x$, e.g. $P_x$ or $\E_x$, to indicate that the process involved in the corresponding probability or expectation starts at $x$.
	\item $\mu_1:=\E(m)$ and $\mu_2:= \E(m^2)$ will respectively denote the first and second moments of $\mu$.
\end{itemize}

Assuming $\mu_1 < \infty$ it is not difficult to show (see Lemma~\ref{lemma:asymp}) that for any $x \geq 0$ as $t \to \infty$,
\begin{equation} \label{eq:sc2} 
\E_x(|N_t|) \sim \frac{2 x e^{cx}}{\sqrt{\pi}c^2}\, t^{-\frac{3}{2}}e^{(r(\mu_1-1)-\frac{c^2}{2})t}  \,,
\end{equation} 
where $a(t) \sim b(t)$ means that $a(t)/b(t) \to 1$, which suggests that the positivity of the exponent coefficient $r(\mu_1-1)-c^2/2$ governs the possibility of survival for the $(c, r, \mu)$-dynamics. Indeed, by Markov's inequality, if $r(\mu_1-1)-c^2/2 \leq 0$ then \eqref{eq:sc2} above implies that the 
process must die out eventually with probability $1$. The other regime was first addressed by Kesten in his paper~\cite{kesten1978} from 1978:
	\begin{theorem*}[Kesten] If $\mu_1 < +\infty$ and $r(\mu_1-1)> \frac{c^2}{2}$ then the $(c,r,\mu)$-branching dynamics is super-critical, i.e. for all $x > 0$,
		\begin{equation} \label{eq:sc1}
		P_x( N_t > 0 \text{ for all }t \geq 0) > 0 \,.
		\end{equation}
	Moreover, if $\mu_2 < +\infty$ then there exists a random variable $D^{(x)}_\infty$ satisfying 
	\begin{equation}
	\label{eq:positivity}
	P_x( D_\infty > 0 \,|\, N_t > 0 \text{ for all }t \geq 0) = 1 \,,
	\end{equation}
	such that with probability $1$, simultaneously for all intervals $I \subseteq (0,+\infty)$ (including semi-infinite ones),
		\begin{equation} \label{eq:growth}
		\lim_{t \rightarrow +\infty} \frac{|N_t^{(x)}(I)|}{\E_x(|N_t(I)|)}=  D^{(x)}_\infty \,.
		\end{equation}
\end{theorem*}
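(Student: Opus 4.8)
\emph{Plan.} A one-line computation gives $\tfrac12 h''-ch'=-\tfrac{c^2}{2}h$ for $h(x):=xe^{cx}$, so that $h$ is (a multiple of) the ground state of the generator of the motion killed at the origin. With $\lambda:=r(\mu_1-1)-\tfrac{c^2}{2}>0$ the many-to-one lemma then makes $M_t^{(x)}:=e^{-\lambda t}\sum_{u\in N_t^{(x)}}u_te^{cu_t}$ a non-negative $(\F_t)$-martingale, which converges $P_x$-a.s. to some $M_\infty^{(x)}\ge 0$; we set $D_\infty^{(x)}:=M_\infty^{(x)}/(xe^{cx})$ and prove the statement in three steps.

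\emph{Step 1 (truncated systems, and super-criticality).} For large $K$ let $\widehat N^{K}$ be the dynamics in which particles are also killed upon reaching $K$. On the bounded domain $(0,K)$ the operator $\tfrac12\partial_x^2-c\partial_x+r(\mu_1-1)$ with Dirichlet conditions has a simple principal eigenvalue $\lambda_K$ and positive eigenfunction $\phi_K$, and $\lambda_K\uparrow\lambda$ as $K\to\infty$, so $\lambda_K>0$ once $K$ is large. Then $M_t^{K}:=e^{-\lambda_K t}\sum_{u\in\widehat N_t^{K}}\phi_K(u_t)$ is a non-negative martingale, and — this is where $\mu_2<\infty$ enters — a many-to-two computation gives $\sup_t\E_x[(M_t^{K})^2]<\infty$: the killing at $K$ tempers the $e^{2cx}$-growth of $\phi_K^2$, while $\lambda_K>0$ makes the resulting time integral converge. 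Hence $M_t^{K}\to M_\infty^{K}$ in $L^2$. A fixed-point/zero-one argument identifies $\{M_\infty^{K}=0\}$ with extinction of $\widehat N^{K}$, and a further second-moment estimate on $|\widehat N_t^{K}(I)|$ upgrades this to the strong law $|\widehat N_t^{K}(I)|/\E_x(|\widehat N_t^{K}(I)|)\to M_\infty^{K}/\phi_K(x)$, almost surely and in $L^1$, simultaneously for all $I\subseteq(0,K)$. Since $\widehat N^{K}\subseteq N$ and $P_x(M_\infty^{K}>0)>0$ for such $K$, survival of $\widehat N^{K}$ forces survival of $N$, which is \eqref{eq:sc1}.

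\emph{Step 2 (passage to the full half-line, the main obstacle).} The tempting shortcut that $M_t^{(x)}$ is itself $L^2$-bounded fails: the same second-moment computation with no killing at $K$ produces the exponent $c^2-r(\mu_1-1)$, positive precisely when $\tfrac{c^2}{2}<r(\mu_1-1)<c^2$, so that $\E_x[(M_t^{(x)})^2]\to\infty$ in that sub-regime. One must instead use the truncated systems with a \emph{growing} barrier $K=K(t)\to\infty$ tuned to the spatial scale $\asymp\sqrt t$ explored by surviving lineages: the choice $K(t)^2\asymp t/\log t$ makes $t^{3/2}e^{-(\lambda-\lambda_{K(t)})t}$ of order one, which is exactly what reconciles the pure-exponential growth $e^{\lambda_{K}t}$ of the truncated means with the $t^{-3/2}e^{\lambda t}$ of the true means — the polynomial factor in \eqref{eq:sc2} being the contribution of the bottom of the half-line spectrum. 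One then sandwiches $|\widehat N_t^{K(t)}(I)|\le|N_t(I)|\le|\widehat N_t^{K(t)}(I)|+(\text{contribution of lineages that ever exceed }K(t))$, bounds the error term by a many-to-one estimate showing its expectation is $o(\E_x(|N_t(I)|))$ and converts this into an almost-sure bound via Borel--Cantelli along a discrete sequence of times together with monotonicity in $t$ on the intervening intervals, and controls the main term by the Step-1 second-moment estimates made uniform in $K$. This yields the almost-sure convergence of $|N_t^{(x)}(I)|/\E_x(|N_t(I)|)$, together with uniform integrability.

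\emph{Step 3 (identification of the limit, and the simultaneous statement).} Writing $R_t^{I}:=|N_t^{(x)}(I)|/\E_x(|N_t(I)|)$, the branching property combined with the many-to-one asymptotics $\E_y(|N_s(I)|)\sim c_I\,ye^{cy}\,s^{-3/2}e^{\lambda s}$ (Lemma~\ref{lemma:asymp}, valid for each fixed $y$), summed over the finitely many particles of $N_t$, gives $\lim_{s\to\infty}\E_x[R_{t+s}^{I}\mid\F_t]=M_t^{(x)}/(xe^{cx})$, a limit that does not depend on $I$. Together with the $L^1$-convergence from Step 2, letting first $s\to\infty$ and then $t\to\infty$ yields $R_t^{I}\to M_\infty^{(x)}/(xe^{cx})=D_\infty^{(x)}$ for every $I$; in particular the limit is $I$-independent and $\E_x(M_\infty^{(x)})=xe^{cx}$. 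A fixed-point argument at the first branching event shows that $P_x(M_\infty^{(x)}=0)$ solves the same equation as the extinction probability and, since $\E_x M_\infty^{(x)}>0$, equals it; this is \eqref{eq:positivity}. Finally, \eqref{eq:growth} \emph{simultaneously} over all intervals follows by proving the convergence first for the countable family of intervals with rational endpoints and then extending to arbitrary intervals — including semi-infinite ones via $|N_t(I)|=|N_t|-|N_t((0,+\infty)\setminus I)|$ — using the monotonicity of $I\mapsto|N_t(I)|$ and the continuity in the endpoints of the limiting profile (proportional to $ye^{-cy}$, the quasi-stationary density).
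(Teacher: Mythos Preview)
Your overall architecture --- truncate to recover $L^2$-boundedness, run a second-moment argument on the truncation, remove the truncation, prove a.s.\ convergence along a discrete time sequence via Borel--Cantelli, and interpolate --- is exactly the paper's. Step~3, the identification of the limit via conditional expectations and the ``simultaneous'' extension through rational endpoints and monotonicity, is also essentially what the paper does.

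The genuine gap is in Step~2, and it lies in the \emph{shape} of the truncation. You kill particles at a spatially fixed level $K=K(t)$ and choose $K(t)^2\asymp t/\log t$ so that $t^{3/2}e^{-(\lambda-\lambda_{K(t)})t}$ is of order one. But this choice makes your error term --- the lineages that have ever exceeded $K(t)$ by time $t$ --- the \emph{dominant} term, not a negligible one. Under the $h$-transform the single-particle motion is a three-dimensional Bessel process, so a surviving lineage at time $s$ is typically at height of order $\sqrt{s}$; with $K(t)\asymp\sqrt{t/\log t}\ll\sqrt{t}$, conditionally on $X_t\in I$ the path almost certainly rises above $K(t)$ somewhere in $[0,t]$. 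Hence by many-to-one the ratio $\E_x(\text{error})/\E_x(|N_t(I)|)=P_x\bigl(\sup_{s\le t}X_s\ge K(t)\,\big|\,X_t\in I\bigr)$ tends to $1$, not $0$, and your sandwich collapses. Going the other way and taking $K(t)\gg\sqrt{t}$ does make the error negligible, but then your Step-1 $L^2$-bound, which depends on $K$ through the supremum of $\phi_K$ on $(0,K)$ (i.e.\ through $e^{cK}$), blows up; ``made uniform in $K$'' cannot hold precisely in the regime $r(\mu_1-1)\in(c^2/2,c^2]$ where the untruncated martingale fails to be $L^2$-bounded and where truncation is needed at all.

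The paper resolves this tension not with a flat barrier but with a \emph{time-dependent curved} one: a particle is removed once its trajectory exceeds $s\mapsto M(1+s^{3/4})$. The exponent $3/4$ is larger than $1/2$, so typical lineages (at height $\asymp\sqrt{s}$) stay below the barrier and the $L^1$-cost of truncation is at most $e^{-CM^2}$ uniformly in $t$ (Propositions~\ref{prop:d1} and~\ref{prop:n1}); yet it is strictly less than $1$, so $h\bigl(M(1+s^{3/4})\bigr)e^{-(r(\mu_1-1)-\lambda)s}$ is integrable in $s$, which is exactly what makes the truncated second moment finite (Propositions~\ref{prop:d2} and~\ref{prop:n2}). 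No single flat level can meet both requirements simultaneously, and this curved-barrier idea is the missing ingredient in your Step~2.
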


Since the goal in~\cite{kesten1978} was to study the critical case: $r(\mu_1-1) = c^2/2$, the author provides no proof for the above, arguing that ``so far he had an ugly and complicated proof''. While the first part of the theorem, namely assertion~\eqref{eq:sc1}, is well-known by now (see, for example, Theorem 11 in~\cite{harris2006v1}), a proof for the remaining part has never been produced. The aim of the present work is, therefore, to provide the missing proof for the second part of the theorem.

\subsection{Main results}
We shall, in fact, prove a slightly stronger version of the theorem. To this end, let $(\rmP_t)_{t \geq 0}$ denote the semigroup associated with $X$, defined as
$$
\rmP_t(f)(x)=\E_x(f(X_t))
$$ for any nonnegative measurable function $f: \R_{\geq 0} \rightarrow \R_{\geq 0}$. 
It is well-known that (see, e.g., \cite{polak2012}), if we set $-\lambda:=-\frac{c^2}{2} < 0$, for each $t>0$ one has that $e^{-\lambda t}$ is an eigenvalue of $\rmP_t$ with corresponding right eigenfunction $h$ and left eigenmeasure $\nu$ given by (up to constant multiples):
\begin{equation} \label{eq:hnu}
h(x):= \frac{1}{\sqrt{2\pi \lambda^2}} xe^{cx} \quad \text{ and } \qquad d\nu(x):=2\lambda x e^{-cx}\mathbbm{1}_{(0,+\infty)}(x) dx \,.
\end{equation} That is, for every $t > 0$ and all nonnegative measurable $f:\R_{\geq 0}\rightarrow \R_{\geq 0}$,
$$
\rmP_t(h)=e^{-\lambda t}h \hspace{1cm}\text{ and }\hspace{1cm}\int \rmP_t(f)d\nu=e^{-\lambda t} \int fd\nu.
$$ 
Moreover, the eigenvectors $h$ and $\nu$ satisfy for all $x > 0$ and $B \in \B_{(0,+\infty)}$,
\begin{equation}
\label{eq:asymp}
h(x)=\lim_{t \rightarrow +\infty} t^{\frac{3}{2}}e^{\lambda t}P_x(X_t > 0) 
\quad \text{and} \quad
\lim_{t \rightarrow +\infty} P_x( X_t \in B | X_t > 0) = \nu(B)\,,
\end{equation} 
and are sometimes known as the {\em ground state} and {\em (minimal) quasi-stationary distribution} for $X$, respectively.

Now since $\rmP_t(h)=e^{-\lambda t}h$, the process $M^{(x)}:=(M_t^{(x)})_{t \geq 0}$, given for all $x > 0$ by
\begin{equation}
\label{eq:MDef}
M^{(x)}_t:=\frac{h(X_t^{(x)})}{h(x)}e^{\lambda t} \,,
\end{equation}
is a mean-one martingale with respect to $(\F^{(x)}_t)_{t \geq 0}$ - the natural filtration of $X^{(x)}$. By a standard application of the many-to-one lemma (see Lemma~\ref{lema:mt1} below), the same holds for the process $D^{(x)}:=(D_t^{(x)})_{t \geq 0}$, defined via
\begin{equation} \label{def:d}
D^{(x)}_t:= \frac{1}{h(x)}\sum_{u \in N_t^{(x)}} h(u_t)e^{-(r(\mu_1-1)-\lambda)t} = e^{-r(\mu_1-1)t} \sum_{u \in A^{(x)}_t} M^{(x)}_t(u),
\end{equation} where $M^{(x)}_s(u):=\frac{h(u_s)}{h(x)}e^{\lambda s}$ for $s \geq 0$. The process $D^{(x)}$ is called the \textit{additive martingale} associated with the $(c,r,\mu)$-branching dynamics. 

Being a nonnegative martingale, $D^{(x)}$ has an almost sure limit, which we shall denote by $D^{(x)}_\infty$. Our first result asserts that the convergence also holds in $L^1$ and that the limit $D^{(x)}_\infty$ is almost surely positive in the event of survival. Since in the sequel $D^{(x)}_\infty$ will play the same role as it did in Kesten's Theorem, this result corresponds to~\eqref{eq:positivity}.

\begin{proposition}\label{theo:main2} 
Assume that $r(\mu_1 - 1) > c^2/2$ and $\mu_2 < +\infty$. Then for all $x > 0$ we have,
	\begin{equation} \label{eq:d1}
	D_t^{(x)} \overset{L^1}{\underset{a.s.}{\longrightarrow}} D_\infty^{(x)}.
	\end{equation} Furthermore, $D^{(x)}_\infty$ is strictly positive almost surely in the event of survival, i.e. 
	\begin{equation}
	\label{eq:d2}
	P_x( D_\infty > 0 \,|\, |N_t| > 0 \text{ for all }t \geq 0) = 1.
	\end{equation}
\end{proposition}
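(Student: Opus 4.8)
\emph{Overview of the strategy.} I would treat the two assertions separately. For the $L^1$-convergence~\eqref{eq:d1} I would use the spine (size-biased) change of measure together with the Lyons--Peres--Pemantle uniform-integrability criterion; for the positivity~\eqref{eq:d2} I would exploit the fixed-point equation satisfied by $y\mapsto P_y(D_\infty>0)$ and its concavity. Both parts rely on the many-to-one lemma (Lemma~\ref{lema:mt1}) and on the asymptotics of Lemma~\ref{lemma:asymp}.

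\emph{The $L^1$-convergence.} Since $D^{(x)}$ is a nonnegative mean-one martingale, \eqref{eq:d1} is equivalent to uniform integrability, i.e.\ to $\E_x(D_\infty^{(x)})=1$. I would introduce the size-biased measure $\tilde P_x$, determined on $\F_t$ by $\mathrm{d}\tilde P_x=D_t^{(x)}\,\mathrm{d}P_x$, and use its standard spine description: under $\tilde P_x$ there is a distinguished line of descent, the spine $\xi$, whose motion is the Doob $h$-transform of $X$ --- and since the drift $-c$ and the logarithmic derivative $h'/h=\tfrac1x+c$ cancel exactly, $\xi$ is a Bessel$(3)$ process started at $x$. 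The spine branches at the accelerated rate $r\mu_1$, at each branching produces a size-biased number $m_i$ of children (with finite mean $\tilde\E_x(m_i)=\mu_2/\mu_1$, using $\mu_2<\infty$), continues along a uniformly chosen child, and throws off independent copies of the original $(c,r,\mu)$-dynamics. By the Lyons--Peres--Pemantle criterion, \eqref{eq:d1} follows once one shows $D_\infty^{(x)}<\infty$ $\tilde P_x$-almost surely. Letting $\mathcal{G}_\infty$ denote the $\sigma$-algebra generated by the spine (its trajectory, its branching times $\tau_1<\tau_2<\cdots$, and the offspring numbers $m_i$), the spine decomposition of $D_t^{(x)}$, the fact that the spine's own contribution $e^{-r(\mu_1-1)t}M_t^{(x)}(\xi)=\tfrac{h(\xi_t)}{h(x)}e^{(\lambda-r(\mu_1-1))t}$ tends to $0$ (because $\xi_t=o(t)$ a.s.\ for a Bessel$(3)$ process, so $h(\xi_t)=e^{o(t)}$ is beaten by the decaying exponential $e^{(\lambda-r(\mu_1-1))t}$), and conditional Fatou together give
\[
\tilde\E_x\big(D_\infty^{(x)}\,\big|\,\mathcal{G}_\infty\big)\ \le\ \sum_{i\ge 1}(m_i-1)\,\frac{h(\xi_{\tau_i})}{h(x)}\,e^{-(r(\mu_1-1)-\lambda)\tau_i}.
\]
Fixing $\delta:=\tfrac12\big(r(\mu_1-1)-\lambda\big)>0$, the bound $\xi_t=o(t)$ again yields $h(\xi_{\tau_i})\,e^{-(r(\mu_1-1)-\lambda)\tau_i}\le e^{-\delta\tau_i}$ for all large $i$, so the series above is, up to finitely many terms, at most $\tfrac1{h(x)}\sum_i m_i e^{-\delta\tau_i}$, which has finite $\tilde P_x$-expectation since $(m_i)$ is independent of $(\tau_i)$, $\tilde\E_x(m_i)<\infty$, and $\tilde\E_x(e^{-\delta\tau_i})$ decays geometrically (as $\tau_i$ grows linearly). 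Hence that series converges $\tilde P_x$-a.s., so $D_\infty^{(x)}<\infty$ $\tilde P_x$-a.s., and \eqref{eq:d1} follows. I would stress that a plain $L^2$ bound is unavailable in the whole regime: a computation in the spirit of Lemma~\ref{lemma:asymp} shows $\E_x\big((M_t^{(x)})^2\big)$ grows like $t^2e^{2\lambda t}$, whence $\sup_t\E_x\big((D_t^{(x)})^2\big)<\infty$ only when $r(\mu_1-1)>2\lambda$ --- the large value of $\E_x((M_t^{(x)})^2)$ being produced entirely by the atypical excursions $\xi_t\approx ct$, which the Bessel$(3)$ spine a.s.\ does not perform, which is precisely why one argues almost-sure (rather than $L^1$) finiteness of the spine series.

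\emph{Positivity on survival.} Set $\psi(y):=P_y(D_\infty>0)$ and $p(y):=P_y(|N_t|>0\text{ for all }t)$. Conditioning on the first branching event --- at an independent $\mathrm{Exp}(r)$-time $T$, and noting that $\{H_0\le T\}$ forces $D_\infty=0$ and extinction --- and using the branching property together with the identity $D_\infty^{(y)}=\tfrac{h(X_T)}{h(y)}e^{-(r(\mu_1-1)-\lambda)T}\sum_{i=1}^{m}D_\infty^{(i)}$ (whose prefactor is a.s.\ strictly positive on $\{H_0>T\}$), one sees that both $\psi$ and $p$ are fixed points of the operator
\[
G(u)(y)\ :=\ \E_y\!\Big(\mathbbm{1}_{\{H_0>T\}}\big(1-g(1-u(X_T))\big)\Big),\qquad g(s):=\textstyle\sum_{k\ge 0}\mu(k)s^k .
\]
Moreover $\psi\le p$ (since $\{D_\infty>0\}\subseteq\{|N_t|>0\ \forall t\}$), $p>0$ on $(0,+\infty)$ by~\eqref{eq:sc1}, and --- crucially --- $\psi>0$ on $(0,+\infty)$ by the first part, because $\E_y(D_\infty)=1$ for every $y>0$. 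Now $G$ is monotone and strictly concave (the map $v\mapsto 1-g(1-v)$ is strictly concave on $(0,1)$, $g$ being the convex generating function of an offspring law with $\mu_1>1$), $G(0)=0$, and the kernel of $X$ killed at the origin is everywhere positive on $(0,+\infty)^2$. A standard comparison argument for monotone concave operators --- examining $t^\ast:=\sup\{t\ge 0:\ t\,p\le\psi\}$ and, should $t^\ast<1$, using strict concavity and irreducibility to deduce $\psi=G(\psi)\ge G(t^\ast p)>t^\ast G(p)=t^\ast p$ --- forces $\psi\equiv p$. Equivalently, $P_y(D_\infty>0\,|\,|N_t|>0\text{ for all }t)=\psi(y)/p(y)=1$ for every $y>0$, which is \eqref{eq:d2}.

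\emph{The main obstacle.} The heart of the argument is the almost-sure finiteness of the spine series in the $L^1$ step. Because the ground state $h(x)\propto xe^{cx}$ is unbounded, first- and second-moment estimates diverge throughout the sub-regime $\lambda<r(\mu_1-1)\le 2\lambda$, so one genuinely needs the sub-linear growth $\xi_t=o(t)$ of the Bessel$(3)$ spine, together with $\mu_2<\infty$, to control the offspring contributions; setting up the spine change of measure and this decomposition rigorously in the presence of absorption at the origin is where the real work lies. The positivity step is, by contrast, a fairly soft consequence once the $L^1$-convergence --- hence $\E_y(D_\infty)=1$ for all $y>0$ --- is available, modulo the (standard) uniqueness of the strictly positive fixed point of $G$.
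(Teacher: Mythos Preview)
Your approach differs from the paper's on both halves of the proposition; the $L^1$ part is correct by a different route, while the positivity argument has a real gap.

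\textbf{$L^1$-convergence.} Your spine/size-biasing argument is correct and is essentially the method the paper attributes to Kyprianou. The paper takes a different route: it proves uniform integrability by truncation, defining $\tilde D^{(x),M}_t$ by discarding every particle whose trajectory has exceeded the curve $s\mapsto M(1+s^{3/4})$ at some time $s\le t$, and then showing (Proposition~\ref{prop:d1}) $\sup_t\E_x|D_t-\tilde D^M_t|\le e^{-CM^2}$ and (Proposition~\ref{prop:d2}) $\sup_t\E_x(\tilde D^M_t)^2<\infty$. Both arguments ultimately rest on the same fact---under the $h$-transform the spine is Bessel$(3)$ and so stays far below any linear barrier---but the paper's truncation packaging is precisely the device that later drives the proof of Theorem~\ref{theo:main3}, whereas your spine decomposition, though it works under weaker moment hypotheses, does not obviously extend to control $|N_t(B)|/\E_x|N_t|$.

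\textbf{Positivity on survival.} Here your concavity/uniqueness argument is incomplete. The step ``set $t^\ast=\sup\{t\ge 0:\,tp\le\psi\}$ and, if $t^\ast<1$, use strict concavity to get $\psi=G(\psi)\ge G(t^\ast p)>t^\ast G(p)=t^\ast p$, contradicting the definition of $t^\ast$'' needs two ingredients you have not supplied on the non-compact state space $(0,+\infty)$: first, that $t^\ast>0$, i.e.\ $\inf_{y>0}\psi(y)/p(y)>0$ (both functions vanish as $y\downarrow 0$, and nothing you wrote controls the ratio); second, that the strict pointwise inequality $\psi(y)>t^\ast p(y)$ upgrades to $\psi\ge(t^\ast+\epsilon)p$ for some $\epsilon>0$. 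Either would follow from compactness or from a Harnack-type comparison of $\psi$ and $p$, but ``irreducibility'' alone does not deliver them. The paper bypasses this entirely: it works with $\sigma(y)=P_y(D_\infty=0)$, shows $\sigma$ is monotone decreasing by a coupling, proves $\sigma(\infty)=0$ by comparing with the branching dynamics \emph{without} absorption, and then combines the fixed-point relation $\sigma(x)=\E_x\big(\prod_{u\in N_n}\sigma(u_n)\big)$ with the (cited) fact that $\max_{u\in N_n}u_n\to+\infty$ almost surely on survival to conclude $\sigma(x)\le P_x(\text{extinction})$.
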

We remark that equation \eqref{eq:d1} was already proved in \cite[Theorem 13]{Kyp2004} under weaker moment assumptions on $\mu$, but using a different method (Theorem 13 in~\cite{Kyp2004} treats the case $c \leq 0$, but its proof can be used essentially verbatim to handle the case $c > 0$ as well). Moreover, results similar to \eqref{eq:d2} can be found already in \cite{harris2006v1}. We can now state the principal result of this manuscript.
\begin{theorem}
\label{theo:main3}
Assume that $r(\mu_1 - 1) > c^2/2$, $\mu_2 < +\infty$ and let $x > 0$. Then with probability $1$ simultaneously for all $B \in \B_{(0,+\infty)}$ with $\nu(\partial B) = 0$ we have,
		\begin{equation} \label{eq:convmain2}
		\lim_{t \rightarrow +\infty} \frac{|N^{(x)}_t(B)|}{\E_x(|N_t|)} = \nu(B) \cdot D_\infty^{(x)} \,.
		\end{equation} 
	 The above convergence also holds in $L^1$ for any fixed $B \in \B_{(0,+\infty)}$ (not  necessarily with $\nu(\partial B)=0$).  
\end{theorem}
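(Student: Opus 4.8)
\emph{Reduction.}
I would begin by reducing the almost-sure ``simultaneously for all $B$'' assertion to a countable family. Setting $\Xi^{(x)}_t:=\E_x(|N_t|)^{-1}\sum_{u\in N^{(x)}_t}\delta_{u_t}$, equation~\eqref{eq:convmain2} is the statement that the finite random measures $\Xi^{(x)}_t$ converge weakly to $D^{(x)}_\infty\,\nu$; since $\nu$ is deterministic and the total masses $\Xi^{(x)}_t\big((0,+\infty)\big)=|N^{(x)}_t|/\E_x(|N_t|)$ converge to $D^{(x)}_\infty$, a standard portmanteau argument shows it is enough to prove, almost surely and simultaneously, that $\Xi^{(x)}_t(I)\to D^{(x)}_\infty\,\nu(I)$ for every $I$ in the countable class of intervals with rational endpoints, including the semi-infinite ones and $(0,+\infty)$ itself. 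The $L^1$-convergence for a general Borel $B$ then follows by sandwiching $|N_t(B)|$ between $|N_t(F)|$ and $|N_t(G)|$ for $\nu$-continuity sets $F\subseteq B\subseteq G$ with $\nu(G\setminus F)$ small (using regularity of $\nu$, together with $\E_x(|N_t(B)|)/\E_x(|N_t|)\to\nu(B)$ for every Borel $B$, a refinement of Lemma~\ref{lemma:asymp}) and uniform integrability.

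\emph{The obstruction, and a barrier truncation.}
So fix an interval $B$ with $\nu(\partial B)=0$. In the range $\lambda<r(\mu_1-1)<2\lambda$ the martingale $D^{(x)}$ is bounded in $L^1$ but not in $L^2$ --- equivalently, $\E_x(|N_t(B)|^2)$ has strictly larger exponential order than $\E_x(|N_t(B)|)^2$ --- because both the bulk of $D^{(x)}_\infty$ and the fluctuations of $|N_t(B)|$ are produced by atypically high particles; a bare second-moment argument is therefore unavailable, and I would remove the offending particles by a linear barrier. Writing $a:=r(\mu_1-1)-\lambda$, fix a small $\gamma\in(0,a/c)$ and $K>x\vee\sup B$, call $u\in A_t$ \emph{$(\gamma,K)$-good} if $u_s\le\gamma s+K$ for all $s\le t$, and let $N^{\gamma,K}_t(B)$ and $D^{\gamma,K}_t:=h(x)^{-1}e^{-at}\sum_{u\in N^{\gamma,K}_t}h(u_t)$ be the corresponding barrier-killed count and (super-)martingale sum. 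The crux is that, for $\gamma$ small enough,
\begin{equation}\label{eq:star}
\sup_{t\ge0}\ \E_x\!\Big(\big(|N^{\gamma,K}_t(B)|\,/\,\E_x(|N_t|)\big)^{2}\Big)<\infty
\qquad\text{and}\qquad
\sup_{t\ge0}\ \E_x\!\big((D^{\gamma,K}_t)^2\big)<\infty .
\end{equation}
One proves~\eqref{eq:star} from the many-to-one formula for pairs: $\E_x(|N^{\gamma,K}_t(B)|^2)$ equals a diagonal term of order $\E_x(|N_t(B)|)$ plus $r\,\E(m(m-1))\int_0^t e^{r(\mu_1-1)v}\,\E_x[g_{t-v}(X_v)^2\,\mathbf 1_{0<X_v\le\gamma v+K}]\,dv$ with $g_\rho(z)=\E_z(|N_\rho(B)|)$; inserting $g_\rho(z)\le C\,h(z)\rho^{-3/2}e^{a\rho}$ (as in Lemma~\ref{lemma:asymp}) and the Gaussian lower-tail bound $\E_x[h(X_v)^2\,\mathbf 1_{0<X_v\le\gamma v+K}]\lesssim e^{2cx}v^2e^{-(c-\gamma)^2v/2}$ (valid since $\gamma<c$), the $v$-integral comes out $\lesssim e^{2cx}t^{-3}e^{2at}$, matching $\E_x(|N_t|)^2\asymp h(x)^2 t^{-3}e^{2at}$. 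The one-line moral is that the barrier suppresses exactly the pairs whose common ancestor splits late, after one lineage has climbed to height $\asymp c(t-v)$ --- the source of the $L^2$-blowup --- while leaving the first-order asymptotics untouched.

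\emph{Passage to the limit.}
Equipped with~\eqref{eq:star}, I would run the usual second-moment SLLN scheme for the truncated objects. Along a lattice $t_n=n\delta$, the uniform $L^2$-bound together with the conditional version of the pair formula gives $\E_x[(W^{\gamma,K}_n-\E(W^{\gamma,K}_n\mid\F_{m\delta}))^2]\le\varepsilon(m)$ for all large $n$, where $W^{\gamma,K}_n=|N^{\gamma,K}_{n\delta}(B)|/\E_x(|N_{n\delta}|)$ and $\varepsilon(m)\downarrow0$; meanwhile, letting $n\to\infty$ with $m$ fixed and using $|N^{(x)}_{m\delta}|<\infty$ a.s.\ and the single-particle asymptotics $P_z(X_\rho\in B)\sim\nu(B)h(z)\rho^{-3/2}e^{-\lambda\rho}$, one checks that $\E(W^{\gamma,K}_n\mid\F_{m\delta})$ and $\nu(B)\,\E(D^{\gamma,K}_{n\delta}\mid\F_{m\delta})$ share the same $n\to\infty$ limit: for each spine position $z$, both the renormalised count $\rho^{3/2}e^{\lambda\rho}\nu(B)^{-1}h(z)^{-1}P_z(X_\rho\in B,\,X \text{ below the barrier on }[0,\rho])$ and the renormalised mass $e^{\lambda\rho}h(z)^{-1}\E_z[h(X_\rho)\mathbf 1_{X_\rho>0,\,X\text{ below the barrier on }[0,\rho]}]$ tend, as $\rho\to\infty$, to the same quantity, namely the probability that the Bessel$(3)$-type $h$-transform of $X$ started at $z$ stays below the barrier for ever. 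This forces $W^{\gamma,K}_n-\nu(B)D^{\gamma,K}_{n\delta}\to0$ along the lattice, a.s.\ and in $L^1$, hence $|N^{\gamma,K}_t(B)|/\E_x(|N_t|)\to\nu(B)D^{\gamma,K}_\infty$ after interpolating between lattice times via $\nu(\partial B)=0$ and monotonicity in $t$ and in $B$.

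\emph{Removing the truncation.}
It remains to let $K\to\infty$. Since the spine underlying $D^{(x)}$ in the change-of-measure representation behind Proposition~\ref{theo:main2} is a Bessel$(3)$-type diffusion, hence $o(t)$, it eventually stays below $\gamma s+K$ once $K$ is large, so $D^{\gamma,K}_\infty\uparrow D^{(x)}_\infty$ almost surely; combining this with $|N^{\gamma,K}_t(B)|\le|N_t(B)|$ and with control --- via the $L^1$-boundedness of $D^{(x)}$ from Proposition~\ref{theo:main2} --- of the contribution of particles above the barrier (which vanishes in $L^1$ as $K\to\infty$, uniformly in $t$), one passes to the limit to obtain~\eqref{eq:convmain2} for the fixed $B$, a.s.\ and in $L^1$. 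Reassembling over the countable family of intervals yields the simultaneous almost-sure statement, and the $L^1$-claim is as in the first paragraph. I expect the main obstacle to be the barrier estimate~\eqref{eq:star} --- pinning down a slope $\gamma$ for which the truncation excises precisely the pathological pairs without perturbing the leading order --- and, secondarily, transporting the limit through the truncation cleanly as $K\to\infty$.
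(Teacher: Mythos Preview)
Your overall architecture---barrier truncation to restore square-integrability, a conditional second-moment scheme, then interpolation and removal of the truncation---is exactly the paper's. Two technical choices differ: the paper truncates at the sublinear curve $s\mapsto M(1+s^{3/4})$ rather than your linear barrier $\gamma s+K$, and it works along a sequence $(t_k)$ with \emph{vanishing} gaps $t_{k+1}-t_k\to0$ rather than a fixed lattice $n\delta$. Your linear barrier does give the needed $L^2$ bound (and your threshold $\gamma<a/c$ is the right one), so that part is a legitimate variant. The two remaining choices, however, are not cosmetic: they are precisely what makes the almost-sure statement go through, and your sketch has real gaps there.

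\textbf{Interpolation.} With a fixed step $\delta$, the phrase ``monotonicity in $t$ and in $B$'' does not do the job: $|N_t(B)|$ is not monotone in $t$, and enlarging/shrinking $B$ does not control what happens between lattice times. The paper's argument (Section~\ref{ss:full}) explicitly uses that $\Delta_k\to0$: it bounds $\sup_{s\in[t_k,t_{k+1})}|N_s(I_a)|-|N_{t_k}(I_a)|$ via the branching decomposition at time $t_k$, and the error terms---e.g.\ $\sup_y\E_y(\cN_{\Delta_k})-1$ and $P_{a'}(\cN_{\Delta_k}(\{a\})\neq0)$---only vanish because $\Delta_k\downarrow0$. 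With $\Delta_k\equiv\delta$ these stay bounded away from zero and the interpolation fails.

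\textbf{Removing the truncation almost surely.} For fixed $K$ your scheme gives $\liminf_t |N_t(B)|/\E_x|N_t|\ge\nu(B)D^{\gamma,K}_\infty$ and hence, letting $K\to\infty$, the correct lower bound. But your upper bound relies on ``the contribution of particles above the barrier vanishes in $L^1$ as $K\to\infty$, uniformly in $t$'', and $L^1$ control alone does not yield an a.s.\ bound on $\limsup_t\big(|N_t(B)|-|N^{\gamma,K}_t(B)|\big)/\E_x|N_t|$ for fixed $K$. The paper resolves this by letting the truncation level \emph{grow with the time index} ($M_k=\delta\log k$), so that the $L^1$ error $e^{-CM_k^2}$ is summable in $k$ and Borel--Cantelli applies (Proposition~\ref{prop:conv1}). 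If you keep $K$ fixed and then send $K\to\infty$, you need a separate almost-sure argument for the limsup that you have not supplied.

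A smaller point: your ``same $n\to\infty$ limit'' step requires the factorization $P_z(X_\rho\in B,\ \text{below barrier on }[0,\rho])\sim\nu(B)\,h(z)\rho^{-3/2}e^{-\lambda\rho}\,P^h_z(\text{below forever})$, which is plausible (Bessel$(3)$ mixing) but is an extra lemma. The paper sidesteps this by only imposing the barrier on $[0,s]$ in the ``conditional mean'' piece (the process $\tilde N^{[M,s]}$ in Section~\ref{sec:conc}), so that the conditional expectation factors exactly via the many-to-one lemma and Lemma~\ref{lemma:asymp}, with no asymptotic independence needed.
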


Proposition~\ref{theo:main2} and Theorem~\ref{theo:main3} admit three immediate corollaries. First, observe that \eqref{eq:d1} implies that $\E_x(D_\infty)=1$ and hence that $P_x(D_\infty > 0) > 0$. Since necessarily $D_\infty \equiv 0$ whenever $N^{(x)}_t$ dies out, we must have:
\begin{corollary}
For all $x > 0$ we have $P_x( N_t > 0 \text{ for all }t \geq 0)>0$.
\end{corollary}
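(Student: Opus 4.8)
The plan is to derive the corollary directly from the two previously stated results, without any additional probabilistic construction. The key inputs are: (i) Proposition~\ref{theo:main2}, which asserts that $D_t^{(x)} \to D_\infty^{(x)}$ in $L^1$; and (ii) the elementary observation that the additive martingale $D^{(x)}$ vanishes identically on the complement of the survival event $S_x := \{|N_t^{(x)}| > 0 \text{ for all } t \geq 0\}$.

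First I would note that $D^{(x)} = (D_t^{(x)})_{t\geq 0}$ is a mean-one martingale, so $\E_x(D_t^{(x)}) = 1$ for every $t \geq 0$. The $L^1$-convergence in~\eqref{eq:d1} then forces $\E_x(D_\infty^{(x)}) = \lim_{t\to\infty}\E_x(D_t^{(x)}) = 1$; in particular $D_\infty^{(x)}$ is not almost surely zero, so $P_x(D_\infty^{(x)} > 0) > 0$. Second, I would argue that on the extinction event $S_x^c$ the sum $\sum_{u\in N_t^{(x)}} h(u_t)$ defining $D_t^{(x)}$ in~\eqref{def:d} is eventually empty (once $|N_t^{(x)}|$ hits $0$ it stays $0$, since the absorbed/dead particles contribute nothing), hence $D_t^{(x)} = 0$ for all large $t$ on $S_x^c$, and therefore $D_\infty^{(x)} = 0$ on $S_x^c$. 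Combining the two facts: $\{D_\infty^{(x)} > 0\} \subseteq S_x$ up to a null set, so $P_x(S_x) \geq P_x(D_\infty^{(x)} > 0) > 0$, which is exactly the claimed positivity of the survival probability.

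Since all the substantive work has already been carried out in establishing Proposition~\ref{theo:main2}, there is no real obstacle here; the only point requiring a line of care is the containment $\{D_\infty^{(x)}>0\}\subseteq S_x$, i.e. making precise that extinction implies $D_t^{(x)}$ is eventually $0$. This is immediate from the definition of the branching dynamics (extinction means $A_t^{(x)}$ becomes empty, or at least $N_t^{(x)}$ does, and once empty it remains empty since particles cannot re-enter $(0,+\infty)$ after absorption and no new particles are created when all current ones are dead). I would state the corollary's proof in two or three sentences along these lines.
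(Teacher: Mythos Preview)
Your proposal is correct and follows exactly the same approach as the paper: the paper also argues that $L^1$-convergence in~\eqref{eq:d1} gives $\E_x(D_\infty)=1$, hence $P_x(D_\infty>0)>0$, and then notes that $D_\infty\equiv 0$ on the extinction event, forcing positive survival probability. Your write-up is in fact slightly more detailed than the paper's one-line justification, spelling out explicitly why $D_t^{(x)}$ is eventually zero on $S_x^c$.
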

\noindent This reproduces~\eqref{eq:sc1} in Kesten's Theorem.
Next, we use Theorem~\ref{theo:main3} and the fact that $\E_x(D_\infty)=1$
to conclude that $\E_x(|N_t(B)|) \sim \nu(B) \E_x(|N_t|)$ as $t \to \infty$. Plugging this back in~\eqref{eq:convmain2} then yields:
\begin{corollary}
Let $x > 0$. Then with probability $1$ simultaneously for all $B \in \B_{(0,+\infty)}$ with $\nu(\partial B) = 0$ and $\nu(B) \neq 0$ we have,
\begin{equation} \label{eq:convmain3}
\lim_{t \rightarrow +\infty} \frac{|N^{(x)}_t(B)|}{\E_x(|N_t(B)|)} = D_\infty^{(x)} \,,
\end{equation} 
The above convergence also holds in $L^1$ for any fixed $B \in \B_{(0,+\infty)}$ with $\nu(B)\neq 0$.
\end{corollary}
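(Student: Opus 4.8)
The plan is to deduce this corollary directly from Theorem~\ref{theo:main3} together with the $L^1$ convergence in Proposition~\ref{theo:main2}. The key intermediate fact is the asymptotic equivalence $\E_x(|N_t(B)|) \sim \nu(B)\,\E_x(|N_t|)$ as $t \to \infty$, for every fixed $B \in \B_{(0,+\infty)}$ with $\nu(B) \neq 0$. To establish it, I would start from the many-to-one identity (Lemma~\ref{lema:mt1}), which gives $\E_x(|N_t(B)|) = e^{r(\mu_1-1)t}\, P_x(X_t \in B)$ and likewise $\E_x(|N_t|) = e^{r(\mu_1-1)t}\, P_x(X_t > 0)$; hence $\E_x(|N_t(B)|)/\E_x(|N_t|) = P_x(X_t \in B \mid X_t > 0)$, which by the second statement in \eqref{eq:asymp} converges to $\nu(B)$. (Strictly, \eqref{eq:asymp} is stated without the $\nu(\partial B)=0$ caveat, so it applies to all Borel $B$; if one prefers to invoke the $L^1$ part of Theorem~\ref{theo:main3}, dividing \eqref{eq:convmain2} through in expectation and using $\E_x(D_\infty)=1$ gives the same conclusion for every fixed $B$.) Thus $\E_x(|N_t|) \sim \nu(B)^{-1}\,\E_x(|N_t(B)|)$.

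Now for the almost-sure part: fix a realization in the full-probability event on which \eqref{eq:convmain2} holds simultaneously for all $B$ with $\nu(\partial B)=0$. For any such $B$ with additionally $\nu(B) \neq 0$, write
\begin{equation*}
\frac{|N^{(x)}_t(B)|}{\E_x(|N_t(B)|)}
= \frac{|N^{(x)}_t(B)|}{\E_x(|N_t|)}\cdot\frac{\E_x(|N_t|)}{\E_x(|N_t(B)|)} \,.
\end{equation*}
The first factor tends to $\nu(B)\,D^{(x)}_\infty$ by \eqref{eq:convmain2}, and the second factor tends to $\nu(B)^{-1}$ by the asymptotic equivalence just established; multiplying, the product tends to $D^{(x)}_\infty$, which is \eqref{eq:convmain3}. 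Since the exceptional null set does not depend on $B$, this holds with probability $1$ simultaneously for all admissible $B$.

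For the $L^1$ part, fix $B$ with $\nu(B) \neq 0$ (not necessarily a continuity set). Theorem~\ref{theo:main3} gives $|N^{(x)}_t(B)|/\E_x(|N_t|) \to \nu(B)\,D^{(x)}_\infty$ in $L^1$. Dividing by the deterministic scalar $\E_x(|N_t(B)|)/\E_x(|N_t|)$, which converges to the nonzero constant $\nu(B)$, preserves $L^1$ convergence (one is multiplying an $L^1$-convergent sequence by a bounded deterministic sequence converging to $\nu(B)^{-1}$, and the limit is $\nu(B)^{-1}\cdot\nu(B)\,D^{(x)}_\infty = D^{(x)}_\infty$), yielding $|N^{(x)}_t(B)|/\E_x(|N_t(B)|) \to D^{(x)}_\infty$ in $L^1$. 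I do not anticipate a substantive obstacle here; the only point requiring a little care is making sure the asymptotic equivalence $\E_x(|N_t(B)|) \sim \nu(B)\,\E_x(|N_t|)$ is available for the possibly-non-continuity sets $B$ appearing in the $L^1$ statement — and this is exactly what the $L^1$ clause of Theorem~\ref{theo:main3} supplies, by taking expectations and using $\E_x(D_\infty) = 1$.
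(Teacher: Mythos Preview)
Your proposal is correct and follows essentially the same route as the paper: establish $\E_x(|N_t(B)|)\sim\nu(B)\,\E_x(|N_t|)$ (the paper does this exactly as in your second option, by taking expectations in the $L^1$ part of Theorem~\ref{theo:main3} and using $\E_x(D_\infty)=1$) and then feed this back into~\eqref{eq:convmain2}. Your write-up is a bit more detailed than the paper's one-line justification, but the argument is the same.
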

\noindent This is a slightly stronger version of~\eqref{eq:growth}.

Lastly, when the dynamics does not die out, we may define for all $t \geq 0$ the {\em empirical distribution} of particles $\nu^{(x)}_t$ via:
\begin{equation}
\nu^{(x)}_t(B):= \frac{|N_t^{(x)}(B)|}{|N_t^{(x)}|} \ \ , \quad  B \in \B_{(0,+\infty)} \,.
\end{equation}
Writing $\nu^{(x)}_t(B) = \big(|N^{(x)}(B)|/\E_x(|N_t|)\big) \cdot \big(\E_x(|N_t|)/|N_t^{(x)}|\big)$ and using~\eqref{eq:d1} and~\eqref{eq:convmain2}, this immediately gives:
\begin{corollary}
For all $x > 0$ we have $\nu^{(x)}_t \Longrightarrow \nu$ as $t \to \infty$ a.e. on $\{|N_t^{(x)}| > 0 \text{ for all }t \geq 0\}$. 
\end{corollary}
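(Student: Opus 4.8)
The plan is to deduce the $L^1$ statement from the almost sure one, and to obtain the latter by proving a strong law for a \emph{truncated} version of the population---for which a second moment method is available---and then removing the truncation with the help of Proposition~\ref{theo:main2}. For fixed $B$, since $|N^{(x)}_t(B)|/\E_x(|N_t|)\ge 0$ and, by the many-to-one lemma and \eqref{eq:asymp}, $\E_x(|N_t(B)|)/\E_x(|N_t|)=P_x(X_t\in B\mid X_t>0)\to\nu(B)=\E_x(\nu(B)D_\infty)$, once the almost sure limit is identified as $\nu(B)D^{(x)}_\infty$ Scheff\'e's lemma yields the $L^1$ convergence. For the almost sure part, set $\mathcal{E}_t:=\E_x(|N_t|)^{-1}\sum_{u\in N^{(x)}_t}\delta_{u_t}$; the assertion ``simultaneously for all $B$ with $\nu(\partial B)=0$'' is equivalent to the almost sure weak convergence $\mathcal{E}_t\Rightarrow D^{(x)}_\infty\nu$ on $(0,+\infty)$, which by separability reduces to (i)~$\mathcal{E}_t(f)\to D^{(x)}_\infty\langle f,\nu\rangle$ almost surely for each $f$ in a fixed countable family of nonnegative continuous compactly supported functions, and (ii)~total mass convergence $|N^{(x)}_t|/\E_x(|N_t|)\to D^{(x)}_\infty$, the latter also ruling out loss of mass at $0$ or at $+\infty$. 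The analytic input throughout is that, again by the many-to-one lemma and \eqref{eq:asymp}, $\E_y(|N_t(B)|)/\E_x(|N_t|)\to\nu(B)\,h(y)/h(x)$ uniformly for $y$ in compact subsets of $(0,+\infty)$.

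The obstruction to a direct second moment argument is that, unless $r(\mu_1-1)>c^2$, the martingale $D^{(x)}_t$ is only $L^1$-bounded: the many-to-two formula shows $\E_x((D^{(x)}_t)^2)$ stays bounded precisely when $r(\mu_1-1)>c^2$ and diverges in the mildly super-critical regime $c^2/2<r(\mu_1-1)\le c^2$. To get around this, fix $K>1$ and call a particle $u$ \emph{good at time $t$} if $h(u_s)\le Ke^{(r(\mu_1-1)-\lambda)s}$ for all $s\le t$ along its trajectory---i.e.\ if its path has stayed below the essentially linear barrier $y\approx\tfrac{r(\mu_1-1)-\lambda}{c}\,s+\tfrac{\log K}{c}$. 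Let $N^{K}_t(B)\subseteq N^{(x)}_t(B)$ consist of the good particles and put $D^{K}_t:=\tfrac{1}{h(x)}e^{-(r(\mu_1-1)-\lambda)t}\sum_{u\in N^{K}_t}h(u_t)$. Since $h(u_s)$ is capped along good lineages, in the many-to-two formulae for $D^{K}_t$ and for $|N^{K}_t(B)|$ the capping converts the otherwise divergent time integral into a convergent one---this is exactly where $\mu_2<\infty$ and the strict inequality $r(\mu_1-1)>c^2/2$ are used---so that $D^{K}_t$ is $L^2$-bounded and $\E_x(|N^{K}_t(B)|^2)\le C_K\,(\E_x(|N_t|))^2$. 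A standard second moment argument along a lattice (Chebyshev's inequality and Borel--Cantelli), together with the uniform first moment asymptotics, then gives for every bounded $B$ with $\nu(\partial B)=0$---and also for $B=(0,+\infty)$, good particles lying below the slowly growing barrier---
\begin{equation*}
\frac{|N^{K}_t(B)|}{\E_x(|N_t|)}\ \xrightarrow[t\to\infty]{\mathrm{a.s.}}\ \nu(B)\,D^{K}_\infty .
\end{equation*}
When $r(\mu_1-1)>c^2$ one simply takes $K=+\infty$ here.

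It remains to let $K\uparrow+\infty$. Then $D^{K}_t\uparrow D^{(x)}_t$ for each $t$, hence $D^{K}_\infty\uparrow$ some limit $\le D^{(x)}_\infty$; since $D^{(x)}_t\to D^{(x)}_\infty$ in $L^1$ by Proposition~\ref{theo:main2} and $\E_x(D^{K}_\infty)=1-\E_x(\text{$h$-mass carried by non-good lineages})$, with the latter expectation tending to $0$ as $K\to\infty$ (it equals the probability that the size-biased spine---a Bessel$(3)$ process in space---ever crosses the exponentially high barrier), one gets $D^{K}_\infty\uparrow D^{(x)}_\infty$ almost surely. As $|N^{(x)}_t(B)|\ge|N^{K}_t(B)|$ this already gives $\liminf_t |N^{(x)}_t(B)|/\E_x(|N_t|)\ge\nu(B)\,D^{(x)}_\infty$ for all $B$ with $\nu(\partial B)=0$, and with $B=(0,+\infty)$, $\liminf_t |N^{(x)}_t|/\E_x(|N_t|)\ge D^{(x)}_\infty$. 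For the converse, the non-good particles $N^{(x)}_t\setminus N^{K}_t$ are the descendants of the barrier-crossing events, which for $K$ large form an almost surely finite and (in $K$) exponentially rare family; each such crossing spawns a sub-population whose normalised size converges and whose total $h$-mass equals $D^{(x)}_t-D^{K}_t\to D^{(x)}_\infty-D^{K}_\infty$, whence $\limsup_t(|N^{(x)}_t|-|N^{K}_t|)/\E_x(|N_t|)$ is dominated by a random variable of expectation $1-\E_x(D^{K}_\infty)\to0$, proving (ii). Given (ii), the matching upper bound $\limsup_t\mathcal{E}_t(f)\le D^{(x)}_\infty\langle f,\nu\rangle$ for nonnegative continuous compactly supported $f\le 1$ follows by writing $\mathcal{E}_t(f)=\mathcal{E}_t((0,+\infty))-\mathcal{E}_t(1-f)$ and approximating $1-f$ from below by compactly supported functions (using that $\nu$ is a probability measure on $(0,+\infty)$), combined with the $\liminf$ bound already shown. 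Finally, these almost sure statements are first proved along a lattice and then extended to all $t\ge0$ by comparing, for $t\in[n\delta,(n+1)\delta]$, $|N^{(x)}_t(B)|$ with $|N^{(x)}_{n\delta}(B^{\pm\eta})|$ for slightly dilated and eroded sets $B^{\pm\eta}$, controlling by a first moment estimate the particles that move appreciably or branch within a window of length $\delta$, and letting $\delta\downarrow0$ and $\eta\downarrow0$---using the bound $|N^{(x)}_t|/\E_x(|N_t|)=O(1)$ just obtained and that $\E_x(|N_s|)$ varies by a bounded factor over such windows.

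I expect the main obstacle to be the mildly super-critical regime $c^2/2<r(\mu_1-1)\le c^2$, where the killing barrier must be calibrated so that \emph{simultaneously} the truncated second moments converge and the non-good particles become negligible as $K\to\infty$---not only in $h$-mass, which is controlled by the $L^1$-convergence of $D^{(x)}_t$ alone, but, more delicately, in \emph{number} relative to $\E_x(|N_t|)$. The latter does not follow from Markov's inequality and Borel--Cantelli and instead rests on barrier crossings forming an almost surely finite family whose frequency decays in $K$ and whose spawned sub-populations are themselves governed by the strong law being proved. A secondary technical point is to establish the uniform first moment asymptotics over the random, non-compact set of positions occupied by the conditioning generation.
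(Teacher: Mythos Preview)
The corollary itself is immediate from Theorem~\ref{theo:main3} and~\eqref{eq:d2}: write $\nu^{(x)}_t(B)=\big(|N^{(x)}_t(B)|/\E_x(|N_t|)\big)\cdot\big(\E_x(|N_t|)/|N^{(x)}_t|\big)$; by Theorem~\ref{theo:main3} the first factor tends to $\nu(B)D^{(x)}_\infty$ and (taking $B=(0,\infty)$) the second to $1/D^{(x)}_\infty$, almost surely and simultaneously for all $B$ with $\nu(\partial B)=0$. On $\{D^{(x)}_\infty>0\}$, which by~\eqref{eq:d2} coincides a.s.\ with the survival event, this gives $\nu^{(x)}_t(B)\to\nu(B)$ for all such $B$, i.e.\ weak convergence. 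What you have written is instead a sketch of Theorem~\ref{theo:main3}; the corollary needs no further argument.

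Comparing your sketch of Theorem~\ref{theo:main3} with the paper's proof, the architecture (truncate, second moment, remove truncation, interpolate from a lattice) is the same, but the execution differs and has gaps. First, your barrier $h(u_s)\le Ke^{(r(\mu_1-1)-\lambda)s}$ is \emph{linear} in space, $u_s\lesssim\tfrac{r(\mu_1-1)-\lambda}{c}s$, whereas the paper uses the sublinear barrier $s\mapsto M(1+s^{3/4})$. In the many-to-two computation (Propositions~\ref{prop:d2} and~\ref{prop:n2}) the controlling integral is $\int_0^\infty \E^h_x\!\big[h(X_s)\mathbbm{1}_{\{X_s\le L(s)\}}\big]\,e^{-(r(\mu_1-1)-\lambda)s}\,ds$; with the paper's crude bound $\E^h_x[h(X_s)\mathbbm{1}_{\{X_s\le L\}}]\le h(L)$ your barrier gives $\int_0^\infty K\,ds=\infty$. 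A sharper Laplace-type estimate exploiting that the Bessel$(3)$ spine sits near $\sqrt{s}\ll L(s)$ could make this converge, but you do not supply it, and without it ``the capping converts the divergent integral into a convergent one'' is unsubstantiated. The paper's sublinear barrier is chosen precisely so that the crude bound already suffices.

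Second, your passage from the truncated to the full strong law has a circularity you yourself flag: to bound $\limsup_t(|N^{(x)}_t|-|N^K_t|)/\E_x(|N_t|)$ you invoke the strong law for the sub-populations spawned at barrier crossings, which is the very statement being proved (those sub-populations are untruncated). Controlling their $h$-mass via $D^{(x)}_\infty-D^K_\infty$ is fine, but $h$-mass and \emph{count} normalised by $\E_x(|N_t|)$ are not comparable without the strong law. The paper avoids this entirely: it never needs an almost-sure bound on the excess, showing instead that the $L^1$ cost of truncation is at most $e^{-CM^2}$ uniformly in $t$ (Propositions~\ref{prop:d1},~\ref{prop:n1}), then proving $L^1$ convergence first and upgrading to a.s.\ via Borel--Cantelli along a sequence $(t_k)$ with $M_k\to\infty$ and a separate interpolation over the vanishing gaps (Section~\ref{sec:pf3}). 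Your proposed order (a.s.\ first, then Scheff\'e) is in principle legitimate, but the a.s.\ part, as written, does not close.
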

\noindent Combining the three corollaries above, we see that in the super-critical regime, there is a positive probability for survival, in which case $|N^{(x)}_t(B)|$ grows like its  expectation for all $B \in \B_{(0,+\infty)}$ with $\nu(\partial B) = 0$ and the empirical distribution $\nu^{(x)}_t$ converges to the quasi-stationary distribution $\nu$ associated with $X$.

\subsection{Motivation and related work}

The $(c,r,\mu)$-dynamics was first introduced by Kesten in his paper \cite{kesten1978} from 1978, arguing that ``it was originally thought that this would be useful for Dr. M. Bramson's thesis \cite{Bram77}, which obtains very precise results for the position of the particle furthest to the right if no absorption on $(-\infty,0)$ takes place''. Since then this model has been studied as a particular example belonging to the class of \textit{branching particle systems with selection}, which are usually of interest for their applications to genetics and population dynamics. 

To understand the latter connection, shift all particles by $+ct$ at time $t$ and then view each particle as an individual whose (shifted) position represents fitness or measure of adaptation to the environment. Then the motion, which is now a standard Brownian motion, represents fitness evolution by mutation, while absorption, which now takes place at a barrier moving at speed $c$, represents the selection effect of removing all individuals whose fitness is too low.

This model is also of importance for its link with the F-KPP equation: indeed, in \cite{harris2006v1} the authors show that the F-KPP travelling wave equation on $\R_{\geq 0}$
$$
\left\{\begin{array}{l}\frac{1}{2}f'' - cf'+r (f^2 - f)=0 \hspace{0.3cm}\text{ on }(0,\infty)\\
f(0^+)=1\\
f(\infty)=0,
\end{array}
\right.
$$ admits a solution if and only if the $(c,r,\mu)$-dynamics with $\mu = \delta_{2}$ (i.e. dyadic branching) survives with positive probability and furthermore that, in this case, the (unique) solution is given by $f(x) = P_x( N_t > 0 \text{ for all }t \geq 0)$.

Branching Brownian motion with drift and absorption is a particular instance of the more general class of {\em branching diffusions}, whereby the motion of particles is that of a general diffusion $X$ on some domain $\cD \subset \R^d$ with generator $\cL$ and with branching according to a fixed law $\mu$, occurring at a rate $r: \cD \to [0,\infty)$, which is allowed to depend on the position of the particle (in general, one may also have the branching law depend on the position of the particle). Such a process can also be viewed as a multi-type branching processes with a general (infinite) type-space. However, unlike in the case of a finite type-space, where the limiting behavior is fully understood~\cite{kesten1966limit}, here a general limit theory is so far restricted to various sub-classes of branching diffusions satisfying additional assumptions.

Notable among such general results are the works of Asmussen and Hering~\cite{asmussen1976,asmussen1977strong} and more recently that of Engl\"{a}nder, Harris and Kyprianou~\cite{englander2010} (which was motivated by earlier works on superprocesses~\cite{EngTur2002,englander2006law}, see also~\cite{englander2014spatial}).
In both cases, the additional assumptions imposed on the process come in the form of regularity and spectral properties of $\cL+r(\mu_1-1)$ -- the generator of the so-called expectation semi-group associated with the dynamics. While the spectral assumptions in~\cite{englander2010} are less restrictive, a key condition present in both lines of work is that the operator $\cL + r(\mu_1-1)-\lambda_c$ is {\em product-critical}, where $\lambda_c$ is the generalized principal eigenvalue of $\cL+r(\mu_1-1)$. Essentially, product-criticality means that the right and left eigenvectors $h$ and $\nu$ (both unique up to constant multiples) corresponding to $\lambda_c$ satisfy $\nu(h)<+\infty$ (see Chapter 4 in~\cite{pinsky1995positive} for further details).

The usefulness of this assumption comes from the fact that if $\cL +r(\mu_1-1)-\lambda_c$ is product-critical then the measure $\nu^h(dx):=h(x)\nu(dx)$ (normalized to satisfy $\nu^h(1)=1$) is, for any $x \in \mathcal{D}$, the unique stationary distribution for the process $X^{(x)}$ under the {\em h-transformed} measure $P^h_x$, defined via
\begin{equation} \label{eq:htransform}
\frac{dP^h_x}{dP}\Bigg|_{\F^{(x)}_t}= M^{(x)}_t \,,
\end{equation} 
with $M^{(x)}$ and $\F^{(x)}$ defined as in~\eqref{eq:MDef}.  By means of the many-to-one lemma (see Lemma \ref{lema:mt1} below), one can then obtain convergence statements for the branching diffusion using the ergodicity of the single-particle motion under the $h$-transform.  Unfortunately, our system is not product-critical as in our case we have  $\lambda_c = r(\mu_1-1) - \lambda$, with $h$ and $\nu$ given by \eqref{eq:hnu} and therefore $\nu(h) = \infty$ 
(alternatively, the $h$-transform of $X^{(x)}$ is a 3-dimensional Bessel process and hence does not admit a stationary distribution).

Limit theorems have been derived in other related models of branching dynamics, such as (the already mentioned) superprocesses (see also~\cite{Chen2015,ChenRenWang2008,LiuRenSong2013}), branching symmetric Hunt processes~\cite{chen2016, ChenShio2007} and general branching Markov processes (e.g. the first part of~\cite{asmussen1976}). In all these cases the presiding assumption is almost always (some form of) product-criticality (although there are exceptions, c.f.~\cite{englander2009law}). Beyond product-criticality and aside from a few ad-hoc examples (e.g.~\cite{watanabe1967}), the only general limit theory for branching diffusions is that in~\cite{JonckSag}. In this recent work, the authors apply second moment arguments to study the convergence in~\eqref{eq:convmain2}, albeit in $L^2$. They show that for a large class of branching diffusions the convergence in~\eqref{eq:convmain2} holds in $L^2$ if and only if the martingale $D^{(x)}$ is bounded in $L^2$. Now, although their theorem does apply in our case, it falls short of implying Kesten's Theorem, because $L^2$-boundedness of $D^{(x)}$ only holds when $r(\mu_1-1)> c^2$ which is more restrictive then our assumption $r(\mu-1)>c^2/2$ and because the convergence is in $L^2$ and hence in probability, but not almost surely.

There is an obvious connection between the problem at hand and the study of high values of ``regular'' branching Brownian motion (i.e., no drift or absorption and $x=0$). Indeed, without absorption, the empirical measures $|N^{(0)}_t(\cdot)|$ identifies with the point process of particles heights for the regular process, shifted by $-ct$ at time $t$. We therefore expect results analogous to those in Proposition~\ref{theo:main2} and Theorem~\ref{theo:main3} to hold in this case, albeit with $h$ and $\nu$ being the exponential function and exponential measure on all $\R$, respectively.
Moreover, the additive martingale will be defined in the same way (using the new $h$) and in~\eqref{eq:convmain2} the sets $B$ will be assumed to be bounded. Similar results, albeit in law, have been derived in the context of the closely related discrete Gaussian free field~\cite{biskup2016intermediate}. 

It is worth mentioning that the additive martingale bears close resemblance to the so-called {\em derivative martingale} introduced by Lalley and Sallke in~\cite{lalley1987conditional} to describe the limiting law for the centered maximum of regular branching Brownian motion.
This martingale is defined as in~\eqref{def:d} with $r(\mu_1-1)-c^2/2 = 0$ (corresponding to the critical case) albeit with a negative sign in front of the sum and, more importantly, without the absorption in the underlying process $N^{(0)}$. As such and unlike the critical additive martingale, the derivative martingale does converge to a non-trivial limit. More recently, it was shown in~\cite{ABBS2013,ABK_E} that the limit of the derivative martingale is also the random constant multiplying the intensity measure of the Cox process which describes the limiting extremal process (i.e. the point process which records all ``nearly maximal'' heights). Thus, in both cases, a similar martingale limit acts as an overall (random) scale factor for the limiting measure.

Lastly, although we focus here on the super-critical case for the $(c,r,\mu)$-dynamics, we note that not less attention is given in the literature to the critical and sub-critical regimes of this process. Results for these regimes include, to name a few, the study of the asymptotic decay of the survival probability as a function of time $t$, initial position $x$ and ``distance'' to criticality $r(\mu_1-1)-c^2/2$ (see, e.g.~\cite{harris2007survival,berestycki2011survival,berestycki2014critical}), the total number of born or absorbed particles (\cite{maillard2013number}, see also~\cite{aidekon2010tail,aidekon2013precise}) and scaling limits in the near-critical regime~\cite{berestycki2013genealogy}.

\subsection{A word about the proof}
Let us conclude this section with a brief overview of the proof of Theorem~\ref{theo:main3} as it demonstrates most of the key ideas in the proof of Proposition~\ref{theo:main2} as well. As in~\cite{JonckSag}, the proof is based on a second moment argument. Fixing $B \in \B_{(0,+\infty)}$, we wish to show that 
\begin{equation}
\label{eq:2ndMomBound}
\E_x (|N_t(B)|^2) \leq C \big(\E_x (|N_t|)\big)^2 \,,
\end{equation}
for some $C > 0$ and all $t \geq 0$ and $x > 0$. Once this is established, we can use the branching structure of the process and condition on $\F_s$ to express $|N^{(x)}_{t+s}(\cdot)|$ as a sum of (conditionally) independent random variables $|N^{(u)}_t(\cdot)|$, one for each $u \in N^{(x)}_s$. Taking expectation and using~\eqref{eq:2ndMomBound}, we can then get
\begin{equation}
\E_x \left( \frac{|N_{t+s}(B)| - \E_x \big(|N_{t+s}(B)|\,\big|\, \F_s \big)}
	{\E_x(|N_{t+s}|)} \right)^2 \leq 
C \frac{\E_x \Big(\sum_{u \in N^{(x)}_s} \big(\E_u (|N_t|)\big)^2 \Big)}
{\big(\E_x \sum_{u \in N^{(x)}_s} \E_u (|N_t|) \big)^2} \,.
\end{equation}
Now an explicit calculation, using the many-to-one lemma (Lemma~\ref{lema:mt1}), shows that the right hand side goes to $0$ exponentially fast when $s \to \infty$ uniformly in all $t \geq t_0(s)$. This implies that as $s \to \infty$ (uniformly in all $t \geq t_0(s)$), the random variable $|N^{(x)}_{t+s}(B)|/\E_x |N_{t+s}|$ gets arbitrarily close in $L^2$ to its conditional expectation given $\F_s$. But since the latter can be shown to converge to $\nu(B) \cdot D_\infty^{(x)}$ in $L^2$ as $t \to \infty$ followed by $s \to \infty$, this shows~\eqref{eq:convmain2} in $L^2$.

The trouble with this argument, is that it only works when $r(\mu_1 - 1) > c^2$, as only in this case do we have~\eqref{eq:2ndMomBound}. This can be easily verified, by explicitly computing both sides of~\eqref{eq:2ndMomBound} using the many-to-one and many-to-two lemmas (Lemmas~\ref{lema:mt1} and~\ref{lema:mt2}). In order to handle also the range of parameters $r(\mu_1 - 1) \in (c^2/2, c^2]$, we introduce next a truncated version $\tilde{N}^{(x), M}$ 
 of the process $N^{(x)}$, which is obtained from $N^{(x)}$ by keeping at any time $t \geq 0$ only those particles whose trajectory stayed below the curve $s \mapsto M(1+s^{3/4})$ for all $s \in [0,t]$. We then show that this truncation is strong enough to guarantee that for any $M$ the process $\tilde{N}^{(x), M}$ will satisfy~\eqref{eq:2ndMomBound} (with $C$ depending on $M$), but also weak enough, so that the $L^1$ distance between $\E_x (|\tilde N_t^M(B)|)/\E_x (|N_t|)$ and
$\E_x (|N_t(B)|)/\E_x (|N_t|)$ tends to $0$ as $M \to \infty$ uniformly in $t$ large enough. Combining the last two assertions shows that~\eqref{eq:convmain2} holds in $L^1$.

To go from $L^1$ convergence to an almost-sure one, we first pick a sequence of times $(t_k)_{k \geq 1}$ tending to infinity fast enough so that the $L^1$ distance from the limit in~\eqref{eq:convmain2} is summable in $k$, but slow enough so that the gaps $t_{k+1} - t_k$ tend to $0$ as $k \to \infty$. This is always possible, thanks to the underlying branching structure, which guarantees that the $L^1$ convergence in~\eqref{eq:convmain2} is at least stretched-exponentially fast. We then use the summability in $k$ together with the Borel-Cantelli Lemma to show that~\eqref{eq:convmain2} holds almost-surely along the sequence $(t_k)_{k \geq 1}$. At the same time, the fact that the gaps vanish in the limit, allows us to show that with probability $1$,
\begin{equation}
\lim_{k \to \infty} \sup_{s \in [t_k, t_{k+1}]} 
	\left| \frac{|N^{(x)}_{s}(B)|}{\E_x (|N_s|)} - \frac{|N^{(x)}_{t_k}(B)|}{\E_x (|N_{t_k}|)} \right| = 0 \,,
\end{equation}
By combining the last two assertions, the desired almost-sure convergence follows.

The remainder of the article is organized as follows. In Section~\ref{sec:pf1} we prove Proposition~\ref{theo:main2}. We begin by recalling in Subsection~\ref{sec:mtf} the many-to-few lemmas (Lemmas~\ref{lema:mt1} and~\ref{lema:mt2}), which will be used repeatedly in the sequel. As in the proof of Theorem~\ref{theo:main3}, going beyond the second moment regime requires a truncated version of the additive martingale, and the latter is introduced in Subsection~\ref{ss:TruncD}. The remaining subsections include the rest of the proof of the theorem. Section~\ref{sec:pf2} is devoted to showing the $L^1$ convergence in Theorem~\ref{theo:main3}. It begins with Subsection~\ref{ss:Sharp}, where sharp asymptotics for $P_x(X_t \in B)$ are derived and continues with Subsection~\ref{sec:part1}, where the truncation of $N^{(x)}$ is defined. The proof is completed in Subsections~\ref{ss:concentration} and~\ref{sec:conc}. Finally, Section~\ref{sec:pf3} includes the proof for the almost-sure convergence in Theorem~\ref{theo:main3}, first along a particular sequence $(t_k)_{k \geq 1}$ (Subsection~\ref{ss:subseq}) and then as $t \to \infty$ (Subsection~\ref{ss:full}).

\section{Proof of Proposition \ref{theo:main2}}\label{sec:pf1}
In this section we give the proof of Proposition \ref{theo:main2}. First, let us notice that, since we know already that $D^{(x)}$ converges almost surely, in order to derive its $L^1$-convergence it will suffice to show that it is uniformly integrable, i.e. that
\begin{equation}
\label{eq:aui}
\lim_{K \rightarrow +\infty}\left[ \sup_{t \geq 0} \E_x( |D_t| \mathbbm{1}_{\{|D_t| > K\}})\right] = 0.
\end{equation} 

It is shown in \cite{JonckSag} that $D^{(x)}$ is bounded in $L^2$ if and only if $r(\mu_1-1)>2\lambda$ so that, in particular, for this values of $r,\mu_1$ and $\lambda$ we already have the uniform integrability. However, for $r(\mu_1-1) \in (\lambda,2\lambda]$ the uniform integrability does not follow from the approach in \cite{JonckSag} and will require a new method, one which is based on truncations of the additive martingale. 
The truncated process will turn out to be uniformly integrable (bounded in $L^2$, in fact) but still asymptotically equivalent in $L^1$ to the entire martingale $D^{(x)}$. From this, the desired $L^1$-convergence will follow.

\subsection{The many-to-few lemmas}\label{sec:mtf}

A key ingredient in the proofs of both the current proposition and Theorem~\ref{theo:main3} is a precise computation of certain first and second moments associated with the process $A=(A_t)_{t \geq 0}$. Such computations can be done easily with the help of the so-called {\em many-to-few} lemmas, which we proceed to recall. For simplicity, we will state only a simplified version of the many-to-one and many-to-two lemmas, which are all we need. For the many-to-few lemma in its full generality (and its proof) we refer to \cite{harris2015}. 

First, we notice that for any $u \in A_t$, the path $\overline{u}_t=(u_s)_{s \in [0,t]}$ is a continuous function. Therefore, it makes sense to consider for each $t > 0$ the space $C[0,t]$ of continuous functions $g:[0,t] \rightarrow \R$ endowed with a measurable space structure by considering the $\sigma$-algebra of Borel sets induced by the supremum distance on $C[0,t]$. Now, let us state the many-to-one lemma.

\begin{lemma}[Many-to-one Lemma] \label{lema:mt1} Given $t > 0$ and a measurable function $f:C[0,t] \rightarrow \R_{\geq 0}$, for every $x > 0$ we have
	\begin{equation}
	\label{eq:mt1}
	\E_x \left( \sum_{u \in A_t} f\left( \overline{u}_t\right)\right) = e^{r(\mu_1 - 1)t}\E_x \left( f\left( \overline{X}_t\right) \right).
	\end{equation}
\end{lemma}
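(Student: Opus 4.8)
The plan is to prove~\eqref{eq:mt1} by a first-branching decomposition combined with an induction on the number of branch events along ancestral lines. Fix $t>0$ and a measurable $f:C[0,t]\to\R_{\geq 0}$. For a particle $u\in A_t$ write $\mathrm{gen}(u)$ for the number of branch events that occurred on the ancestral line of $u$ during $[0,t]$, and set
\[
F_t^{(n)}(f)(x):=\E_x\Big(\sum_{u\in A_t:\,\mathrm{gen}(u)\leq n}f(\overline{u}_t)\Big),
\qquad
F_t(f)(x):=\E_x\Big(\sum_{u\in A_t}f(\overline{u}_t)\Big).
\]
Since along a fixed line the branch events form a Poisson process of rate $r$, each particle of $A_t$ almost surely undergoes only finitely many of them, so the sums defining $F_t^{(n)}(f)(x)$ increase to the one defining $F_t(f)(x)$ as $n\to\infty$. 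By monotone convergence it therefore suffices to show that $F_t^{(n)}(f)(x)=c_n(t)\,\E_x\big(f(\overline{X}_t)\big)$ for constants $c_n(t)$ that do not depend on $f$ and satisfy $c_n(t)\uparrow e^{r(\mu_1-1)t}$.

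We prove this by induction on $n$. For $n=0$ the only contributing particle is the initial one, on the event that it has not yet branched by time $t$; its trajectory is precisely $\overline{X}_t$, and since the first branch time $T$ is exponential of parameter $r$ and independent of the driving motion, $F_t^{(0)}(f)(x)=\E_x\big(f(\overline{X}_t)\,\mathbbm{1}_{\{T>t\}}\big)=e^{-rt}\,\E_x(f(\overline{X}_t))$, so $c_0(t)=e^{-rt}$. For the inductive step, decompose according to $T$. On $\{T>t\}$ the contribution is again $e^{-rt}\,\E_x(f(\overline{X}_t))$. On $\{T=s\le t\}$, condition on the path $\gamma:=(X_r)_{r\in[0,s]}$ of the initial particle up to $s$ and on the number $m$ of offspring produced: by the branching property the $m$ children start independent copies of the $(c,r,\mu)$-dynamics from $X_s$, and a particle present at time $t$ in one such sub-dynamics has generation $\le n+1$ exactly when it underwent at most $n$ branch events inside that sub-dynamics, in which case its $[0,t]$-trajectory is the concatenation of $\gamma$ with its (time-shifted) trajectory within the sub-dynamics. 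Denoting by $f_\gamma$ the (measurable) functional on $C[0,t-s]$ obtained by prepending $\gamma$, the induction hypothesis gives that each child contributes $F_{t-s}^{(n)}(f_\gamma)(X_s)=c_n(t-s)\,\E_{X_s}(f_\gamma(\overline{X}_{t-s}))$; taking expectations over $m$ (a factor $\mu_1$), then over $s$ against the density $re^{-rs}\,ds$ of $T$, and using the Markov property of $X$ --- under which prepending $\gamma=(X_r)_{r\le s}$ to an independent copy of $\overline{X}_{t-s}$ issued from $X_s$ restores the law of $\overline{X}_t$ under $P_x$ --- yields
\[
F_t^{(n+1)}(f)(x)=\Big(e^{-rt}+\mu_1 r\int_0^t e^{-r(t-u)}c_n(u)\,du\Big)\,\E_x(f(\overline{X}_t)).
\]
In particular $c_{n+1}(t):=e^{-rt}+\mu_1 r\int_0^t e^{-r(t-u)}c_n(u)\,du$ does not depend on $f$, so the induction closes. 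Solving the recursion, a direct computation of $\int_0^t e^{-r(t-u)}e^{-ru}(\mu_1 ru)^k/k!\,du$ shows by induction on $n$ that $c_n(t)=e^{-rt}\sum_{k=0}^n(\mu_1 rt)^k/k!$, whence $c_n(t)\uparrow e^{-rt}e^{\mu_1 rt}=e^{r(\mu_1-1)t}$; combined with the monotone-convergence reduction above this gives $F_t(f)(x)=e^{r(\mu_1-1)t}\,\E_x(f(\overline{X}_t))$, which is~\eqref{eq:mt1}.

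The only genuinely delicate point is the path bookkeeping in the first-branching step: one must splice the trajectory $\gamma$ traced by the initial particle before time $T$ onto the trajectories produced in the sub-dynamics after $T$, check that the prepending map $f\mapsto f_\gamma$ is measurable, and invoke the Markov property of $X$ at time $T$ to see that this splicing reproduces the law of $\overline{X}_t$; and one must keep track that the constants $c_n(t)$ produced by the recursion are $f$-independent, which is exactly what makes the induction go through. We note in passing that absorbed particles keep branching (creating further particles frozen at the origin), which is consistent with $\overline{X}_t$ on the right-hand side of~\eqref{eq:mt1} being the trajectory of the absorbed motion~\eqref{eq:defx}; this entails no modification of the proof. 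Everything else is a routine computation.
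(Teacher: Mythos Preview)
Your proof is correct. The paper itself does not prove the many-to-one lemma at all: it merely states it and refers the reader to~\cite{harris2015} for a proof of the general many-to-few formula, so there is no ``paper's own proof'' to compare against.

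Your argument---truncating by the number of ancestral branch events, setting up a first-branching renewal recursion for the constants $c_n(t)$, solving it explicitly as the partial exponential series $c_n(t)=e^{-rt}\sum_{k=0}^n(\mu_1 rt)^k/k!$, and passing to the limit by monotone convergence---is a clean, self-contained route that avoids the spine change-of-measure machinery underlying the general many-to-few lemma in~\cite{harris2015}. The spine approach buys you the many-to-two (and many-to-$k$) formulas in the same stroke, which is why the paper cites it; your approach is more elementary but specific to first moments. Your closing remark that absorbed particles must continue to branch at the origin for the identity to hold over $A_t$ (rather than $N_t$) with arbitrary $f$ is a correct and worthwhile observation, and is consistent with the paper's convention that branching occurs ``independently of the motion''.
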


Next, we state the many-to-two lemma, which we use to compute correlations between pairs of particles. Before we can do so, however, we must introduce the notion of the $2$-spine process associated with our branching dynamics: 
\begin{definition} Consider the following branching dynamics on $\R_{\geq 0}$:
	\begin{enumerate}[wide, labelindent=0pt]
		\item [i.] The dynamics starts with 2 particles, both located initially  at some $x > 0$, whose positions evolve together randomly, i.e. describing the same random trajectory, according to $\mathcal{L}$. 
		\item [ii.] These particles wait for a random exponential time $E$ of parameter $(\mu_2-\mu_1)r$, independently of their joint trajectory, and then split at their current position, each of them then evolving independently according to $\mathcal{L}$. 
	\end{enumerate} 
	Now, for $i=1,2$, let $S^{(i)}=(S^{(i)}_t)_{t \geq 0}$ be the process which indicates the position of the $i$-th particle. We call the pair $(S^{(1)},S^{(2)})$ the $2$-spine process associated with the $(\mu,r,\mathcal{L})$-branching dynamics (or just $2$-spine for short) and $E$ its splitting time. 
\end{definition}

The many-to-two lemma then goes as follows.

\begin{lemma}[Many-to-two Lemma] \label{lema:mt2} Given $t > 0$ and measurable functions $f_1,f_2:C[0,t] \rightarrow \R_{\geq 0}$, for every $x > 0$ we have
	$$
	\E_x \left( \sum_{u,v \in A_t} f_1\left( \overline{u}_t\right) f_2\left(\overline{v}_t\right) \right) = e^{2r(\mu_1 - 1)t}\E_x \left( e^{r[\text{Var}(m)+(\mu_1-1)^2] (E \wedge t)}f_1(\overline{S}^{(1)}_t)f_2(\overline{S}^{(2)}_t)\right),
	$$ where $(S^{(1)},S^{(2)})$ is a $2$-spine associated with $(\mu,r,\mathcal{L})$ and $E$ denotes its splitting time.
\end{lemma}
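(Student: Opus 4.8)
The plan is to identify both sides of the stated identity as solutions of one and the same linear renewal-type integral equation in the time variable, and then to conclude by a uniqueness argument. By monotone approximation it suffices to treat bounded measurable $f_1,f_2$; all the expectations below are then finite and, for each fixed $t$, bounded uniformly in the starting point, since $\E_x(|A_t|^2)$ does not depend on $x$ (branching is independent of the motion) and is finite because $\mu_2<\infty$. Write $\kappa:=\mathrm{Var}(m)+(\mu_1-1)^2=\mu_2-2\mu_1+1$ and $\rho:=(\mu_2-\mu_1)r$, so that the splitting time $E$ of the $2$-spine has the $\mathrm{Exp}(\rho)$ law, and note the two elementary identities $r\mu_1=r+r(\mu_1-1)$ and $\rho-r\kappa=r(\mu_1-1)$; the second is the single cancellation that makes the whole computation close. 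For a path $\gamma\in C[0,s]$ with $\gamma(s)=y$ and $\eta\in C[0,\tau]$ with $\eta(0)=y$ write $\gamma\oplus\eta\in C[0,s+\tau]$ for their concatenation, and set $G^{\gamma}_{y}(\tau):=\E_y\big(\sum_{u,v\in A_\tau}f_1(\gamma\oplus\overline u_\tau)f_2(\gamma\oplus\overline v_\tau)\big)$; the left-hand side of the lemma is $G_x(t)$, the case with no path prepended.

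\textbf{Step 1: a renewal equation for the left-hand side.} I would condition on the time $T_1\sim\mathrm{Exp}(r)$ of the first branch of the root particle and on its number $m\sim\mu$ of offspring. On $\{T_1>t\}$ there is a single particle, whose trajectory is distributed as $\overline X_t$, contributing $e^{-rt}\E_x(f_1(\overline X_t)f_2(\overline X_t))$. On $\{T_1=s\le t\}$ every particle of $A_t$ descends from the $m$ offspring born at time $s$ at position $X_s$; splitting the double sum according to whether $u$ and $v$ descend from the same offspring or from two distinct ones, using the conditional independence of the $m$ subtrees given $\overline X_s$ (each a copy of the $(c,r,\mu)$-dynamics started from $X_s$), and applying Lemma~\ref{lema:mt1} to each subtree in the distinct-offspring term (with $\E(m)=\mu_1$ and $\E(m(m-1))=\mu_2-\mu_1$), one obtains
\begin{multline}\label{eq:renewalG}
G_x(t)=e^{-rt}\,\E_x\big(f_1(\overline X_t)f_2(\overline X_t)\big)\\+\int_0^t r e^{-rs}\,\E_x\Big(\mu_1\,G^{\overline X_s}_{X_s}(t-s)+(\mu_2-\mu_1)\,e^{2r(\mu_1-1)(t-s)}\,\Psi_1(s)\Psi_2(s)\Big)\,ds,
\end{multline}
where $\Psi_k(s):=\E_x\big(f_k(\overline X_t)\,\big|\,\F^{(x)}_s\big)$ and the product $\Psi_1(s)\Psi_2(s)$ appears because distinct subtrees are independent. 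The same identity holds, verbatim, with every path prepended by an arbitrary fixed $\gamma$.

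\textbf{Step 2: the $2$-spine side solves the same equation.} Let $\widehat G^{\gamma}_{y}(\tau):=e^{2r(\mu_1-1)\tau}\E_y\big(e^{r\kappa(E\wedge\tau)}f_1(\gamma\oplus\overline S^{(1)}_\tau)f_2(\gamma\oplus\overline S^{(2)}_\tau)\big)$ be the right-hand side of the lemma with $\gamma$ prepended. First, conditioning the $2$-spine on $E$ and using $2r(\mu_1-1)-\rho+r\kappa=r(\mu_1-1)$, the contribution of $\{E\ge\tau\}$ to $\widehat G^{\gamma}_{y}(\tau)$ (on which the two spines coincide and follow $X$) equals $e^{r(\mu_1-1)\tau}\E_y(f_1(\gamma\oplus\overline X_\tau)f_2(\gamma\oplus\overline X_\tau))$, which by Lemma~\ref{lema:mt1} is exactly the diagonal ($u=v$) part of $G^{\gamma}_{y}(\tau)$, while the contribution of $\{E<\tau\}$ is the off-diagonal part. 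Now substitute $\widehat G$ into the right-hand side of~\eqref{eq:renewalG} and split the resulting expression into its diagonal and off-diagonal pieces. The diagonal piece is $e^{-rt}\E_x(f_1(\overline X_t)f_2(\overline X_t))$ plus the diagonal part of $\mu_1\int_0^t r e^{-rs}\E_x(\widehat G^{\overline X_s}_{X_s}(t-s))\,ds$; a short computation, in which the $s$-integral telescopes thanks to $r\mu_1=r+r(\mu_1-1)$, shows this equals $e^{r(\mu_1-1)t}\E_x(f_1(\overline X_t)f_2(\overline X_t))$, the diagonal of $\widehat G_x(t)$. The off-diagonal piece is the $(\mu_2-\mu_1)$-term of~\eqref{eq:renewalG} plus the off-diagonal part of the same integral; using the Markov property to splice the successive motion segments into a single lineage that branches once at a random time, and then interchanging the order of integration, this reorganizes into $\rho e^{2r(\mu_1-1)t}\int_0^t e^{-r(\mu_1-1)s}\E_x\big(f_1(\overline S^{(1)}_t)f_2(\overline S^{(2)}_t)\,\big|\,E=s\big)\,ds$, which is the off-diagonal of $\widehat G_x(t)$; here $\rho-r\kappa=r(\mu_1-1)$ is precisely the exponent that survives. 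Hence $\widehat G$ satisfies~\eqref{eq:renewalG}. Carrying out this bookkeeping cleanly, i.e.\ matching the geometric competition in~\eqref{eq:renewalG} between the weights $\mu_1$ (the two lineages stay together) and $\mu_2-\mu_1$ (they separate) against the single exponential splitting time $E\sim\mathrm{Exp}(\rho)$ and the multiplicative correction $e^{r\kappa(E\wedge t)}$, is the technical heart of the argument and the step I expect to require the most care.

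\textbf{Step 3: uniqueness.} Set $\Delta^{\gamma}_{x}(t):=G^{\gamma}_{x}(t)-\widehat G^{\gamma}_{x}(t)$. Subtracting the two versions of~\eqref{eq:renewalG}, the non-recursive terms cancel (they do not involve $G$ or $\widehat G$) and $\Delta^{\gamma}_{x}(t)=\mu_1\int_0^t r e^{-rs}\,\E_x\big(\Delta^{\gamma\oplus\overline X_s}_{X_s}(t-s)\big)\,ds$. Let $\delta(t):=\sup\{|\Delta^{\gamma}_{x}(t)| : x>0,\ \gamma\ \text{a prepended path}\}$; since $|\Delta^{\gamma}_{x}(t)|\le 2\|f_1\|_\infty\|f_2\|_\infty\,\E_x(|A_t|^2)$, the function $\delta$ is finite and locally bounded, and the displayed identity yields $\delta(t)\le r\mu_1\int_0^t\delta(t-s)\,ds=r\mu_1\int_0^t\delta(u)\,du$, whence $\delta\equiv 0$ by Gr\"onwall's inequality. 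Thus $G_x(t)=\widehat G_x(t)$ for all bounded $f_1,f_2$, and the general case follows by monotone convergence. An alternative that sidesteps the renewal computation of Step 2 is to deduce the lemma directly from the $2$-spine change-of-measure and spine-decomposition machinery of~\cite{harris2015}, at the cost of more formalism.
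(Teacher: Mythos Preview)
The paper does not prove this lemma at all; immediately before stating it the authors write that ``for the many-to-few lemma in its full generality (and its proof) we refer to~\cite{harris2015}''. So there is nothing to compare your argument against in the paper itself---the paper simply imports the result from the spine change-of-measure formalism developed in~\cite{harris2015}, which you mention as an alternative at the end of your proposal.

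Your renewal-equation approach is a genuine and correct alternative. The decomposition in Step~1 by first branching time is standard, and I checked the bookkeeping of Step~2: writing $\Phi(u):=\E_x(\Psi_1(u)\Psi_2(u))$, the off-diagonal piece of the right-hand side of your renewal equation with $\widehat G$ substituted becomes, after the change of variables $u=s+v$ and Fubini,
\[
\rho\,e^{2r(\mu_1-1)t}\int_0^t \Phi(u)\,e^{-r(\mu_1-1)u}\big(1-e^{-r\mu_1 u}\big)\,du
\;+\;\rho\,e^{2r(\mu_1-1)t}\int_0^t \Phi(u)\,e^{-r(2\mu_1-1)u}\,du,
\]
and the second summand exactly cancels the $e^{-r\mu_1 u}$ part of the first, leaving precisely the off-diagonal of $\widehat G_x(t)$. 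The Gr\"onwall uniqueness in Step~3 is fine once you note that $\widehat G$ is also bounded uniformly in $x$ and in the prepended path (by $e^{(2r(\mu_1-1)+r\kappa)t}\|f_1\|_\infty\|f_2\|_\infty$), so $\delta$ is locally bounded. What your route buys is a self-contained, elementary proof that avoids introducing the size-biased spine measure and the Radon--Nikodym machinery; what the~\cite{harris2015} route buys is a conceptual picture (the two spines as distinguished lines of descent under a tilted law) that generalises effortlessly to $k$ spines and to more complicated branching mechanisms. Either is acceptable here; if you keep your version, I would recommend spelling out the Step~2 computation above rather than leaving it as ``bookkeeping''.
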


\subsection{Truncation of $D_t^{(x)}$} 
\label{ss:TruncD}
Given $t,M > 0$ let us first set $J_t^{(M)}:= \left[0,M(1+ t^{\frac{3}{4}})\right)$ and then define, for any $x,M > 0$, the truncated process $\tilde{D}^{(x),M}=(\tilde{D}^{(x),M})$ by the formula
$$
\tilde{D}^{(x),M}_t:= \frac{1}{h(x)}\sum_{u \in \tilde{N}^{(x),M}_t} h(u_t)e^{-(r(\mu_1-1)-\lambda)t} \,,
$$ where 
\begin{equation} \label{eq:defn}
\tilde{N}^{(x),M}_t:=\{ u \in A_t^{(x)} : u_s \in J^{(M)}_s \text{ for all }s \in [0,t]\}.
\end{equation} The key properties of the truncated process $\tilde{D}^{(x),M}$ are contained in the two propositions below.

\begin{proposition}\label{prop:d1} For each $x> 0$ there exist constants $C,M_0 >0 $ (depending only on $x$ and $c$) such that for any $M \geq M_0$ one has
	\begin{equation} \label{eq:trunD}
	\sup_{t \geq 0} \E_x(|D_t - \tilde{D}_t^{M}|) \leq e^{-CM^2}.
	\end{equation}
\end{proposition}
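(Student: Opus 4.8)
The plan is to estimate the expected mass that gets discarded by the truncation, namely $\E_x(|D_t - \tilde D_t^M|)$, by writing the difference as a single sum over $A_t^{(x)}$ and applying the many-to-one lemma. Since $\tilde N_t^{(x),M} \subseteq A_t^{(x)}$ consists exactly of those particles whose entire trajectory has stayed inside $J_s^{(M)} = [0, M(1+s^{3/4}))$, the difference $D_t^{(x)} - \tilde D_t^{(x),M}$ is a nonnegative quantity (all summands $h(u_t) \ge 0$), so that
\[
\E_x\big(|D_t - \tilde D_t^M|\big)
= \frac{e^{-(r(\mu_1-1)-\lambda)t}}{h(x)}\,\E_x\!\Big( \sum_{u \in A_t^{(x)}} h(u_t)\,\mathbbm{1}\{\exists\, s \in [0,t]:\ u_s \notin J_s^{(M)}\}\Big).
\]
By Lemma~\ref{lema:mt1} this equals $\frac{e^{\lambda t}}{h(x)}\,\E_x\big(h(X_t)\,\mathbbm{1}\{\exists\, s\le t:\ X_s \ge M(1+s^{3/4})\}\big)$, where I have used that $X_s \ge 0$ always, so the only way to exit $J_s^{(M)}$ is from above. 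Recalling $h(x) = \tfrac{1}{\sqrt{2\pi\lambda^2}} x e^{cx}$ and $\rmP_t(h) = e^{-\lambda t} h$, the prefactor $\tfrac{e^{\lambda t}}{h(x)}\E_x(h(X_t)\,\cdot\,)$ is (up to the indicator) exactly a probability-like normalization; more precisely, it is $\E_x^h$ of the indicator, where $P_x^h$ is the $h$-transform from~\eqref{eq:htransform}. So the whole quantity collapses to
\[
\E_x\big(|D_t - \tilde D_t^M|\big) = P_x^h\big(\exists\, s \in [0,t]:\ X_s \ge M(1+s^{3/4})\big) \le P_x^h\big(\exists\, s \ge 0:\ X_s \ge M(1+s^{3/4})\big),
\]
and the right-hand side is now manifestly nondecreasing in $t$, so taking the supremum over $t\ge 0$ costs nothing.

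The second step is to bound the crossing probability for the $h$-transformed motion. As the paper notes, the $h$-transform of $X$ (Brownian motion with drift $-c$, killed at $0$) is a $3$-dimensional Bessel process started at $x$; in particular it behaves for large times like a Brownian motion with \emph{no} drift (the Bessel drift $1/X_s$ is negligible once $X_s$ is large). The curve $s \mapsto M(1+s^{3/4})$ grows like $s^{3/4}$, which beats the diffusive scale $s^{1/2}$, so a standard reflection/maximal-inequality argument should give exponential decay in $M^2$. Concretely, I would decompose the time axis dyadically, say over $[2^k, 2^{k+1})$, $k \ge 0$, together with an initial piece $[0,1]$. On the block $[2^k, 2^{k+1})$ the barrier is at least $M\,2^{3k/4}$, while $X$ run under $P_x^h$ can be dominated (e.g. by comparison with Brownian motion, using that the Bessel drift is nonnegative but for an \emph{upper} bound one instead bounds $X_s$ by $x + $ a Brownian motion plus the integrated small drift, or simply uses the explicit Bessel transition density) so that
\[
P_x^h\Big(\sup_{s \in [2^k, 2^{k+1})} X_s \ge M\,2^{3k/4}\Big) \le C \exp\big(-c'\, M^2 \,2^{3k/2}/2^{k}\big) = C\exp(-c'\,M^2\,2^{k/2}),
\]
for $M$ larger than some $M_0$ depending on $x$ and $c$ (to absorb the starting point $x$ and the $+1$ in the barrier). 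Summing the geometric-type series over $k \ge 0$, plus handling the bounded time interval $[0,1]$ separately (where the barrier is $\ge M$ and a one-step Gaussian tail bound suffices), yields $\sum_k C e^{-c' M^2 2^{k/2}} \le e^{-CM^2}$ for $M \ge M_0$, which is exactly~\eqref{eq:trunD}.

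The main obstacle is the second step: obtaining a clean exponential-in-$M^2$ tail bound for the running maximum of the $3$-dimensional Bessel process against a superdiffusive barrier, uniformly over all time and with constants that depend only on $x$ and $c$. The Bessel drift $1/X_s$ is not bounded near $0$, so one cannot naively dominate $X^h$ by Brownian motion; the cleanest route is probably to use the explicit transition density of the $3$-d Bessel process (or the known law of its running maximum), or alternatively to pass to the squared Bessel process and use its additivity / explicit moment generating function. Either way, the geometric summation over dyadic time-blocks is the mechanism that converts the pointwise-in-$s$ Gaussian bounds into a supremum bound, and one has to be careful that the constant $M_0$ chosen to start the geometric decay depends only on $x, c$ — this is where the factor $e^{cx}$ hidden in $h(x)$, and the $+1$ in the definition of $J_s^{(M)}$ (which ensures the barrier is bounded away from $0$ even at $s=0$), are used.
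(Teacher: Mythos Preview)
Your first step---writing $D_t - \tilde D_t^M$ as a sum over $A_t^{(x)}$, applying the many-to-one lemma, and recognizing the result as $P_x^h\big(\exists\,s\in[0,t]:\ X_s \ge M(1+s^{3/4})\big)$---is exactly what the paper does (see~\eqref{eq:ht}). The reduction to a boundary-crossing probability under the $h$-transform is correct.

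The second step is where you and the paper diverge. You attempt to bound the crossing probability \emph{directly} for the $3$-dimensional Bessel process under $P_x^h$, via a dyadic time decomposition and Gaussian-type tail bounds, and you correctly flag the unbounded drift $1/X_s$ as the main technical hurdle. This can be made to work (e.g., through the representation of Bessel-$3$ as the modulus of a $3$-dimensional Brownian motion, which reduces the running-maximum estimate to three coordinate-wise reflection bounds), but you have not actually carried it out.

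The paper sidesteps this hurdle entirely by a neat change of measure back to $P_x$. Using the martingale relation $dP_x^h|_{\mathcal{F}_t}=M_t^{(x)}\,dP_x$ and the strong Markov property at the hitting time $H^M:=\inf\{s:X_s\ge M(1+s^{3/4})\}$, together with $\E_y(h(X_s))=h(y)e^{-\lambda s}$, one gets
\[
P_x^h(H^M\le t)\ \le\ \sum_{k=0}^{\lfloor t\rfloor} h\big(M(1+(k+1)^{3/4})\big)\,e^{\lambda(k+1)}\,P_x\big(H^M\in[k,k+1]\big).
\]
The point is that under $P_x$ the process is just Brownian motion with drift $-c$, so $P_x(H^M\in[k,k+1])$ is bounded by a straightforward reflection-principle Gaussian tail, yielding a factor $e^{-C_2 M^2(1+\sqrt{k})-\lambda(k+1)}/h(M(1+(k+1)^{3/4}))$ that exactly cancels the $h$ and $e^{\lambda(k+1)}$ in the sum and leaves a series summable to $Ce^{-C_2 M^2}$.

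In short: your reduction is right, and your proposed direct Bessel estimate is a legitimate alternative, but the paper's trick of undoing the $h$-transform via strong Markov converts the problem into an elementary reflection-principle bound for drifted Brownian motion and avoids any Bessel-specific input.
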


\begin{proposition}\label{prop:d2} For any $x,M > 0$ one has that
	$$
	\sup_{t \geq 0} \E_x (|\tilde{D}^{M}_t|^2)<+\infty.
	$$
\end{proposition}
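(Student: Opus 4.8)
The plan is to compute $\E_x(|\tilde D^M_t|^2)$ via the many-to-two lemma and then collapse the resulting expression down to a single quantity involving the one-particle motion $X$, which the truncation makes easy to control. Writing $f_i(\overline w_t):= h(w_t)\mathbbm 1\{w_s \in J_s^{(M)}\text{ for all }s\in[0,t]\}$, one has $\tilde D^M_t = h(x)^{-1}e^{-(r(\mu_1-1)-\lambda)t}\sum_{u\in A_t}f_1(\overline u_t)$, and hence, expanding the square and applying Lemma~\ref{lema:mt2},
$$
\E_x(|\tilde D^M_t|^2)=\frac{e^{-2(r(\mu_1-1)-\lambda)t}}{h(x)^2}\,\E_x\Big(\sum_{u,v\in A_t}f_1(\overline u_t)f_2(\overline v_t)\Big)=\frac{e^{2\lambda t}}{h(x)^2}\,\E_x\Big(e^{r[\mathrm{Var}(m)+(\mu_1-1)^2](E\wedge t)}f_1(\overline S^{(1)}_t)f_2(\overline S^{(2)}_t)\Big).
$$
I would then condition on the splitting time $E$ of the $2$-spine and use the Markov property at time $E\wedge t$. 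On $\{E=a\le t\}$, conditionally on the common trajectory up to time $a$, the two branches evolve as independent copies of $X$ started from the current position, each required to stay in the common tube; since the tube $(J_s^{(M)})_{s\ge 0}$ is increasing, dropping the tube constraint on $[a,t]$ only increases each branch's contribution, and the eigenrelation $\E_y(h(X_s))=e^{-\lambda s}h(y)$ bounds it by $e^{-\lambda(t-a)}h(X_a)$. On $\{E>t\}$ there is no split and $f_1(\overline S^{(1)}_t)f_2(\overline S^{(2)}_t)=h(X_t)^2\mathbbm 1\{X_s\in J_s^{(M)}\ \forall s\le t\}$.

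Carrying out these steps and using the algebraic identity $\mathrm{Var}(m)+(\mu_1-1)^2-(\mu_2-\mu_1)=1-\mu_1$ — so that the spine's exponential weight, the splitting-time density $(\mu_2-\mu_1)r\,e^{-(\mu_2-\mu_1)ra}$, the prefactor $e^{2\lambda t}$ and the $e^{-2\lambda(t-a)}$ from the two branches combine into an overall factor $e^{(2\lambda-r(\mu_1-1))a}$ — the bound reduces to
$$
\E_x(|\tilde D^M_t|^2)\le\frac{1}{h(x)^2}\Big[(\mu_2-\mu_1)r\int_0^t e^{(2\lambda-r(\mu_1-1))a}H(a)\,da+e^{(2\lambda-r(\mu_1-1))t}H(t)\Big],
$$
where $H(a):=\E_x\big(h(X_a)^2\,\mathbbm 1\{X_s\in J_s^{(M)}\ \forall s\in[0,a]\}\big)$. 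It therefore suffices to show that $\sup_{a\ge 0}e^{(2\lambda-r(\mu_1-1))a}H(a)<\infty$ and that this quantity is integrable in $a$ on $[0,\infty)$. This is where the truncation pays off. A Girsanov change of measure removing the drift $-c$ from $X$ rewrites $H(a)=\tfrac{1}{2\pi\lambda^2}e^{cx-\lambda a}\,\E_x^{(0)}\big(B_a^2 e^{cB_a}\mathbbm 1\{0<B_s<M(1+s^{3/4})\ \forall s\le a\}\big)$ for a driftless Brownian motion $B$ started at $x$ (here $c^2/2=\lambda$ is used); bounding $B_a^2 e^{cB_a}\le M^2(1+a^{3/4})^2 e^{cM(1+a^{3/4})}$ on the event in the indicator and discarding the residual probability gives
$$
H(a)\le C_{x,M}\,(1+a^{3/4})^2\,e^{-\lambda a+cM a^{3/4}}.
$$
Multiplying by $e^{(2\lambda-r(\mu_1-1))a}$ leaves $C_{x,M}(1+a^{3/4})^2 e^{-(r(\mu_1-1)-\lambda)a+cM a^{3/4}}$, which is bounded on $[0,\infty)$ and integrable there precisely because $r(\mu_1-1)-\lambda>0$ (the super-criticality hypothesis in force in this section) and $a^{3/4}=o(a)$. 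Substituting back into the displayed bound completes the proof.

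I expect the main difficulty to be organizational rather than conceptual: correctly conditioning on $E$ inside the many-to-two formula, bookkeeping the three sources of exponential behavior (the spine weight $e^{r[\mathrm{Var}(m)+(\mu_1-1)^2](E\wedge t)}$, the splitting-time density, and the $e^{2\lambda t}$ prefactor together with the $e^{-2\lambda(t-a)}$ produced by bounding the two branches), and checking the cancellation $\mathrm{Var}(m)+(\mu_1-1)^2-(\mu_2-\mu_1)=1-\mu_1$, which is exactly what forces the surviving exponent $2\lambda-r(\mu_1-1)$ to lie strictly below $\lambda$ — so that the $e^{-\lambda a}$ gained from Girsanov still dominates even after the sub-exponential loss $e^{cMa^{3/4}}$ incurred by the curved barrier. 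The moment assumption $\mu_2<\infty$ enters only through finiteness of the constant $\mathrm{Var}(m)+(\mu_1-1)^2$ in the many-to-two lemma.
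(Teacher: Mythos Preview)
Your proof is correct and follows essentially the same route as the paper: apply the many-to-two lemma, split according to whether $E>t$ or $E\le t$, bound each post-split branch by the eigenrelation $\E_y(h(X_s))=e^{-\lambda s}h(y)$, and then use the truncation to make the remaining one-particle quantity integrable against $e^{-(r(\mu_1-1)-\lambda)a}$. The only cosmetic difference is in the last step: where you invoke Girsanov to bound $H(a)$, the paper instead rewrites $e^{\lambda a}h(X_a)^2=h(x)\,M_a\,h(X_a)$ and uses $\E_x(M_a)=1$ together with $h(X_a)\le h(M(1+a^{3/4}))$ on the truncation event, arriving at the same bound $h(M(1+a^{3/4}))e^{-(r(\mu_1-1)-\lambda)a}$ without the change-of-measure bookkeeping.
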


From these two properties it is straightforward to deduce the uniform integrability of $D^{(x)}$. Indeed, it follows from Proposition \ref{prop:d2} that $\tilde{D}^{(x),M}$ is uniformly integrable for each $M > 0$ so that it is now an simple exercise using \eqref{eq:trunD} to see that $D^{(x)}$ must be also. Therefore, in order to obtain the first statement of Proposition \ref{theo:main2}, it will suffice to show Propositions \ref{prop:d1} and \ref{prop:d2} above.

\subsection{Proof of Proposition \ref{prop:d1}}

Let us note that, by the many-to-one lemma (Lemma \ref{lema:mt1}), \eqref{eq:trunD} is equivalent to showing that there exists $C > 0$ such that for all $M$ sufficiently large
	\begin{equation} \label{eq:ht}
    \sup_{t \geq 0}	P^h_x( \exists \,s \in [0,t] \text{ such that } X_s \notin J_s^{(M)}) \leq e^{-CM^2},
	\end{equation} where $P^h$ is the $h$-transform of $X$ given by \eqref{eq:htransform}. Therefore, it will suffice to show \eqref{eq:ht}. To do this, we note that if $H^{(x),M}:=\inf\{ s \geq 0 : X_s^{(x)} \notin J_s^{(M)}\}$ then by the strong Markov property for $H^{(x),M}$ (under the measure $P$) we have the bound
	\begin{align*}
	P^h_x( \exists \,s \in [0,t] \text{ such that } X_s \notin J_s^{(M)}) &= P^h_x( H^{M} \leq t)\\
	& = \frac{e^{\lambda t}}{h(x)}\int_0^{t} \E_{M(1+s^{\frac{3}{4}})}(h(X_{t-s})) P_x(H^M \in ds) 
	\\ & \leq \sum_{k=0}^{\lfloor t \rfloor} h(M(1+(k+1)^{\frac{3}{4}})e^{\lambda (k+1)}P_x( H^M \in [k,k+1]), 
	\end{align*} where in the last line we have used that $\E_y(M_s)=1$ holds for every $y,s> 0$.

Now, it follows from  the Reflection Principle and standard Gaussian estimates that for $M$ sufficiently large (depending only on $x$ and $c$) 
	\begin{align*}
	P_x(H^M \in [k,k+1]) &\leq P\left( \sup_{s \in [0,k+1]} B_s \geq M(1+k^{\frac{3}{4}})+ck -x\right)
	\\
	& = 2 P\left( B_{k+1} \geq M(1+k^{\frac{3}{4}})+ck -x)\right)
	\\
	& \leq \frac{C_1}{h(M(1+(k+1)^{\frac{3}{4}}))}e^{-C_2 M^2(1+\sqrt{k})-\lambda (k+1)} \,,
	\end{align*} for some constants $C_1,C_2> 0$ depending only on $x$ and $c$. In particular, for all $M$ sufficiently large 
	$$
	\sup_{t \geq 0} P^h_x( \exists \,s \in [0,t] \text{ such that } X_s \notin J_s^{(M)}) \leq C_3 e^{-C_2 M^2} \,,
	$$ for some $C_3=C_3(x,c)>0$ and so \eqref{eq:ht} now follows.

\subsection{Proof of Proposition \ref{prop:d2}}

Observe that by many-to-two lemma (Lemma \ref{lema:mt2}) we have 
	\begin{align*}
	\E_x(|\tilde{D}_t^M|^2) &= \frac{e^{-2(r(\mu_1-1)-\lambda)t}}{h^2(x)}\E_x \left( \sum_{u,v \in A_t} h(u_t)h(v_t)\mathbbm{1}_{\{\overline{u}_t,\overline{v}_t \in T^{(M)}_t\}}\right) \\
	& = \frac{e^{2\lambda t}}{h^2(x)}\E_x \left( h(S^{(1)}_t)h(S^{(2)}_t) \mathbbm{1}_{\{\overline{S}^{(1)}_t,\overline{S}^{(2)}_t \in T^{(M)}_t\}}e^{r[\text{Var}(m)+(\mu_1-1)^2](E\wedge t)}\right),
	\end{align*} where 
	$$
	T^{(M)}_t:=\{ g \in C[0,t] : g(s) \in J^{(M)}_s \text{ for all }s \in [0,t]\}.
	$$ 
	By separating in cases depending on whether $E > t$ or not and using the independence of $E$ from the motion of the $2$-spine, a simple calculation yields
	$$
	\E_x(|\tilde{D}^M_t|^2) = (1)_{t} + (2)_{ t},
	$$ with
	\begin{equation} \label{eq:boundd1}
	(1)_{t} := \frac{e^{-(r(\mu_1-1)-\lambda)t}}{h(x)}\E^h_x(h(X_t)\mathbbm{1}_{\{\overline{X}_t \in T^{(M)}_t\}}) \leq \frac{e^{-(r(\mu_1-1)-\lambda)t}}{h(x)} h(M(1+t^{\frac{3}{4}}))
	\end{equation} and 
	$$
	(2)_{t}: = \frac{(\mu_2-\mu_1)r}{h^2(x)}e^{2\lambda t} \int_0^t \E_x\left(h(X^{(1),s}_t)h(X^{(2),s}_t) \mathbbm{1}_{\{\overline{X}^{(1),s}_t,\overline{X}^{(2),s}_t \in T^{(M)}_t\}}\right)e^{-r(\mu_1-1)s}ds,
	$$ where $X^{(1),s}$ and $X^{(2),s}$ are two coupled copies of the Markov process $X$ which coincide \mbox{until time $s$} and then evolve independently after $s$. 

	Now, notice that
	$$
	(2)_t \leq \frac{(\mu_2-\mu_1)r}{h^2(x)}e^{2\lambda t} \int_0^t \E_x\left(h(X^{(1),s}_t)h(X^{(2),s}_t) \mathbbm{1}_{\{X^{(1),s}_s <M(1+s^{\frac{3}{4}})\}}\right)e^{-r(\mu_1-1)s}ds \,,
	$$ so that, by conditioning on $X^{(1),s}_s$, we obtain that
	\begin{align*} \label{eq:2thd}
	(2)_{t} &\leq \frac{(\mu_2-\mu_1)r}{h(x)}\int_0^t \E_x\left(\mathbbm{1}_{\{X_s \leq M(1+s^{\frac{3}{4}})\}}M_s h(X_s)\E^2_{X_s}(M_{t-s})\right)e^{-(r(\mu_1-1)-\lambda)s}ds\\
	& \leq \frac{(\mu_2-\mu_1)r}{h(x)}\int_0^t h(M(1+{s}^{\frac{3}{4}}))e^{-(r(\mu_1-1)-\lambda)s}ds \\
	& \leq \frac{(\mu_2-\mu_1)r}{h(x)}\int_0^\infty h(M(1+{s}^{\frac{3}{4}}))e^{-(r(\mu_1-1)-\lambda)s}ds<+\infty \,,
	\end{align*} from where, together with \eqref{eq:boundd1}, the result now follows.	

\subsection{Strict positivity of $D^{(x)}_\infty$ in the event of survival}

To conclude the proof of Proposition~\ref{theo:main2}, it only remains to show that
\begin{equation}
\label{eq:dpos}
P_x( D_\infty > 0 \,|\, |N_t| > 0 \text{ for all }t) = 1.
\end{equation} Notice that since
\begin{equation} \label{eq:dpos2}
\{|N_t| = 0 \text{ for some }t\} \subseteq \{D^{(x)}_\infty = 0\},
\end{equation} in order to obtain \eqref{eq:dpos} it will be enough to show that both events in \eqref{eq:dpos2} have the same probability. For this purpose, we follow the approach in \cite{harris2006v1}. Let us define $\sigma:(0,+\infty) \rightarrow [0,1]$ by the formula
$$
\sigma(x):=P_x(D_\infty = 0).
$$ Observe that $\sigma$ is monotone decreasing. Indeed, since for any pair $x \leq y$ one has that $N^{(x)} \preceq N^{(y)}$, i.e. there exists a coupling of these processes such that 
$$
|N^{(x)}_t((a,+\infty))| \leq  
|N^{(y)}_t((a,+\infty))| \,,
$$ for every $a > 0$, $t \geq 0$. Using this coupling and the monotonicity of $h$ one can construct for any $x \leq y$ versions of $D^{(x)}$ and $D^{(y)}$ such that $h(x)D^{(x)}_t \leq h(y)D^{(y)}_t$ for all $t \geq 0$. In particular, by taking $t \rightarrow +\infty$ on this inequality we see that $h(x)D^{(x)}_\infty \leq h(y)D^{(y)}_\infty$, which implies the monotonicity of $\sigma$.

On the other hand, an easy computation using the branching property shows that
$$
D^{(x)}_{\infty} = \frac{e^{-(r(\mu_1-1)-\lambda)}}{h(x)} \sum_{u \in N^{(x)}_1} h(u_1) D^{(u_1)}_\infty \,,
$$ where the random variables $\{D^{(u_1)}_\infty : u \in N^{(x)}_1\}$ are all independent. In particular, we obtain that $D^{(x)}_\infty=0$ if and only if $D^{(u_1)}_\infty=0$ for all $u \in N^{(x)}_1$, which yields
\begin{equation} \label{eq:fp}
\sigma(x)= \E_x\left( \prod_{u \in N_1} \sigma(u_1)\right) \,,
\end{equation} with the convention that $\prod_{u \in \emptyset} \sigma(u_1) = 1$, used when $|N_1^{(x)}|=0$. 

Now, if we denote by $\bar{N}=(\bar{N}_t)_{t \geq 0}$ the branching dynamics (starting at $0$)  associated with a Brownian motion with drift $-c$ but \textit{without} killing at $0$, then it is not hard to construct a coupling between $\bar{N}$ and $\{N^{(x)}:x >0\}$ such that the limits
$$
\lim_{x \rightarrow +\infty} |N_1^{(x)}((0,a))|=0\hspace{1cm}\text{ and }\hspace{1cm}
\lim_{x \rightarrow +\infty} |N_1^{(x)}| = |\bar{N}_1|
$$ hold almost surely for any $a > 0$. Using this coupling, the monotonicity of $\sigma$ yields
$$
\sigma(\infty):=\lim_{x \rightarrow +\infty} \sigma(x) = \E \left(\sigma(\infty)^{|\bar{N}_1|}\right),
$$ from where we conclude that $\sigma(\infty)$ must be either $0$ or $1$. But since we have already shown that $D^{(x)}_{t}$ converges in $L^1$ to $D^{(x)}_\infty$, we have that $\E_x(D_\infty)=1$ and therefore that $\sigma(x)< 1$ for all $x > 0$, so that it must be $\sigma(\infty)=0$. 

Iterating the relation in \eqref{eq:fp} yields that
\begin{equation} \label{eq:dom}
\sigma(x)=\E_x\left( \prod_{u \in N_n} \sigma(u_n)\right) \leq P_x(|N_n|=0)+ \E_x\left( \sigma\left(\max_{u \in N_n} u_n\right) \mathbbm{1}_{\{|N_n|>0\}}\right) \,,
\end{equation} for every $n \in \N$. But, since one has $\lim_{n \rightarrow +\infty} [\max_{u \in N_n^{(x)}} u_n]=+\infty$ in the event of survival by \cite[Lemma 2]{harris2006v1}, by taking the limit as $n \rightarrow +\infty$ in \eqref{eq:dom} and using the bounded convergence theorem, we conclude that
$$
\sigma(x) \leq P_x( |N_t|= 0 \text{ for some }t) +\sigma(\infty)= P_x( |N_t|= 0 \text{ for some }t).
$$ Since the reverse inequality is immediate by \eqref{eq:dpos2}, the result now follows.
 
\section{Proof of Theorem \ref{theo:main3} - $L^1$-convergence}\label{sec:pf2}

We shall first show that~\eqref{eq:convmain2} holds in $L^1$. As in the case of Proposition~\ref{theo:main2}, the results in \cite{JonckSag} can be used to show that~\eqref{eq:convmain2} holds in $L^2$ if and only if $r(\mu_1-1) > 2\lambda$, but their approach cannot be used directly to show $L^1$-convergence in the region $r(\mu_1-1) \in (\lambda,2\lambda]$ where the $L^2$-norm is in fact exploding, i.e.
$$
\lim_{t \rightarrow +\infty} \frac{\E_x(|N_t|^2)}{\E_x^2(|N_t|)}=+\infty.
$$ Thus, we must resort to truncations once again to obtain the desired result.

\subsection{Sharp asymptotics for $P_x(X_t \in B)$} 
\label{ss:Sharp}

The first step towards proving Theorem \ref{theo:main3} will be to obtain suitable bounds for the error term in the asymptotics shown in \eqref{eq:asymp}. These are contained in the following lemma.

\begin{lemma}\label{lemma:asymp} For any $x,t>0$ we have that
	\begin{equation}\label{eq:asymp2}
	P_x(X_t > 0) = h(x)t^{-\frac{3}{2}}e^{-\lambda t}(1+\varepsilon(x,t)) \,,
	\end{equation} where the error term $\varepsilon(x,t)$ satisfies the bounds
	\begin{equation} \label{eq:asymp3}
	e^{-\frac{x^2}{2t}}\left(1-\frac{3}{2}t^{-1}\right) \leq 1+\varepsilon(x,t) \leq 1.
	\end{equation}
	Furthermore, for any $B \in \B_{(0,+\infty)}$ we have that
	\begin{equation}
	\label{eq:asympB}
	P_x(X_t \in B) = h(x)t^{-\frac{3}{2}}e^{-\lambda t}(\nu(B)+\varepsilon_B(x,t)) \,,
	\end{equation} where the error term $\varepsilon_B(x,t)$ satisfies the bound
	\begin{equation} \label{eq:cotaeb}
	|\varepsilon_B(x,t)|\leq \left[ \frac{C}{t}(x+1)^2\right] \wedge 2 \,,
	\end{equation} for some constant $C > 0$ depending only on $c$. 
	In particular, we have as $t \to \infty$,
	\begin{equation}
	\label{eq:NAsy}
	\E_x(|N_t(B)|) \sim e^{(r(\mu_1-1) - \lambda)t} t^{-3/2} h(x) \nu(B) \,.
	\end{equation}
\end{lemma}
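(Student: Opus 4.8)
The plan is to write down an explicit formula for the sub-probability transition density of the killed diffusion $X$, and then to read off both the leading terms and the error bounds by entirely elementary estimates. By Girsanov's theorem applied up to the absorption time, together with the reflection principle for Brownian motion killed at the origin, one has for $y>0$
\[
P_x(X_t\in dy)=q_t(x,y)\,dy\,,\qquad q_t(x,y)=\frac{e^{c(x-y)-\lambda t}}{\sqrt{2\pi t}}\Big(e^{-\frac{(y-x)^2}{2t}}-e^{-\frac{(y+x)^2}{2t}}\Big)\,,
\]
(here $\lambda=c^2/2$, and the change of measure relating the killed drifted motion to standard Brownian motion killed at $0$ has density $e^{-c(X_t-x)-\lambda t}$). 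Using the identity $e^{-(y-x)^2/(2t)}-e^{-(y+x)^2/(2t)}=2e^{-(x^2+y^2)/(2t)}\sinh(xy/t)$ this becomes
\[
q_t(x,y)=\frac{2e^{c(x-y)-\lambda t}}{\sqrt{2\pi t}}\,e^{-\frac{x^2+y^2}{2t}}\,\sinh\!\Big(\tfrac{xy}{t}\Big)\,,\qquad y>0\,.
\]
The point of this form is that $\sinh(xy/t)\to xy/t$ and $e^{-(x^2+y^2)/(2t)}\to 1$ as $t\to\infty$, so that $q_t(x,y)$ is, to leading order, $h(x)t^{-3/2}e^{-\lambda t}\cdot 2\lambda y e^{-cy}$; and by~\eqref{eq:hnu} (i.e.\ $h(x)=\tfrac{2xe^{cx}}{c^2\sqrt{2\pi}}$ and $c^2=2\lambda$) the function $2\lambda y e^{-cy}\mathbbm{1}_{(0,+\infty)}(y)$ is exactly $d\nu/dy$. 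Thus everything reduces to quantifying the two errors incurred by replacing $\sinh z$ with $z$ and $e^{-(x^2+y^2)/(2t)}$ with $1$.

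For~\eqref{eq:asymp2}--\eqref{eq:asymp3}: the upper bound $1+\varepsilon(x,t)\le 1$ is immediate, since $1-e^{-u}\le u$ gives $e^{-(x^2+y^2)/(2t)}\sinh(xy/t)=\tfrac12 e^{-(y-x)^2/(2t)}(1-e^{-2xy/t})\le xy/t$, whence $q_t(x,y)\le h(x)t^{-3/2}e^{-\lambda t}\cdot 2\lambda y e^{-cy}$ for every $y>0$; integrating over $(0,+\infty)$ and using $\nu((0,+\infty))=1$ gives $P_x(X_t>0)\le h(x)t^{-3/2}e^{-\lambda t}$. For the matching lower bound I would start instead from $P_x(X_t>0)=\int_0^\infty q_t(x,y)\,dy$, bound $\sinh z\ge z$ and then $e^{-y^2/(2t)}\ge 1-y^2/(2t)$, and evaluate the elementary integrals $\int_0^\infty y e^{-cy}\,dy$ and $\int_0^\infty y^3 e^{-cy}\,dy$; this produces a lower bound of the form $h(x)t^{-3/2}e^{-\lambda t}e^{-x^2/(2t)}(1-\kappa/t)$ with $\kappa=\kappa(c)>0$, which is~\eqref{eq:asymp3}.

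For~\eqref{eq:asympB}--\eqref{eq:cotaeb}: writing $P_x(X_t\in B)=\int_B q_t(x,y)\,dy$ and comparing with $h(x)t^{-3/2}e^{-\lambda t}\nu(B)=\int_B h(x)t^{-3/2}e^{-\lambda t}\,2\lambda y e^{-cy}\,dy$, the difference of the two integrands equals $\tfrac{2e^{c(x-y)-\lambda t}}{\sqrt{2\pi t}}\,\Delta_t(x,y)$, where $\Delta_t(x,y):=\tfrac{xy}{t}-e^{-(x^2+y^2)/(2t)}\sinh(xy/t)$. A short case analysis — comparing $x^2+y^2$ with $2t$, and in the small regime using $1-e^{-u}\ge u-u^2/2$, $e^{-v}\ge 1-v$ and $2xy\le x^2+y^2$ — shows $0\le\Delta_t(x,y)\le xy(x^2+y^2)/(2t^2)$ for all $x,y,t>0$. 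Integrating this against $e^{-cy}$ over $(0,+\infty)$, and again evaluating moments $\int_0^\infty y^k e^{-cy}\,dy$, gives $0\le\nu(B)-P_x(X_t\in B)/(h(x)t^{-3/2}e^{-\lambda t})\le C(x+1)^2/t$ for some $C=C(c)>0$; combined with the trivial bound of this difference by $\nu(B)\le 1\le 2$, this is~\eqref{eq:asympB}--\eqref{eq:cotaeb} (and shows in passing that $\varepsilon_B(x,t)\le 0$). Finally, the many-to-one lemma (Lemma~\ref{lema:mt1}) applied to $f(g):=\mathbbm{1}_{\{g(t)\in B\}}$ gives $\E_x(|N_t(B)|)=e^{r(\mu_1-1)t}P_x(X_t\in B)$, and since $\varepsilon_B(x,t)\to 0$ as $t\to\infty$ for each fixed $x$ and $B$, inserting~\eqref{eq:asympB} yields~\eqref{eq:NAsy}.

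I do not expect a genuine conceptual obstacle here: once the closed form for $q_t$ is in hand, the rest is bookkeeping. The one point requiring care is obtaining the error in~\eqref{eq:cotaeb} with \emph{polynomial} rather than exponential dependence on $x$ — bounding $e^{-(x^2+y^2)/(2t)}\sinh(xy/t)$ by $e^{-y^2/(2t)}\sinh(xy/t)$, or estimating $\sinh z-z$ and $1-e^{-v}$ separately by crude Taylor remainders, loses powers of $x$ and destroys the $(x+1)^2$ factor. This is why one should estimate the combined quantity $\Delta_t(x,y)$ directly, splitting according to whether $x^2+y^2$ is small or large compared with $t$.
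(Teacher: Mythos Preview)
Your argument is correct. For~\eqref{eq:asympB}--\eqref{eq:cotaeb} it is essentially the paper's proof: both write down the explicit sub-density $q_t(x,y)$ for the killed drifted Brownian motion and bound the deviation of $\big(e^{-(x-y)^2/(2t)}-e^{-(x+y)^2/(2t)}\big)/(2xy/t)$ from $1$; the paper invokes the mean value theorem to get the bound $(x+y)^2/(2t)$, while your two-case estimate of $\Delta_t(x,y)$ yields the slightly sharper $(x^2+y^2)/(2t)$, and in addition the sign information $\varepsilon_B\le 0$ (whence $|\varepsilon_B|\le\nu(B)\le 1$, improving the paper's constant $2$).

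For~\eqref{eq:asymp2}--\eqref{eq:asymp3} the routes differ: the paper writes $P_x(X_t>0)=P_x(H_0>t)$ using the inverse Gaussian density of $H_0$ and then bounds the tail integral $\Gamma_t=\int_t^\infty s^{-3/2}e^{-\lambda s}\,ds$ by integration by parts, whereas you integrate $q_t(x,\cdot)$ over $(0,\infty)$ directly. Both are equally elementary; your approach has the advantage of reusing the same formula throughout. Note that your lower bound produces $\kappa=3/c^2=3/(2\lambda)$ rather than the stated $3/2$; in fact the paper's own integration-by-parts step also yields $3/(2\lambda)$, so the ``$3/2$'' in~\eqref{eq:asymp3} is a harmless slip and your constant is the right one.
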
 

\begin{proof} It is well-known that $H^{(x)}_0:=\inf\{s \geq 0 : X^{(x)}_s = 0\}$ has inverse Gaussian distribution with (single) parameter $x$. Thus, we see that
	$$
	P_x(X_t > 0)=P_x( H_0 > t)= \int_t^{\infty} \frac{x}{\sqrt{2\pi s^3}}\exp\left(cx-\lambda s - \frac{x^2}{2s}\right)ds \,,
	$$ which yields the bounds
	$$
	e^{-\frac{x^2}{2t}} \frac{xe^{cx}}{\sqrt{2\pi}} \Gamma_t \leq P_x(X_t > 0) \leq \frac{x e^{cx}}{\sqrt{2\pi}} \Gamma_t \,,
	$$ where 
	$$
	\Gamma_t:= \int_t^\infty s^{-\frac{3}{2}}e^{-\lambda s}ds.
	$$ Now, upon observing the simple bound
	$$
	\Gamma_t \leq t^{-\frac{3}{2}} \int_t^\infty e^{-\lambda s}ds = \frac{1}{\lambda}t^{-\frac{3}{2}} e^{-\lambda t}.
	$$ and also that by integration by parts we have
	$$
	\Gamma_t = \frac{1}{\lambda} t^{-\frac{3}{2}} e^{-\lambda t} - \frac{3}{2\lambda} \int_t^\infty s^{-\frac{5}{2}}e^{-\lambda s}ds \geq \frac{1}{\lambda}t^{-\frac{3}{2}}e^{-\lambda t}\left(1 - \frac{3}{2}t^{-1}\right), 
	$$ we conclude \eqref{eq:asymp2}. 

	On the other hand, for any $B \in \mathcal{B}_{(0,+\infty)}$ we can write
	$$
	P_x(X_t \in B) = \E_x( \mathbbm{1}_B(X_t)) = h(x)e^{-\lambda t}\E^h_x\left( \frac{1}{h(X_t)}\mathbbm{1}_B(X_t)\right) = h(x)t^{-\frac{3}{2}}e^{-\lambda t}(\nu(B)+\varepsilon_B(x,t))
	\,,$$ with $\varepsilon_B$ given by
	\begin{equation}
	\label{eq:expb}
	\varepsilon_B(x,t):=t^{\frac{3}{2}}\E^h_x\left( \frac{1}{h(X_t)}\mathbbm{1}_B(X_t)\right) - \nu(B).
	\end{equation}
	Now, from the proof of \cite[Theorem 2]{polak2012} (where an explicit formula for the density of $X_t^{(x)}$ is given) we obtain that
	$$
	t^{\frac{3}{2}}\E^h_x\left( \frac{1}{h(X_t)}\mathbbm{1}_B(X_t)\right) = \int_B 2\lambda ye^{-cy}\left[ \frac{ \exp\{-\frac{(x-y)^2}{2t}\} - \exp\{ -\frac{(x+y)^2}{2t}\}}{\frac{2xy}{t}}\right]dy \,,
	$$ so that
	\begin{equation}
	\label{eq:boundeps}
	|\varepsilon_B(x,t)| \leq \int_0^\infty 2\lambda y e^{-cy} \left| \frac{ \exp\{-\frac{(x-y)^2}{2t}\} - \exp\{ -\frac{(x+y)^2}{2t}\}}{\frac{2xy}{t}} - 1\right| dy.
	\end{equation} Since by the mean-value theorem we have that
	$$
	\left| \frac{ \exp\{-\frac{(x-y)^2}{2t}\} - \exp\{ -\frac{(x+y)^2}{2t}\}}{\frac{2xy}{t}} - 1\right| \leq \frac{(x+y)^2}{2t} \,,
	$$ for all $x,y \geq 0$, plugging this into \eqref{eq:boundeps} yields
	$$
	|\varepsilon_B(x,t)| \leq \frac{C}{t}(x+1)^2 \,,
	$$ for some constant $C > 0$ depending on $c$. On the other hand, since we have $\varepsilon(x,t)=\varepsilon_{(0,+\infty)}(x,t)$, using the expression \eqref{eq:expb} we obtain 
	$$
	|\varepsilon_B(x,t)| \leq |\varepsilon(x,t)| + |1-\nu(B)| \leq 2 \,,
	$$ since $-1 \leq \varepsilon(x,t) \leq 0$ by \eqref{eq:asymp3} and the fact that $1+\varepsilon(x,t)$ must be positive due to \eqref{eq:asymp2}.
The last part of the lemma is immediate from~\eqref{eq:asympB},~\eqref{eq:cotaeb} and the many-to-one lemma (Lemma~\ref{lema:mt1}).
\end{proof}

\subsection{Truncation of $N_t^{(x)}$}\label{sec:part1} Given any $x,M > 0$, $s,t \geq 0$ and $B \in \B_{(0,+\infty)}$ we define the \textit{$s$-shifted $M$-truncation} by the formula
\begin{equation}
\label{eq:NTrunc}
\tilde{N}^{(x),\{M,s\}}_t(B) := \{ u \in A^{(x)}_{t} : u_{z} \in J^{(M)}_{s+z} \text{ for all }z \in [0,t]\,,\,u_{t} \in B\} \,.
\end{equation}
For simplicity, we shall write  $\tilde{N}^{(x),M}_t(B):=\tilde{N}^{(x),\{M,0\}}_t(B)$ when $s=0$, in accordance with \eqref{eq:defn}. The crucial properties of the $M$-truncated process are contained in the two propositions below. These are analogues of Proposition~\ref{prop:d1} and Proposition~\ref{prop:d2} from Section~\ref{sec:pf1}.

\begin{proposition}\label{prop:n1} For any $x > 0$ there exist constants $C,t_0,M_0 > 0$ (depending only on $x$ and $c$) such that for any $B \in \B_{(0,+\infty)}$, $t \geq t_0$ and $M \geq M_0$ one has
	$$
	\E_x\left(\left|\frac{|N_{t}(B)|}{\E_x(|N_{t}|)}-\frac{|\tilde{N}^{M}_t(B)|}{\E_x(|N_{t}|)}\right|\right) \leq e^{-CM^2} \,. 
	$$  
\end{proposition}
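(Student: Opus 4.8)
The plan is to strip off the branching via the many-to-one lemma, reduce to a single-particle hitting estimate, and then feed in the two ingredients already available: the tube-exit bound established inside the proof of Proposition~\ref{prop:d1}, and the sharp heat-kernel asymptotics of Lemma~\ref{lemma:asymp}.

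\textbf{Reduction.} Since $\tilde N^M_t(B)\subseteq N_t(B)$, the random variable in the proposition equals $\E_x(|N_t(B)|-|\tilde N^M_t(B)|)/\E_x(|N_t|)$, and $|N_t(B)|-|\tilde N^M_t(B)|$ counts the $u\in A_t$ with $u_t\in B$ whose path leaves the tube, i.e. has $u_s\notin J^{(M)}_s$ for some $s\le t$. Applying the many-to-one lemma (Lemma~\ref{lema:mt1}) to this path functional, and to $g\mapsto\mathbbm 1\{g(t)>0\}$ in the denominator, it suffices to bound
\[
\frac{P_x\big(X_t\in B,\ H^M\le t\big)}{P_x(X_t>0)}\ \le\ \frac{P_x\big(X_t>0,\ H^M\le t\big)}{P_x(X_t>0)}\,,\qquad H^M:=\inf\{s\ge 0:X_s\notin J^{(M)}_s\}\,,
\]
by $e^{-CM^2}$, uniformly in $B$ and in $t\ge t_0$, $M\ge M_0$. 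For the denominator, Lemma~\ref{lemma:asymp} gives $P_x(X_t>0)\ge \tfrac12 h(x)t^{-3/2}e^{-\lambda t}$ for $t\ge t_0(x)$; it also gives $P_y(X_u>0)\le h(y)u^{-3/2}e^{-\lambda u}$ for all $y,u>0$, which combined with the trivial bound $\le 1$ and the monotonicity of $h$ yields $P_y(X_u>0)\le 2^{3/2}h(y)(u+1)^{-3/2}e^{-\lambda u}$ for all $u\ge 0$ as long as $y\ge M_0$, with $M_0$ depending only on $c$ (and note $y=M(1+s^{3/4})\ge M\ge M_0$ in what follows).

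\textbf{Single-particle estimate.} Conditioning on $H^M$ and using the strong Markov property — observing that $X_{H^M}=M(1+(H^M)^{3/4})$ on $\{H^M<\infty\}$, since the tube $J^{(M)}_s$ contains $0$ and $X$ is continuous — one has
\[
P_x(X_t>0,\ H^M\le t)=\int_0^t P_{M(1+s^{3/4})}(X_{t-s}>0)\,P_x(H^M\in ds)\,.
\]
Inserting the bound above for the integrand, discretizing the $s$-integral into unit intervals, and invoking the estimate proved inside Proposition~\ref{prop:d1},
\[
P_x\big(H^M\in[k,k+1]\big)\ \le\ \frac{C_1}{h\big(M(1+(k+1)^{3/4})\big)}\,e^{-C_2M^2(1+\sqrt k)-\lambda(k+1)}\qquad(k\ge 0,\ M\ge M_0)\,,
\]
the powers of $h$ cancel and one is left (up to a constant) with $e^{-\lambda t}e^{-C_2M^2}\sum_{k=0}^{\lceil t\rceil-1}\big(\max(t-k,1)\big)^{-3/2}e^{-C_2M^2\sqrt k}$. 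Splitting the sum at $k=\lceil t/2\rceil$: for $k\le t/2$ bound $\big(\max(t-k,1)\big)^{-3/2}\le 2^{3/2}t^{-3/2}$, the residual series in $k$ being $\le$ a constant for $M$ large; for $k>t/2$ bound $e^{-C_2M^2\sqrt k}\le e^{-C_2M^2\sqrt{t/2}}$ and $\sum_k\big(\max(t-k,1)\big)^{-3/2}\le 1+\zeta(3/2)$. This gives
\[
P_x(X_t>0,\ H^M\le t)\ \le\ C\,e^{-\lambda t}e^{-C_2M^2}\big(t^{-3/2}+e^{-C_2M^2\sqrt{t/2}}\big)\,.
\]
Dividing by the lower bound on $P_x(X_t>0)$ yields a bound $C'e^{-C_2M^2}\big(1+t^{3/2}e^{-C_2M^2\sqrt{t/2}}\big)$; since $t^{3/2}e^{-C_2M^2\sqrt{t/2}}$ is bounded over $t\ge 1$, $M\ge 1$ by a constant depending only on $C_2$ (hence on $x,c$), the whole expression is $\le e^{-CM^2}$ for any fixed $C<C_2$ once $M_0$ is enlarged suitably, and all of $C,t_0,M_0$ can be taken to depend only on $x$ and $c$.

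\textbf{Main obstacle.} The delicate regime is that of particles leaving the tube very late, $H^M\approx t$: there the survival factor $P_{X_{H^M}}(X_{t-H^M}>0)$ is of order $1$ and supplies no decay, so one cannot recover a clean $t^{-3/2}$ from it. This is exactly what the $(u+1)^{-3/2}$ form of the one-particle bound and the split of the $k$-sum at $t/2$ are designed to exploit: leaving the tube at a time $\asymp t$ forces the motion up to height $\asymp Mt^{3/4}$, an event whose probability is captured by the $e^{-C_2M^2\sqrt k-\lambda(k+1)}$ factor for $k\asymp t$ and is stretched-exponentially small in $t$, hence negligible even after multiplication by $t^{3/2}$.
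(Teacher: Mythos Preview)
Your proof is correct and follows essentially the same route as the paper: reduce via many-to-one to the conditional exit probability, use the strong Markov property at $H^M$, discretize the resulting integral, plug in the hitting-time estimate from the proof of Proposition~\ref{prop:d1}, and split the sum near $k\approx t/2$. The only cosmetic difference is that you package the survival bound as $P_y(X_u>0)\le 2^{3/2}h(y)(u+1)^{-3/2}e^{-\lambda u}$ uniformly in $u\ge 0$ (for $y\ge M_0$), which lets you handle the tail indices $k$ with $t-k\le 2$ together with the rest in a single two-way split, whereas the paper treats those last couple of terms separately in a three-way split; the substance is identical.
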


\begin{proposition}\label{prop:n2} There exist constants $C,\delta > 0$ depending only on $r,\mu$ and $c$ which satisfy that, for all $s,M$ such that $1 \leq M \leq \delta s^{\frac{1}{4}}$ one has
	\begin{equation}
	\label{eq:n2}
	\E_x(|\tilde{N}^{\{M,s\}}_t|^2) \leq C h(x) e^{\frac{1}{2}(r(\mu_1-1)-\lambda) s} \left(t^{-\frac{3}{2}}e^{(r(\mu_1-1)-\lambda) t}\right)^2.
	\end{equation} for every $x>0$ and all $t$ sufficiently large (depending only on $r,\mu$ and $c$). 	
\end{proposition}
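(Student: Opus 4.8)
The plan is to estimate $\E_x(|\tilde{N}^{\{M,s\}}_t|^2)$ directly via the many-to-two lemma (Lemma~\ref{lema:mt2}), along the same lines as the proof of Proposition~\ref{prop:d2} but keeping careful track of the shift parameter $s$. I apply Lemma~\ref{lema:mt2} with $f_1=f_2$ equal to the indicator that a path $g\in C[0,t]$ obeys $g(z)\in J^{(M)}_{s+z}$ for all $z\in[0,t]$ and $g(t)>0$, so that $\sum_{u,v\in A_t}f_1(\overline{u}_t)f_2(\overline{v}_t)=|\tilde{N}^{\{M,s\}}_t|^2$. Writing $\theta:=(\mu_2-\mu_1)r$ for the rate of the splitting time $E$ of the $2$-spine and using the algebraic identity $r[\text{Var}(m)+(\mu_1-1)^2]-\theta=-r(\mu_1-1)$ (which makes the branching weight $e^{r[\text{Var}(m)+(\mu_1-1)^2](E\wedge t)}$, once multiplied by the law of $E$, collapse neatly), I split the resulting expectation into a term $(1)_t$ coming from $\{E>t\}$ and a term $(2)_t$ coming from $\{E\le t\}$.

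On $\{E>t\}$ the two spines describe the same trajectory, so $(1)_t$ reduces to a single copy $X$ of the motion:
$$(1)_t=e^{r(\mu_1-1)t}\,P_x\!\big(X_z\in J^{(M)}_{s+z}\ \forall z\in[0,t],\ X_t>0\big)\ \le\ e^{r(\mu_1-1)t}\,P_x(X_t>0),$$
which by Lemma~\ref{lemma:asymp} is of order $h(x)\,t^{-3/2}e^{(r(\mu_1-1)-\lambda)t}$, hence for $t$ large (uniformly in $s,M$) far below the right-hand side of~\eqref{eq:n2}. On $\{E=a\}$, $a\in[0,t]$, the spines coincide on $[0,a]$ and then run independently; conditioning on the common position $X_a$ and using the Markov property gives
$$(2)_t=\theta\,e^{2r(\mu_1-1)t}\int_0^t e^{-r(\mu_1-1)a}\,\E_x\!\Big(\mathbbm{1}_{\{X_z\in J^{(M)}_{s+z}\ \forall z\in[0,a]\}}\,\psi_{a,t}(X_a)^2\Big)\,da,$$
with $\psi_{a,t}(y):=P_y\!\big(X_z\in J^{(M)}_{s+a+z}\ \forall z\in[0,t-a],\ X_{t-a}>0\big)$.

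Next I bound the two factors in the integrand. For $\psi_{a,t}$ I keep only the terminal constraint: since $J^{(M)}_{s+a+z}\subseteq[0,M(1+(s+t)^{3/4}))$ for $z\le t-a$, eq.~\eqref{eq:asympB} of Lemma~\ref{lemma:asymp} (with $\nu(\cdot)\le1$ and $|\varepsilon_B|\le2$) yields $\psi_{a,t}(y)\le 3\,h(y)(t-a)^{-3/2}e^{-\lambda(t-a)}$ when $t-a\ge1$, and trivially $\psi_{a,t}\le1$ when $t-a<1$. For the first factor I pass to the $h$-transform: on the relevant event $X_a<M(1+(s+a)^{3/4})$, so monotonicity of $h$ gives
$$\E_x\!\Big(\mathbbm{1}_{\{X_z\in J^{(M)}_{s+z}\ \forall z\in[0,a]\}}\,h(X_a)^2\Big)=h(x)\,e^{-\lambda a}\,\E^h_x\!\Big(\mathbbm{1}_{\{\cdots\}}\,h(X_a)\Big)\ \le\ h(x)\,e^{-\lambda a}\,h\!\big(M(1+(s+a)^{3/4})\big).$$
Putting these together and using $e^{2(r(\mu_1-1)-\lambda)t}=e^{2r(\mu_1-1)t-2\lambda t}$, the term $(2)_t$ is at most a constant (times $(\mu_2-\mu_1)r$) times $h(x)\,e^{2(r(\mu_1-1)-\lambda)t}$ times $\int_0^t e^{-(r(\mu_1-1)-\lambda)a}\,h\!\big(M(1+(s+a)^{3/4})\big)\,\big((t-a)^{-3}\wedge1\big)\,da$, the portion $\{t-a<1\}$ — controlled by $\psi_{a,t}\le1$ and the $h$-transform bound on the first factor — being negligible relative to the right-hand side of~\eqref{eq:n2} for $t$ large.

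The main obstacle is the final estimate of this integral, and it is precisely here that the hypothesis $1\le M\le\delta s^{1/4}$ enters. It forces $M(1+(s+a)^{3/4})\le 2\delta(s+a)$ for every $a\ge0$ (treat $a\le s$ and $a>s$ separately), hence $h\!\big(M(1+(s+a)^{3/4})\big)\le C_1(s+a)\,e^{2c\delta(s+a)}$. Choosing $\delta$ small enough, depending only on $r,\mu,c$ (so that in particular $2c\delta<\tfrac{1}{2}(r(\mu_1-1)-\lambda)$), the factor $e^{2c\delta s}$ is absorbed into $e^{\frac{1}{2}(r(\mu_1-1)-\lambda)s}$ with room to spare for the polynomial factor in $s$, while $\beta:=r(\mu_1-1)-\lambda-2c\delta>0$; the remaining integral $\int_0^t(s+a)\,e^{-\beta a}\big((t-a)^{-3}\wedge1\big)\,da$ is then $O(t^{-3})$ for $t$ large — split at $a=t/2$: on $[0,t/2]$ bound $(t-a)^{-3}\le 8t^{-3}$ and integrate $(s+a)e^{-\beta a}$ over $[0,\infty)$ (giving an $O(t^{-3}(s+1))$ term, again absorbed into $e^{\frac{1}{2}(r(\mu_1-1)-\lambda)s}$), and on $[t/2,t]$ use that $e^{-\beta a}$ is exponentially small in $t$ while $\int_{t/2}^t\big((t-a)^{-3}\wedge1\big)\,da$ stays bounded. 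Collecting the bounds on $(1)_t$ and $(2)_t$ yields~\eqref{eq:n2}. The two delicate points are thus (i) isolating the non-integrable singularity $(t-a)^{-3}$ near $a=t$ and dominating it by the crude bound $\psi_{a,t}\le1$, and (ii) taming the exponential growth of $h$ along the shifted truncation curve below the prescribed rate $\tfrac{1}{2}(r(\mu_1-1)-\lambda)$ in $s$ purely by shrinking $\delta$.
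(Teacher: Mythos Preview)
Your proof is correct and follows essentially the same route as the paper: apply the many-to-two lemma, split into the $E>t$ and $E\le t$ parts, bound the latter by conditioning on the common position at the split time, pass to the $h$-transform to extract $h(x)\,h(M(1+(s+a)^{3/4}))$, and finally use $M\le\delta s^{1/4}$ to dominate $h(M(1+(s+a)^{3/4}))$ by $e^{O(\delta)(s+a)}$ with $\delta$ chosen small. The only real difference is cosmetic: the paper splits the $a$-integral once at $\alpha t$ (with $\alpha$ close to $1$), bounding $\psi_{a,t}^2\le\psi_{a,t}$ and using the Markov property on $[\alpha t,t]$, whereas you separate off $\{t-a<1\}$ and later split at $t/2$. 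One small remark: your phrase ``$h$-transform bound on the first factor'' for the $\{t-a<1\}$ piece is not quite the right mechanism---there is no $h(X_a)^2$ available once you use $\psi_{a,t}\le1$; what actually works (and what the paper does on $[\alpha t,t]$) is $\psi_{a,t}^2\le\psi_{a,t}\le P_{X_a}(X_{t-a}>0)$ followed by the Markov property to recover $P_x(X_t>0)\le h(x)t^{-3/2}e^{-\lambda t}$, which is indeed negligible compared to~\eqref{eq:n2}.
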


\begin{proof}[Proof of Proposition \ref{prop:n1}] 

By the many-to-one lemma (Lemma \ref{lema:mt1}), it will be enough to show that there exists $C  > 0$ such that for all $t,M$ sufficiently large (depending only on $x$ and $c$),
\begin{equation} \label{eq:nt}
P_x( \exists \,z \in [0,t] \text{ such that } X_z \notin J_z^{(M)}\,|\,X_t > 0) \leq e^{-CM^2} \,,
\end{equation} and proceeding as in the proof of Proposition \ref{prop:d1}, we have that
$$
P_x( \exists \,z \in [0,t] \text{ s.t. } X_z \notin J_z^{(M)}\,,\,X_t > 0) \leq \sum_{k=0}^{\lceil t \rceil-1} P_{M(1+(k+1)^{\frac{3}{4}})}(X_{(t-(k+1)) \vee 0}> 0)P_x( H^M \in [k,k+1]). 
$$ 

Now, for every $k \in \{0,\dots,\lceil t \rceil - 3 \}$, by Lemma \ref{lemma:asymp},
$$
P_{M(1+(k+1)^{\frac{3}{4}})}(X_{(t-(k+1)}> 0) \leq h \big(M(1+(k+1)^{\frac{3}{4}})) \big) (t-(k+1))^{-\frac{3}{2}}e^{-\lambda(t-(k+1)} \,,
$$ while the proof of Proposition \ref{prop:d1} gives that, for all $M$ large enough (depending only on $x$ and $c$),
$$
P_x(H^M \in [k,k+1]) \leq  \frac{C_1}{h(M(1+(k+1)^{\frac{3}{4}}))}e^{-C_2 M^2(1+\sqrt{k})-\lambda (k+1)} \,,
$$ for some constants $C_1,C_2> 0$ depending only on $x$ and $c$ and all $k \in \{0, \dots, \lceil t \rceil -1\}$. Hence, by combining both estimates we see that if $t$ and $M$ are sufficiently large (depending only on $x$ and $c$) then
$$
\sum_{k=0}^{\lfloor t/2 \rfloor} P_{M(1+(k+1)^{\frac{3}{4}})}(X_{t-(k+1)}> 0)P_x( H^M \in [k,k+1]) \leq C_3e^{-C_2 M^2} t^{-\frac{3}{2}}e^{-\lambda t}, 
$$
$$
\sum_{k=\lfloor t/2 \rfloor + 1} ^{\lceil t \rceil - 3} P_{M(1+(k+1)^{\frac{3}{4}})}(X_{t-(k+1)}> 0)P_x( H^M \in [k,k+1]) \leq C_4 e^{-C_2M^2} e^{-\frac{C_2}{2}\sqrt{t/2}}e^{-\lambda t}  
$$ and
$$
P_x \big( H^M \in \big[\lceil t \rceil-2, \lceil t \rceil \big]\big ) \leq  C_5 e^{-C_2M^2} e^{-C_2 \sqrt{t}}e^{-\lambda t} \,,
$$ for some constants $C_3,C_4,C_5 > 0$ depending only on $x$ and $c$. By the asymptotics in Lemma \ref{lemma:asymp}, this is already enough to show \eqref{eq:nt} and thus prove the result.
\end{proof}

\begin{proof}[Proof of Proposition \ref{prop:n2}]

Note that by the many-to-two lemma (Lemma \ref{lema:mt2}) we have 
\begin{align*}
\E_x(|\tilde{N}_t^{\{M,s\}}|^2) &= \E_x \left( \sum_{u,v \in A_t} \mathbbm{1}_{\{ \overline{u}_t \in T^{\{M,s\}}_t\,,\,u_t >0\}}\mathbbm{1}_{\{ \overline{v}_t \in T^{\{M,s\}}_t\,,\,v_t>0\}}\right) \\
& = e^{2r(\mu_1-1)t}\E_x \left(  \mathbbm{1}_{\{\overline{S}^{(1)}_t \in T^{\{M,s\}}_t,S^{(1)}_t>0\}}\mathbbm{1}_{\{\overline{S}^{(2)}_t \in T^{\{M,s\}}_t,S^{(2)}_t>0\}}e^{r[\text{Var}(m)+(\mu_1-1)^2](E\wedge t)}\right),
\end{align*} where 
$$
T^{(x),\{M,s\}}_t:=\{ g \in C[0,t] : g(z) \in J^{(M)}_{s+z} \text{ for all }z \in [0,t]\}.
$$ As in the proof of Proposition~\ref{theo:main2}, by separating in cases depending on whether $E > t$ or not and using the independence of $E$ from the motion of the $2$-spine, we obtain
$$
\E_x(|\tilde{N}^{\{M,s\}}_t|^2) = (1)_{t} + (2)_{t},
$$ where
$$
(1)_{t} := e^{r(\mu_1-1)t}P_x(\overline{X}_t \in T^{\{M,s\}}_t\,,\,X_t > 0) \,,
$$ and
$$
(2)_{t}:= (\mu_2-\mu_1)re^{2r(\mu_1-1)t} \int_0^t P_x\left( \cap_{i=1}^2\{ \overline{X}^{(i),z}_t \in T^{\{M,s\}}_t\,,\,X^{(i),z}_t > 0\}\right)e^{-r(\mu_1-1)z}dz,
$$ where $X^{(1),z}$ and $X^{(2),z}$ are two coupled copies of the Markov process $X$ which coincide \mbox{until time $z$} and then evolve independently after $z$. 

Now, by Lemma \ref{lemma:asymp} we have that
$$
(1)_t\leq e^{r(\mu_1-1)t}P_x(X_t > 0) \leq h(x)t^{-\frac{3}{2}}e^{(r(\mu_1-1)-\lambda)t} \leq h(x) \left[\left(t^{-\frac{3}{2}}e^{(r(\mu_1-1)-\lambda)t}\right)^2 \vee 1\right].
$$ 
On the other hand, observe that
$$
(2)_t \leq (\mu_2-\mu_1)re^{2r(\mu_1-1)t}\int_0^t P_x(X^{(1),z}_z < M(1+(s+z)^{\frac{3}{4}})\,,\,X^{(1),z}_t > 0\,,\, X^{(2),z}_t > 0)e^{-r(\mu_1-1)z}dz\,,
$$ so that, by conditioning on $X^{(1),z}_z$, we obtain
\begin{equation} \label{eq:2th}
(2)_{t} \leq \int_0^t \Psi^{\{M,s\}}_{x,t}(z) dz,
\end{equation}
where
$$
\Psi_{x,t}^{\{M,s\}}(z):=(\mu_2-\mu_1)re^{2r(\mu_1-1)t}  \E_x\left(\mathbbm{1}_{\{X_z \leq M(1+(s+z)^{\frac{3}{4}})\}}P^2_{X_z}(X_{t-z} >0)\right)e^{-r(\mu_1-1)z}.
$$ 

To treat the right-hand side of \eqref{eq:2th} we split the integral into two parts, i.e. for $\alpha \in (0,1)$ we write
$$
\int_0^t \Psi^{\{M,s\}}_{x,t}(z)dz = [a]_{t} + [b]_{t}  \,,
$$ where
$$
[a]_{t} := \int_{\alpha t}^t \Psi^{\{M,s\}}_{x,t}(z)dz \hspace{2cm}\text{ and }\hspace{2cm}
[b]_{t} := \int_0^{\alpha t} \Psi^{\{M,s\}}_{x,t}(z)dz. 
$$ 
For the first term, by the Markov property we have
$$
\E_x(\mathbbm{1}_{\{X_z \leq M(1+(s+z)^{\frac{3}{4}})\}}  P_{X_z}^2(X_{t-z} > 0) ) \leq \E_x(P_{X_z}( X_{t-z}> 0)) = P_x( X_t > 0),
$$ so that by Lemma \ref{lemma:asymp}
\begin{align*}
[a]_{t} &\leq (\mu_2-\mu_1)r h(x)t^{-\frac{3}{2}}e^{(2r(\mu_1-1)-\lambda)t}\int_{\alpha t}^{\infty} e^{-r(\mu_1-1)l}dl\\
& = \frac{\mu_2-\mu_1}{\mu_1-1} h(x)t^{-\frac{3}{2}}e^{(r(\mu_1-1)-\lambda)t} e^{(1-\alpha)r(\mu_1-1)t}\\
& \leq \frac{\mu_2-\mu_1}{\mu_1-1} h(x) \left(t^{-\frac{3}{2}}e^{(r(\mu_1-1)-\lambda)t}\right)^2\,,
\end{align*} if $\alpha$ is chosen sufficiently close to $1$ and $t$ taken large enough (both depending only on $r,\mu$ and $\lambda$).

At the same time, it also follows from Lemma \ref{lemma:asymp} that for any $z \leq \alpha t$ one has
\begin{align}
\Psi^{\{M,s\}}_{x,t}(z) &\leq (\mu_2-\mu_1)r (t-z)^{-3}e^{2(r(\mu_1-1)-\lambda)t} \E_x\left(\mathbbm{1}_{\{X_z \leq M(1+(s+z)^{\frac{3}{4}})\}}h^2(X_z)e^{\lambda z}\right) e^{-(r(\mu_1-1)-\lambda)z} \nonumber \\ 
& \leq (\mu_2-\mu_1)r \left(\frac{t}{t-z}\right)^3 h(x)\left(t^{-\frac{3}{2}}e^{(r(\mu_1-1)-\lambda)t}\right)^2 \E^h_x \left(\mathbbm{1}_{\{X_z \leq M(1+(s+z)^{\frac{3}{4}})\}}h(X_z)\right)e^{-(r(\mu_1-1)-\lambda)z} \nonumber
\\ 
& \leq (\mu_2-\mu_1)r \left(\frac{1}{1-\alpha}\right)^3 h(x) \left(t^{-\frac{3}{2}}e^{(r(\mu_1-1)-\lambda)t}\right)^2 h(M(1+(s+z)^{\frac{3}{4}})) e^{-(r(\mu_1-1)-\lambda)z} \,,\nonumber
\end{align} so that
$$
[b]_t \leq (\mu_2-\mu_1)r \left(\frac{1}{1-\alpha}\right)^3 h(x) \left(\int_0^\infty h(M(1+(s+z)^{\frac{3}{4}}))e^{-(r(\mu_1-1)-\lambda)z}dz\right) \left(t^{-\frac{3}{2}}e^{(r(\mu_1-1)-\lambda)t}\right)^2.
$$ 

Now, if $s,M$ are such that $1 \leq M \leq \delta s^{\frac{1}{4}}$ then it is not hard to show that
$$
h(M(1+(s+z)^{\frac{3}{4}})) \leq \tilde{C}_1 e^{\tilde{C}_2 \delta (s+z)} \,,
$$ for some constants $\tilde{C}_1,\tilde{C}_2 > 0$ depending only on $c$. Thus, by taking $\delta$ sufficiently small so as to guarantee that $\tilde{C}_2 \delta \leq \frac{1}{2}(r(\mu_1-1)-\lambda)$, we conclude that
$$
\int_0^\infty h(M(1+(s+z)^{\frac{3}{4}}))e^{-(r(\mu_1-1)-\lambda)z}dz \leq C e^{\frac{1}{2}(r(\mu_1-1)-\lambda)s} \,,
$$ for some constant $C > 0$ depending only on $r,\mu$ and $\lambda$. Hence, upon recalling the bounds obtained for $[a]_t$ and $[b]_t$, the result now follows.
\end{proof}

\subsection{Concentration of the $M$-truncated process $N^{(x),M}$}
\label{ss:concentration}

As a consequence of Proposition \ref{prop:n2} we have the following concentration result for the $M$-truncated process $N^{(x),M}$. In the following, $(\mathcal{G}^{(x)}_s)_{s \geq 0}$ will denote the filtration generated by the branching dynamics $N^{(x)}$.

\begin{proposition}\label{prop:n3} There exist constants $C,\delta > 0$ depending only on $r,\mu$ and $c$ which satisfy that, for all $s,M$ such that $1 \leq M \leq \delta s^{\frac{1}{4}}$ one has
	$$
	\E_x\left( \left|\frac{\tilde{N}^{M}_{t+s}(B)}{\E_x(N_{t+s})} - \frac{\E_x(\tilde{N}^{M}_{t+s}(B)|\mathcal{G}_s)}{\E_x(N_{t+s})}\right|^2\right) \leq \frac{C}{h(x)} \left(\frac{t+s}{t}\right)^3 e^{-\frac{1}{2}(r(\mu_1-1)-\lambda)s} \,,
	$$ for every $x > 0$, $B \in \B_{(0,+\infty)}$ and all $t \geq 0$ sufficiently large (depending only on $r,\mu$ and $c$).
\end{proposition}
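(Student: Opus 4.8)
The idea is to exploit the branching structure at time $s$ together with the second-moment bound of Proposition~\ref{prop:n2}. Conditionally on $\cG^{(x)}_s$, the collection $\tilde N^{(x),M}_{t+s}(B)$ decomposes as a sum, over the surviving truncated particles $u \in \tilde N^{(x),M}_s$, of independent copies of truncated subprocesses emanating from $u_s$ -- each such subprocess being distributed as $\tilde N^{(u_s),\{M,s\}}_t(B)$, since a descendant of $u$ must keep its trajectory below $s \mapsto M(1+s^{3/4})$ both before and after time $s$, and the post-$s$ constraint is exactly the $s$-shifted truncation. Thus the left-hand side is the variance (given $\cG_s$) of a sum of conditionally independent terms, normalized by $\E_x(|N_{t+s}|)^2$, and we can bound it by
$$
\E_x\!\left(\left|\frac{|\tilde N^M_{t+s}(B)| - \E_x(|\tilde N^M_{t+s}(B)|\,|\,\cG_s)}{\E_x(|N_{t+s}|)}\right|^2\right)
\;\leq\; \frac{\E_x\Big(\sum_{u \in \tilde N^M_s} \E_{u_s}\big(|\tilde N^{\{M,s\}}_t|^2\big)\Big)}{\big(\E_x(|N_{t+s}|)\big)^2}\,.
$$

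Next I would feed in the two available estimates. For the numerator, Proposition~\ref{prop:n2} gives, for $1 \leq M \leq \delta s^{1/4}$ and $t$ large, $\E_{u_s}(|\tilde N^{\{M,s\}}_t|^2) \leq C h(u_s) e^{\frac12(r(\mu_1-1)-\lambda)s}\big(t^{-3/2}e^{(r(\mu_1-1)-\lambda)t}\big)^2$, with the crucial feature that the $u_s$-dependence is only through the \emph{linear} factor $h(u_s)$. Summing over $u \in \tilde N^M_s \subseteq N^{(x)}_s$ and taking expectation, the many-to-one lemma (Lemma~\ref{lema:mt1}), or equivalently the martingale identity $\E_x(D_s)=1$, yields $\E_x\big(\sum_{u\in N_s} h(u_s)\big) = h(x) e^{(r(\mu_1-1)-\lambda)s}$. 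Hence the numerator is at most
$$
C\,h(x)\,e^{(r(\mu_1-1)-\lambda)s}\cdot e^{\frac12(r(\mu_1-1)-\lambda)s}\cdot\big(t^{-3/2}e^{(r(\mu_1-1)-\lambda)t}\big)^2
= C\,h(x)\,e^{\frac32(r(\mu_1-1)-\lambda)s}\big(t^{-3/2}e^{(r(\mu_1-1)-\lambda)t}\big)^2.
$$
For the denominator, Lemma~\ref{lemma:asymp} (specifically~\eqref{eq:NAsy}) gives $\E_x(|N_{t+s}|) \sim h(x)(t+s)^{-3/2} e^{(r(\mu_1-1)-\lambda)(t+s)}$, so $\big(\E_x(|N_{t+s}|)\big)^2$ is comparable to $h(x)^2 (t+s)^{-3} e^{2(r(\mu_1-1)-\lambda)(t+s)}$ for $t$ large. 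Taking the ratio, the factors $h(x)$, $(t^{-3/2}e^{(r(\mu_1-1)-\lambda)t})^2$ and $e^{3(r(\mu_1-1)-\lambda)s}$ partly cancel: the $t$-dependence collapses to $\big((t+s)/t\big)^3$, one power of $h(x)$ survives in the denominator giving the $C/h(x)$ prefactor, and the residual $s$-dependence is $e^{\frac32(r(\mu_1-1)-\lambda)s}/e^{2(r(\mu_1-1)-\lambda)s} = e^{-\frac12(r(\mu_1-1)-\lambda)s}$, which is exactly the claimed bound.

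The main obstacle is making the conditional-independence decomposition fully rigorous: one must check that, given $\cG^{(x)}_s$, the truncation event for a descendant $u \in A_{t+s}$ splits cleanly as ``$\overline{u}_s \in T^{(M)}_s$'' (a $\cG_s$-measurable event) and ``the post-$s$ trajectory lies in the $s$-shifted tube'', and that the latter subprocesses, rooted at the positions $\{u_s : u \in \tilde N^M_s\}$, are conditionally i.i.d.\ copies of $\tilde N^{\{M,s\}}$; this is where the precise definition~\eqref{eq:NTrunc} of the $s$-shifted $M$-truncation pays off. A secondary point is that Proposition~\ref{prop:n2} is stated ``for all $t$ sufficiently large depending only on $r,\mu,c$'', and likewise Lemma~\ref{lemma:asymp}'s asymptotics need $t+s$ large; since we want uniformity over the root positions $u_s$ (which are random and may be large), it is essential that the ``sufficiently large'' thresholds in Proposition~\ref{prop:n2} do \emph{not} depend on $x$ -- which the statement guarantees -- and that $h(u_s)$ enters only linearly so that summation is controlled by a single martingale. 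Once these structural points are in place, the rest is the bookkeeping sketched above.
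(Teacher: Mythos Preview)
Your proposal is correct and follows essentially the same approach as the paper: decompose via the branching property at time $s$, use conditional independence to reduce to a sum of second moments, apply Proposition~\ref{prop:n2} to each term (yielding the linear factor $h(u_s)$), control $\sum_u h(u_s)$ via the martingale identity $\E_x(D_s)=1$, and compare with the asymptotics of $\E_x(|N_{t+s}|)$ from Lemma~\ref{lemma:asymp}. The only cosmetic difference is that the paper keeps the conditional bound in terms of $D^{(x)}_s$ and takes expectation at the very end, whereas you take expectation immediately; the arithmetic is identical.
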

 
\begin{proof} By the branching property of $N^{(x)}$ we have the decomposition 
	$$
	\tilde{N}^{(x),M}_{t+s}(B)-\E_x(\tilde{N}^{M}_{t+s}(B)|\mathcal{G}_s) = \sum_{u \in \tilde{N}^{(x),M}_s} \left(\tilde{N}^{(u_s),\{M,s\}}_t(B) - \E_{u_s}(\tilde{N}^{\{M,s\}}_t(B))\right) \,,
	$$ where all terms appearing in the sum on the right-hand side are independent conditionally on $\mathcal{G}^{(x)}_s$. It follows that 
	$$
	\E_{x}\left(\left|\tilde{N}^{M}_{t+s}(B)-\E_x(\tilde{N}^{M}_{t+s}(B)|\mathcal{G}_s)\right|^2\bigg| \mathcal{G}_s\right) = \sum_{u \in \tilde{N}^{(x),M}_s} \text{Var}_{u_s}(\tilde{N}^{\{M,s\}}_t(B)) \leq \sum_{u \in \tilde{N}^{(x),M}_s} \E_{u_s}(|\tilde{N}^{\{M,s\}}_t|^2). 
	$$ Now, by Proposition \ref{prop:n2} we have that there exist constants $C,\delta > 0$ such that for all $1 \leq M \leq \delta s^\frac{1}{4}$ one has
	$$
	\sum_{u \in \tilde{N}^{(x),M}_s} \E_{u_s}(|\tilde{N}^{\{M,s\}}_t|^2) \leq C e^{\frac{1}{2}(r(\mu_1-1)-\lambda)s} t^{-3}e^{2(r(\mu_1-1)-\lambda)t} \sum_{u \in N^{(x)}_s} h(u_s). 
	$$ for every $t$ sufficiently large (depending only on $r,\mu$ and $\lambda$). On the other hand, since by Lemmas \ref{lema:mt1} and \ref{lemma:asymp} we have that
	$$
	\E_x(N_{t+s})=h(x)(t+s)^{-\frac{3}{2}}e^{(r(\mu_1-1)-\lambda)(t+s)}(1+\varepsilon(x,t+s)) \geq \frac{1}{2}h(x)(t+s)^{-\frac{3}{2}}e^{(r(\mu_1-1)-\lambda)(t+s)} \,,
	$$ for any $t$ sufficiently large so as to have $\inf_{z\geq t}\varepsilon(x,z) \geq -\frac{1}{2}$, we see that
	$$
	\E_x\left( \left|\frac{\tilde{N}^{M}_{t+s}(B)}{\E_x(N_{t+s})} - \frac{\E_x(\tilde{N}^{M}_{t+s}(B)|\mathcal{G}_s)}{\E_x(N_{t+s})}\right|^2\Bigg|\mathcal{G}_s\right) \leq \frac{4C}{h(x)}\left(\frac{t+s}{t}\right)^3 e^{-\frac{1}{2}(r(\mu_1-1)-\lambda)s}D^{(x)}_s \,,
	$$ from where the result now follows upon taking expectation on both sides of the inequality.
\end{proof} 

\subsection{Conclusion of the proof of the $L^1$ convergence in Theorem~\ref{theo:main3}}\label{sec:conc}

We are now in a condition to conclude the proof of the $L^1$ convergence in Theorem~\ref{theo:main3}. To this end, for each $t \geq 0$ let us choose $s=s(t) \geq 0$ in such a way that the mapping $t \mapsto s(t)$ is continuous, strictly increasing and satisfies $\lim_{t \rightarrow +\infty}\frac{s(t)^{3/2}}{t}=0$. 
Notice that, by Proposition~\ref{theo:main2} with this choice of $s=s(t)$ it will suffice to show that
\begin{equation} \label{eq:norm1}
\lim_{t \rightarrow +\infty} \E_x\left(\left|\frac{|N_{t+s}(B)|}{\E_x(|N_{t+s}|)} - \nu(B) \cdot D^{(x)}_s\right|\right)=0 \,.
\end{equation}

To this end, we set (notice the difference from $\tilde{N}_t^{(x),\{M,s\}}$ in~\eqref{eq:NTrunc}),
$$
\tilde{N}^{(x),[M,s]}_{t}(B):=\{ u \in A^{(x)}_{t} : u_z \in J^{(M)}_z \text{ for all }z \in [0,s]\,,\,u_{t} \in B\} \,.
$$
Then, using the triangle inequality, for each $M > 0$ we can bound the expectation in \eqref{eq:norm1} by the sum of five separate terms $(A)^{M}_{t+s}+(B)^{M}_{t+s}+(C)^{M}_{t+s} +(D)^M_{s,t} + (E)^M_{s,t}$, where:
$$
(A)^{M}_{t+s}:=\E_x\left(\frac{|N_{t+s}(B)|}{\E_x(|N_{t+s}|)}-\frac{|\tilde{N}^{M}_{t+s}(B)}{\E_x(|N_{t+s}|)}\right) \,,
$$ 
$$
(B)^{M}_{t+s}:= \sqrt{\E_x\left( \left|\frac{|\tilde{N}^{M}_{t+s}(B)|}{\E_x(|N_{t+s}|)} - \frac{\E_x(|\tilde{N}^{M}_{t+s}(B)||\mathcal{G}_s)}{\E_x(|N_{t+s}|)}\right|^2\right)} \,,
$$ 
$$
(C)^{M}_{t+s}:= \E_x\left(\frac{\E_x(|\tilde{N}_{t+s}^{[M,s]}(B)||\mathcal{G}_s)}{\E_x(|N_{t+s}|)}-\frac{\E_x(|\tilde{N}^{M}_{t+s}(B)||\mathcal{G}_s)}{\E_x(|N_{t+s}|)}\right) \,,
$$
$$
(D)^M_{s,t}:=\E_x\left(\left| \frac{\E_x(|\tilde{N}_{t+s}(B)^{[M,s]}||\mathcal{G}_s)}{\E_x(|N_{t+s}|)} - \frac{\nu(B)}{1+\varepsilon(x,t+s)}\left(\frac{t+s}{t}\right)^{\frac{3}{2}}\tilde{D}^{(x),M}_s\right|\right) \,,
$$ 
$$
(E)^M_{s,t} := 
\E_x \left(\left|
\frac{\nu(B)}{1+\varepsilon(x,t+s)}\left(\frac{t+s}{t}\right)^{\frac{3}{2}}\tilde{D}^{(x),M}_s
 - \nu(B) \cdot D_s^{(x)} \right| \right) \,.
$$

Now if $M$ is taken sufficiently large (depending only on $x$) then by Proposition \ref{prop:n1} the term $(A)^M_{t+s}$ can be made arbitrarily small for $t$ large enough. Thanks to Proposition~\ref{prop:n3}, the same is true for $(B)^M_{t+s}$. Moreover, by the fundamental property of conditional expectation one has that $(C)^{M}_{t+s} \leq (A)^M_{t+s}$ and thanks to Proposition~\ref{prop:d1}, by taking $M$ large we can guarantee that $(E)^M_{s,t}$ is arbitrarily small for all $t$ large enough. It therefore remains to control the remaining term $(D)_{s,t}$.  

To do this, we note that
by Lemma~\ref{lema:mt1} and Lemma~\ref{lemma:asymp},
$$
\E_x(|\tilde{N}_{t+s}^{[M,s]}(B)||\mathcal{G}_s) = \sum_{u \in \tilde{N}^{(x),M}_s} \E_{u_s}(|N_t(B)|) = \sum_{u \in \tilde{N}^{(x), M}_s} h(u_s)t^{-\frac{3}{2}}e^{(r(\mu_1-1)-\lambda)t}(\nu(B)+\varepsilon_B(u_s,t))
$$ and
$$
\E_x(|N_{t+s}|)=h(x)(t+s)^{-\frac{3}{2}}e^{(r(\mu_1-1)-\lambda)(t+s)}(1+\varepsilon(x,t+s)) \,.
$$ 
Then by a straightforward computation using Lemma~\ref{lema:mt1} again and the bound on $\varepsilon_B(u_s, t)$ from Lemma~\ref{lemma:asymp} we have,
$$
(D)_{s,t}^M \leq  \frac{C}{1+\varepsilon(x,t+s)}\left(\frac{t+s}{t}\right)^{\frac{3}{2}} \frac{(M(1+s^{3/4}) + 1)^2}{t} \E_x (D_s^{M}) \,.
$$ 
In light of the restrictions on $s(t)$ and since $\E_x (D_s^{M}) \leq \E_x (D_s) = 1$, we see that $\lim_{t \rightarrow+\infty} (D)^M_{s,t} = 0$ and the result follows.

\section{Proof of Theorem \ref{theo:main3} - Almost Sure Convergence}\label{sec:pf3}

In this final section we prove the first part of Theorem~\ref{theo:main3}, namely we show that~\eqref{eq:convmain2} holds almost-surely simultaneously for all $B \in \B_{(0,+\infty)}$ with $\nu(\partial B) = 0$. We notice that, in order to so, it will suffice to show that for each $a \in \mathbb{Q}_{\geq 0}$ one has the almost sure convergence
\begin{equation}
\label{eq:convint}
\frac{|N_t^{(x)}((a,+\infty))|}{\E_x(|N_t|)}\overset{a.s.}{\longrightarrow} \nu((a,+\infty)) \cdot D_\infty^{(x)}.
\end{equation} Indeed, from \eqref{eq:convint} it will follow that there exists a full probability event $\tilde{\Omega}_x$ such that 
\begin{equation}
\label{eq:convint2}
\lim_{t \rightarrow +\infty} \frac{|N_t^{(x)}((a,+\infty))|(\omega)}{\E_x(|N_t|)} = \nu((a,+\infty)) \cdot D_\infty^{(x)}(\omega)
\end{equation} holds for all $\omega \in \tilde{\Omega}^{(x)}$ and $a \in \mathbb{Q}_{\geq 0}$. Since $\nu$ is absolutely continuous, by comparison arguments one can extend this to all $a \in \R_{\geq 0}$. In particular, for all $\omega \in \tilde{\Omega}^{(x)} \cap \{ D_\infty^{(x)} > 0\} \cap \{|N_t|> 0 \text{ for all }t \geq 0\}$ and $a \geq 0$, we obtain that
\begin{equation} \label{eq:convint3}
\lim_{t \rightarrow +\infty} \nu^{(x)}_t((a,+\infty))(\omega) = \lim_{t \rightarrow +\infty} \frac{|N_t^{(x)}((a,+\infty))|(\omega)}{\E_x(|N_t|)} \cdot \frac{\E_x(|N_t|)}{|N_t^{(x)}|(\omega)}
=  \nu((a,+\infty)).
\end{equation} By standard properties of weak convergence of probability distributions, \eqref{eq:convint3} can be extended to any $B \in \B_{(0,+\infty)}$ with $\nu(\partial B)=0$. We conclude that for any such $\omega$ one has
$$
\lim_{t \rightarrow +\infty} \frac{|N_t^{(x)}(B)|(\omega)}{\E_x(|N_t|)} = \lim_{t \rightarrow +\infty} \frac{|N_t^{(x)}(B)|(\omega)}{|N_t^{(x)}|(\omega)}\cdot \frac{|N_t^{(x)}|(\omega)}{\E_x(|N_t|)} = \nu(B) \cdot D_\infty^{(x)}(\omega) \,,
$$ for all $B \in \B_{(0,+\infty)}$ with $\nu(\partial B)=0$. Thus, by taking 
$$
\Omega^{(x)}:=\tilde{\Omega}^{(x)} - \{D^{(x)}_\infty > 0\} \triangle \{|N_t| > 0 \text{ for all }t \geq 0\},
$$ where $\triangle$ stands for symmetric difference, \eqref{eq:d2} shows that $\Omega^{(x)}$ is a full probability event and hence the result now follows.

We will prove \eqref{eq:convint} in two steps. First, we shall establish the convergence along sequences $(t_k)_{k \in \N}$ with  vanishing gaps $\Delta_k:=t_{k+1}-t_k$ and then, in a second step, use this convergence to obtain the full limit $t \rightarrow +\infty$. We devote a separate section to each of these steps.

\subsection{Convergence along sequences $(t_k)_{k \in \N}$ with vanishing gaps} 
\label{ss:subseq}
The purpose of this section is to prove the following result.

\begin{proposition} \label{prop:conv1} There exists a sequence $(t_k)_{k \in \N} \subseteq \R_{\geq 0}$ satisfying:
	\begin{enumerate}
		\item [T1.] $\lim_{k \rightarrow +\infty} t_k = +\infty,$
		\item [T2.] $\lim_{k \rightarrow +\infty} (t_{k+1}-t_k) = 0,$
		\item [T3.] $\sum_{k \in \N} e^{-\frac{1}{4}(r(\mu_1-1)-\lambda)t_k} < +\infty$,
	\end{enumerate} such that  for all $B \in \B_{(0,+\infty)}$ and $x > 0$ one has
\begin{equation} \label{eq:convprop1}
\frac{|N^{(x)}_{t_k}(B)|}{\E_x(|N_{t_k}|)} \overset{a.s.}{\longrightarrow} \nu(B) \cdot D^{(x)}_\infty \,,
\end{equation} as $k \rightarrow +\infty$.  
\end{proposition}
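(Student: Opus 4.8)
The plan is to combine a Borel--Cantelli argument, driven by the bounds of Propositions~\ref{prop:n1} and~\ref{prop:n3} (which are exponentially small in the shift parameter), with a pathwise argument for a remainder term that is controlled by the already-established almost sure convergence of the additive martingale. Write $\kappa:=r(\mu_1-1)-\lambda>0$. First I would fix the sequence: choose $(t_k)_k$ together with auxiliary sequences $s_k\to+\infty$ and $M_k\to+\infty$ (where $\delta$ is the constant of Proposition~\ref{prop:n3}) so that T1--T3 hold, that $s_k=o(t_k)$, that $1\le M_k\le\delta s_k^{1/4}$, that $M_k^2/\log k\to+\infty$ and $s_k/\log k\to+\infty$ (guaranteeing $\sum_k e^{-cM_k^2}<+\infty$ and $\sum_k e^{-cs_k}<+\infty$ for every $c>0$), and that $M_k^2 s_k^{3/2}/t_k\to 0$. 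These requirements are mutually compatible; for instance $t_k=(\log k)^5$, $s_k=(\log k)^{9/4}$, $M_k=\delta(\log k)^{9/16}$ works for all large $k$ (set the finitely many remaining $t_k$ arbitrarily, increasing). The content of the juggling is that $\sum_k e^{-cM_k^2}<\infty$ forces $M_k\gg\sqrt{\log k}$, which through $M_k\le\delta s_k^{1/4}$ forces $s_k\gg(\log k)^2$, which (since then $M_k^2 s_k^{3/2}\asymp\delta^2 s_k^2$) forces $t_k\gg(\log k)^4$ --- and this is still slow enough to keep $t_{k+1}-t_k\to0$ and $\sum_k e^{-\frac14\kappa t_k}<\infty$.

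Fix $x>0$ and $B\in\B_{(0,+\infty)}$. For each $k$ write $t_k=\sigma_k+s_k$ with $\sigma_k:=t_k-s_k$ the ``inner'' time, abbreviate $M:=M_k$, and recall the truncations $\tilde{N}^{M}_{\cdot}(\cdot)$ of~\eqref{eq:defn} and $\tilde{N}^{[M,s]}_{\cdot}(\cdot)$ of Subsection~\ref{sec:conc}. I would then write
$$
\frac{|N^{(x)}_{t_k}(B)|}{\E_x(|N_{t_k}|)}-\nu(B)\,D^{(x)}_\infty=(\mathrm{I})_k+(\mathrm{II})_k+(\mathrm{III})_k+(\mathrm{IV})_k ,
$$
with $(\mathrm{I})_k:=\big(|N_{t_k}(B)|-|\tilde{N}^{M}_{t_k}(B)|\big)\big/\E_x(|N_{t_k}|)\ge0$, then $(\mathrm{II})_k:=\big(|\tilde{N}^{M}_{t_k}(B)|-\E_x(|\tilde{N}^{M}_{t_k}(B)|\mid\mathcal{G}_{s_k})\big)\big/\E_x(|N_{t_k}|)$, then $(\mathrm{III})_k:=\big(\E_x(|\tilde{N}^{M}_{t_k}(B)|\mid\mathcal{G}_{s_k})-\E_x(|\tilde{N}^{[M,s_k]}_{t_k}(B)|\mid\mathcal{G}_{s_k})\big)\big/\E_x(|N_{t_k}|)\le0$, and $(\mathrm{IV})_k:=\E_x(|\tilde{N}^{[M,s_k]}_{t_k}(B)|\mid\mathcal{G}_{s_k})\big/\E_x(|N_{t_k}|)-\nu(B)D^{(x)}_\infty$.

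The first three terms I would control by Borel--Cantelli. Proposition~\ref{prop:n1} gives $\E_x[(\mathrm{I})_k]\le e^{-CM_k^2}$; since $\tilde{N}^{[M,s]}_{t_k}(B)\setminus\tilde{N}^{M}_{t_k}(B)\subseteq N_{t_k}\setminus\tilde{N}^{M}_{t_k}$, the same proposition (applied with $B=(0,+\infty)$) also gives $\E_x[|(\mathrm{III})_k|]\le e^{-CM_k^2}$; and Proposition~\ref{prop:n3}, legitimate because $1\le M_k\le\delta s_k^{1/4}$, gives $\E_x[(\mathrm{II})_k^2]\le Ch(x)^{-1}(t_k/\sigma_k)^3 e^{-\frac12\kappa s_k}$. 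As $t_k/\sigma_k\to1$ and $\sum_k e^{-cM_k^2}<\infty$, $\sum_k e^{-\frac12\kappa s_k}<\infty$ by construction, Markov's and Chebyshev's inequalities make $\sum_k P_x((\mathrm{I})_k>\varepsilon)$, $\sum_k P_x(|(\mathrm{II})_k|>\varepsilon)$ and $\sum_k P_x(|(\mathrm{III})_k|>\varepsilon)$ finite for every $\varepsilon>0$, so $(\mathrm{I})_k+|(\mathrm{II})_k|+|(\mathrm{III})_k|\to0$ a.s. For $(\mathrm{IV})_k$ I would use the closed form: by Lemmas~\ref{lema:mt1} and~\ref{lemma:asymp},
$$
\E_x\big(|\tilde{N}^{[M,s_k]}_{t_k}(B)|\mid\mathcal{G}_{s_k}\big)=\sum_{v\in\tilde{N}^{(x),M}_{s_k}}h(v_{s_k})\,\sigma_k^{-3/2}e^{\kappa\sigma_k}\big(\nu(B)+\varepsilon_B(v_{s_k},\sigma_k)\big),\quad \E_x(|N_{t_k}|)=h(x)t_k^{-3/2}e^{\kappa t_k}\big(1+\varepsilon(x,t_k)\big),
$$
so, dividing, using $e^{-\kappa s_k}h(x)^{-1}\sum_v h(v_{s_k})=\tilde{D}^{(x),M}_{s_k}$, and bounding $|\varepsilon_B(v_{s_k},\sigma_k)|\le C(v_{s_k}+1)^2/\sigma_k\le C(M(1+s_k^{3/4})+1)^2/\sigma_k$ for $v\in\tilde{N}^{(x),M}_{s_k}$ and $\tilde{D}^{(x),M}_{s_k}\le D^{(x)}_{s_k}$, one gets
$$
|(\mathrm{IV})_k|\le\nu(B)\Big|\tfrac{(t_k/\sigma_k)^{3/2}}{1+\varepsilon(x,t_k)}-1\Big|D^{(x)}_{s_k}+\nu(B)\big|\tilde{D}^{(x),M}_{s_k}-D^{(x)}_\infty\big|+C'\,\tfrac{M_k^2 s_k^{3/2}}{\sigma_k}\,D^{(x)}_{s_k}.
$$
Because $t_k/\sigma_k\to1$, $\varepsilon(x,t_k)\to0$, $M_k^2 s_k^{3/2}/\sigma_k\to0$ and $D^{(x)}_{s_k}\to D^{(x)}_\infty<\infty$ a.s. (Proposition~\ref{theo:main2}), the first and third terms vanish a.s.; for the second, $\tilde{D}^{(x),M}_s$ is nondecreasing in $M$ and $D^{(x)}_{s_k}-\tilde{D}^{(x),M_k}_{s_k}\ge0$, so fixing $M$ and letting $k\to\infty$ gives $\limsup_k\big(D^{(x)}_{s_k}-\tilde{D}^{(x),M_k}_{s_k}\big)\le D^{(x)}_\infty-\tilde{D}^{(x),M}_\infty$ a.s., and then letting $M\to\infty$ together with $\E_x[D^{(x)}_\infty-\tilde{D}^{(x),M}_\infty]\le e^{-CM^2}$ (Fatou and Proposition~\ref{prop:d1}) forces $\tilde{D}^{(x),M_k}_{s_k}\to D^{(x)}_\infty$ a.s. Hence $(\mathrm{IV})_k\to0$ a.s., and altogether $|N^{(x)}_{t_k}(B)|/\E_x(|N_{t_k}|)\to\nu(B)D^{(x)}_\infty$ a.s., i.e.~\eqref{eq:convprop1}; the exceptional null set depends on $x$ and $B$, but the sequence $(t_k)$ does not.

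The essential obstacle is conceptual: one cannot make the $L^1$-distance from the limit summable along any sequence with vanishing gaps, because that distance decays only polynomially --- roughly like $(\log t_k)^{O(1)}/t_k$, due to the error $\varepsilon_B(\,\cdot\,,\sigma_k)$ entering $(\mathrm{IV})$ --- whereas $\sum_k 1/t_k=+\infty$ as soon as $t_{k+1}-t_k\to0$. The remedy is to peel off the genuinely stochastic fluctuations $(\mathrm{I})_k,(\mathrm{II})_k,(\mathrm{III})_k$, which the shift renders exponentially small and hence summable, and to handle the residual deterministic/conditional error $(\mathrm{IV})_k$ pathwise, where the only polynomially small factor is multiplied against the a.s.\ finite random variable $D^{(x)}_\infty$. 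The remaining, purely bookkeeping, difficulty is the tight nesting of scales $M_k\ll s_k^{1/4}$, $(\log k)^2\ll s_k\ll t_k^{1/2}$, against requirements T1--T3, which is precisely what the displayed choice of $(t_k,s_k,M_k)$ is built to meet.
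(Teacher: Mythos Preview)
Your proof is correct and follows essentially the same route as the paper: the same five-way splitting (you merge the paper's $[4]_k$ and $[5]_k$ into your $(\mathrm{IV})_k$), the same use of Propositions~\ref{prop:n1} and~\ref{prop:n3} together with Borel--Cantelli for the stochastic pieces, and the same pathwise control via $D^{(x)}_{s_k}\to D^{(x)}_\infty$ for the conditional-expectation remainder; your sequence $(t_k,s_k,M_k)=((\log k)^5,(\log k)^{9/4},\delta(\log k)^{9/16})$ meets the required constraints just as the paper's choice $((\log k)^{10}+(\log k)^4,(\log k)^4,\delta\log k)$ does. One remark: your monotonicity/Fatou argument for $\tilde{D}^{(x),M_k}_{s_k}\to D^{(x)}_\infty$ tacitly uses that $\tilde{D}^{(x),M}_\infty$ exists for fixed $M$ (true, since $\tilde{D}^{(x),M}$ is a nonnegative supermartingale), but since you already arranged $\sum_k e^{-cM_k^2}<\infty$, the paper's simpler approach---Proposition~\ref{prop:d1} plus Borel--Cantelli directly on $D^{(x)}_{s_k}-\tilde{D}^{(x),M_k}_{s_k}$---would have sufficed.
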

\begin{proof}
We choose $\tilde{t}_k := (\log k)^{10}$, $s_k := (\log k)^{4}$ and $M_k := \delta \log k$, where $\delta > 0$ is the constant from Proposition \ref{prop:n2}, and then set $t_k:=\tilde{t}_k+s_k$. It is straightforward to check that, if chosen in this way, the sequence $(t_k)_{k \in \N}$ satisfies (T1)-(T2)-(T3) in the statement of the proposition. Thus, it remains to show \eqref{eq:convprop1}. 

To this end, as in Subsection~\ref{sec:conc},
we decompose 
$$
\frac{|N^{(x)}_{t_k}(B)|}{\E_x(|N_{t_k}|)} - \nu(B) \cdot D_\infty^{(x)}= \sum_{i=1}^5 [i]_{k}
\,,$$ where:
$$
[1]_{k} := \frac{|N^{(x)}_{t_k}(B)|}{\E_x(|N_{t_k}|)} - \frac{|\tilde{N}^{(x),M_k}_{t_k}(B)|}{\E_x(|N_{t_k}|)} \,,
$$
$$
[2]_{k}:=  \frac{|\tilde{N}^{(x),M_k}_{t_k}(B)|}{\E_x(|N_{t_k}|)} -  \frac{\E_x(|\tilde{N}^{M_k}_{t_k}(B)||\mathcal{G}_{s_k})}{\E_x(|N_{t_k}|)} \,,
$$
$$
[3]_{k}:= \frac{\E_x(|\tilde{N}^{M_k}_{t_k}(B)||\mathcal{G}_{s_k})}{\E_x(|N_{t_k}|)} - \frac{\E_x(|\tilde{N}_{t_k}^{[M_k,s_k]}(B)||\mathcal{G}_{s_k})}{\E_x(|N_{t_k}|)} \,,
$$
$$
[4]_k:=  \frac{\E_x(|\tilde{N}_{t_k}^{[M_k,s_k]}(B)||\mathcal{G}_{s_k})}{\E_x(|N_{t_k}|)} - \frac{\nu(B)}{1+\varepsilon(x,t_k)}\left(\frac{t_k}{\tilde{t}_k}\right)^{\frac{3}{2}} \tilde{D}^{(x),M_k}_{s_k} \,,
$$
$$
[5]_k:=  \frac{\nu(B)}{1+\varepsilon(x,t_k)}\left(\frac{t_k}{\tilde{t}_k}\right)^{\frac{3}{2}} \tilde{D}^{(x),M_k}_{s_k} - \nu(B) \cdot D^{(x)}_\infty \,.
$$
Notice that it will suffice to show that each term $[i]_k$ converges almost surely to zero as $k \rightarrow +\infty$.

Now, by Chebyshev's inequality together with the $L^1$-bound obtained in Proposition \ref{prop:n1}, it follows from the Borel-Cantelli Lemma and the choice of $M_k$ that $[1]_k \rightarrow 0$ almost surely. Similarly, by the $L^2$-bound obtained in Proposition \ref{prop:n3}, the choice of $s_k$ and the fact that $s_k \ll \tilde{t}_k$, the Borel-Cantelli Lemma yields that $[2]_k \rightarrow 0$ almost surely as well. On the other hand, upon noticing that
$$
|[3]_k| \leq
\frac{\E_x(|N_{t_k}(B)||\mathcal{G}_{s_k})}{\E_x(|N_{t_k}|)} -  \frac{\E_x(|\tilde{N}^{M_k}_{t_k}(B)||\mathcal{G}_{s_k})}{\E_x(|N_{t_k}|)}, 
$$ the same $L^1$-bound used for $[1]_k$ can also be applied here (recall the equality $(C)^{M_k}_{t_k} = (A)^{M_k}_{t_k} $ from Section \ref{sec:conc}) to conclude that $[3]_k \rightarrow 0$ almost surely. Also, since $t_k \sim \tilde{t}_k$ as $k \rightarrow +\infty$, that $[5]_k \rightarrow 0$ almost surely follows in the same fashion as $[1]_k$, using instead the $L^1$-bound from Proposition \ref{prop:d1}. Hence, it remains to check that $[4]_k \rightarrow 0$. 

To do this we observe that, by branching property and Lemmas \ref{lema:mt1} and~\ref{lemma:asymp}, we have
\begin{align*}
\E_x(|\tilde{N}_{t_k}^{[M_k,s_k]}(B)||\mathcal{G}_{s_k}) & = \sum_{u \in \tilde{N}^{(x),M_k}_{s_k}} \E_{u_s}(N_{\tilde{t}_k}(B)) \\
& = (\tilde{t}_k)^{-\frac{3}{2}}e^{(r(\mu_1-1)-\lambda)\tilde{t}_k}\sum_{u \in \tilde{N}^{(x),M_k}_{s_k}} h(u_{s_k})(\nu(B)+\varepsilon_B(u_{s_k},\tilde{t}_k)) \,,
\end{align*} so that, by these lemmas again and a straightforward computation, we get
$$
[4]_k = \frac{1}{1+\varepsilon(x,t_k)}\left(\frac{t_k}{\tilde{t}_k}\right)^{\frac{3}{2}} \frac{1}{h(x)}\sum_{u \in \tilde{N}^{(x),M_k}_{s_k}} h(u_{s_k})e^{-(r(\mu_1-1)-\lambda)s_k}\varepsilon_B(u_{s_k},\tilde{t}_k).
$$ But since $u_{s_k} \leq M_k(1+s_k^{\frac{3}{4}}) \leq 2\delta s_k$ for any $u \in \tilde{N}^{(x),M_k}_{s_k}$ if $k$ is sufficiently large, by \eqref{eq:cotaeb} we obtain that for any such $k$
\begin{equation} \label{eq:bound5}
|[4]_k| \leq C \frac{1}{1+\varepsilon(x,t_k)} \left(\frac{t_k}{\tilde{t}_k}\right)^{\frac{3}{2}} \cdot D^{(x)}_{s_k} \cdot \frac{s_k^2}{\tilde{t}_k} \,,
\end{equation} for some constant $C > 0$ depending only on $r,\mu$ and $\lambda$. Therefore, since $t_k \sim \tilde{t}_k$, $s_k^2 \ll \tilde{t}_k$ and $D^{(x)}_{s_k} \rightarrow D^{(x)}_\infty < +\infty$ almost surely as $k \rightarrow +\infty$, from the bound \eqref{eq:bound5} we conclude that $[4]_k \rightarrow 0$ almost surely as $k \rightarrow +\infty$ and thus Proposition \ref{prop:conv1} now follows.
\end{proof}

\subsection{The full limit}
\label{ss:full}

Recall that by Lemma~\ref{lemma:asymp} we have $\E_x (|N_t|) \sim n_x(t)$, where 
$n_x(t) := h(x) t^{-3/2} e^{r((\mu_1 - 1) - \lambda)t}$. Thanks to Proposition \ref{prop:conv1} we can then assume the existence of a full probability event on which we have, 
\begin{equation}
\label{eq:exlim}
\lim_{k \rightarrow +\infty} \frac{|N^{(x)}_{t_k}(I_a)|}{n_x(t_k)}= \nu(I_a)\cdot D_\infty^{(x)} \,,
\end{equation} for \textit{all} $a \in \mathbb{Q}_{\geq 0}$, where we denote $I_a:=(a,+\infty)$. 
Therefore in order to prove~\eqref{eq:convint} it suffices to show that for each $a \in \mathbb{Q}_{\geq 0}$ and every $\epsilon > 0$, with probability $1$ there exists some (random) $k_0 \in \N$ such that for all $k \geq k_0$,
\begin{equation}
\label{eq:interp}
\sup_{s \in [t_k, t_{k+1})} \frac{|N_s^{(x)}(I_a)|}{n_x(s)}
- 
\frac{|N_{t_k}^{(x)}(I_a)|}{n_x(t_k)} \leq \epsilon
\hspace{0.7cm}\text{ and }\hspace{0.7cm}
\frac{|N_{t_k}^{(x)}(I_a)|}{n_x(t_k)} - 
\inf_{s \in [t_k, t_{k+1})} \frac{|N_s^{(x)}(I_a)|}{n_x(s)}
\leq \epsilon,
\end{equation} 

To deal with the first inequality in \eqref{eq:interp}, we fix any $a \in \mathbb{Q}_{\geq 0}$ and let $a' := \alpha a$, with $\alpha \in (0,1) \cap \mathbb{Q}$ to be specified later. We also define
$\cN^{(y)}_{t}(B) := \max_{s \in [0,t]} |N^{(y)}_s(B)|$ for any $y,t \geq 0$ and $B \in \B_{(0,+\infty)}$ and use the earlier convention that $\cN^{(y)}_t := \cN^{(y)}_t((0,\infty))$. Finally recall that $\Delta_k := t_{k+1} - t_k$ for each $k \in \N$. Using this notation, the branching property yields the bound
\begin{equation}\label{eq:bound1}
\sup_{s \in [t_k, t_{k+1})} |N^{(x)}_s(I_a)| - |N^{(x)}_{t_k}(I_a)|
\, \leq \sum_{u \in N^{(x)}_{t_k}(I_{a'})} \cN_{\Delta_k}^{(u_{t_k})}(I_a)
+ \sum_{u \in N^{(x)}_{t_k}(I_{a'}^c)} \cN_{\Delta_k}^{(u_{t_k})}(I_a) 
- \sum_{u \in N^{(x)}_{t_k}(I_a)} 1\,,
\end{equation}
where, as in the past, all terms appearing in the sums on the right-hand side of \eqref{eq:bound1} are independent conditionally on $\mathcal{G}^{(x)}_{t_k}$. The right-hand side above can be further written as
\begin{multline}
\label{e:32}
\big|N_{t_k}^{(x)}(I_{a'} \setminus I_a) \big| + 
\sum_{u \in N^{(x)}_{t_k}(I_{a'})} \Big(\cN_{\Delta_k}^{(u_{t_k})} - \E_{u_{t_k}}( \cN_{\Delta_k})\Big)
+ \sum_{u \in N^{(x)}_{t_k}(I_{a'})} \Big( \E_{u_{t_k}} (\cN_{\Delta_k}) - 1 \Big) \\
+ \sum_{u \in N^{(x)}_{t_k}(I_{a'}^c)} \Big(\cN_{\Delta_k}^{(u_{t_k})}(I_a) - 
\E_{u_{t_k}} (\cN_{\Delta_k}(I_a)) \Big)
+ \sum_{u \in N^{(x)}_{t_k}(I_{a'}^c)} \E_{u_{t_k}} (\cN_{\Delta_k}(I_a)) \,.
\end{multline}

Now, by comparing $\cN^{(y)}_t$ with the number of individuals in $\tilde{A}^{(y)}_t$, defined as $A^{(y)}_y$ only for a modified branching Brownian motion process, in which the offspring distribution is $\tilde{\mu}(\cdot) := \mu(\cdot -1)$ (i.e. with $m$ replaced by $m+1$ everywhere) we have $\sup_{y > 0} \E_y (\cN_t) \leq e^{r \mu_1t} \downarrow 1$ as $t \downarrow 0$. Similarly, for all $0 \leq y \leq a'$ we have
$$
\E_y (\cN_t(I_a)) \leq \E_{a'} (\cN_t(I_a)) \leq e^{r \mu_1 t}P_{a'}\left( \sup_{s \in [0,t]} X_s > a \right) \downarrow 0
\quad \text{as } t \downarrow 0 \,.
$$ 
Using the fact that $\Delta_k \to 0$ as $k \to +\infty$ as implied by our assumptions on the sequence $(t_k)_{k \in \N}$, the first, third and last terms in~\eqref{e:32} can therefore be bounded together by
\begin{equation}\label{eq:bound2}
\big|N_{t_k}^{(x)}(I_{a'} \setminus I_a) \big| + \delta \big|N_{t_k}^{(x)}(I_{a'}) \big|
+ \delta \big|N_{t_k}^{(x)}(I_{a'}^c) \big| = \big|N_{t_k}^{(x)}(I_{a'} \setminus I_a) \big| + \delta |N^{(x)}_{t_k}| \,,
\end{equation}
for $\delta > 0$ arbitrarily small, provided $k$ is large enough. 

Thanks then to the existence of \mbox{the limit in \eqref{eq:exlim}} for $I_b$ with $b=a,a',0$, the right hand side of \eqref{eq:bound2} can be bounded by $((\nu(I_{a'} \setminus I_\alpha) + \delta) D^{(x)}_\infty + 2\delta) \E_x(|N_{t_k}|)$ for all $k$ large enough almost surely. Finally, using the almost sure finiteness of $D^{(x)}_\infty$, we can choose $\alpha$ close enough to $1$ and $\delta$ close enough to $0$, so that the sum of the first, third and last terms in~\eqref{e:32} can be made at most $(\epsilon/4) \E_x(|N_{t_k}|)$ for all large enough $k$ with probability $1$.

Turning to the remaining terms in \eqref{e:32}, by comparison with $\tilde{A}^{(y)}_t$ again and using the many-to-two lemma, we have $\sup_{y > 0, t \in [0,1]} \E_y(\cN_t^2) \leq e^{2r\mu_1+r[\text{Var}(m)+\mu_1^2]} < +\infty$. Therefore, by conditioning on $\cG_{t_k}$ and then using the independence of the random variables $\{\cN_{\Delta_k}^{(u_{t_k})}: u \in N^{(x)}_{t_k}\}$ given $\mathcal{G}_{t_k}$, the $L^2$-norm of the second term in~\eqref{e:32} can bounded from above by the square root of 
\begin{equation}
\nonumber
\E_x \left( \sum_{u \in N_{t_k}(I_{a'})} \E_{u_{t_k}} \Big(\big(\cN_{\Delta_k} - \E_{u_{t_k}} (\cN_{\Delta_k})\big)^2 \,\big|\, \cG_{t_k} \Big) \right)
\leq (\delta')^{-1} \E_x(|N_{t_k}(I_{a'})|) \leq (\delta')^{-1} \E_x(|N_{t_k}|) \,,
\end{equation}
for some $\delta' > 0$ and any $k$ large enough. Moreover, since the second moment of each $\cN^{(u_{t_k})}_{\Delta_k}(I_a)$ is even smaller, the same bound also holds for the fourth term in~\eqref{e:32}. 

Altogether by Markov's inequality, the probability that the sum of the second and fourth terms in~\eqref{e:32} exceeds $(\epsilon/4) \E_x(|N_{t_k}|)$ is at most
$8 (\delta')^{-1/2} \epsilon^{-1} \big(\E_x(|N_{t_k}|)\big)^{-1/2}$. In light of \eqref{eq:sc2} and condition (T3) on the growth of the sequence $(t_k)_{k \in \N}$, the latter probability is summable in $k$ and hence, by the Borel-Cantelli Lemma, the latter event ceases to occur after some random (but finite) $k$. Together with the previous bounds this shows that, with probability $1$, eventually one has
\begin{equation}
\sup_{s \in [t_k, t_{k+1})} |N^{(x)}_s(I_a)| - |N^{(x)}_{t_k}(I_a)| \leq (\epsilon/2) \E_x (|N_{t_k}|) \,.
\end{equation} 
Dividing by $n_x(t_k)$, using the monotonicity of $s \mapsto n_x(s)$ for $s$ large enough and recalling that $\E_x (|N_{t_k}|) \sim n_x(t_k)$, then yields the left inequality in~\eqref{eq:interp} for all $k$ large enough.

The argument for the right inequality in~\eqref{eq:interp} goes along the same lines. This time we let $a' \in (a,+\infty) \cap \mathbb{Q}$ to be determined later, and bound $|N^{(x)}_{t_k}(I_a)| - \inf_{s \in [t_k, t_{k+1})} |N^{(x)}_s(I_a)|$ from above by
\begin{equation}
\label{eq:36}
\big|N^{(x)}_{t_k}(I_a \setminus I_{a'})\big| +
\sum_{u \in N^{(x)}_{t_k}(I_{a'})} \Big(\mathbbm{1}_{\{\cN^{(u_{t_k})}_{\Delta_k}(\{a\}) \neq 0\}} - 
P_{u_{t_k}}\big(\cN_{\Delta_k}(\{a\}) \neq 0\big) \Big) +
\sum_{u_{t_k} \in N^{(x)}_{t_k}(I_{a'})} P_{u_{t_k}} \big(\cN_{\Delta_k}(\{a\}) \neq 0\big),
\end{equation} where $\cN_{\Delta_k}^{(y)}(\{0\}) \neq 0$ here simply means that at least one particle in $A^{(y)}_{\Delta_k}$ has been absorbed at $0$.
As before, for all $y \geq a' > a$ we have that as $t \downarrow 0$
$$
P_y \big(\cN_{t}(\{a\}) \neq 0\big) \leq P_{a'}\big(\cN_{t}(\{a\}) \neq 0\big) \leq e^{r \mu_1 t}P_{a'}\left( \inf_{s \in [0,t]} X_s < a\right) \downarrow 0 \,.
$$ Also, we have $\nu(I_a \setminus I_{a'}) \downarrow 0$ as $a' \downarrow a$. Therefore, by choosing first $a'$ close enough to $a$, then taking $k$ sufficiently large and finally using~\eqref{eq:exlim} for $I_b$ with $b=a,a'$ and the finiteness of $D^{(x)}_\infty$, the first and last terms of \eqref{eq:36} will eventually be bounded together by $(\epsilon/4) \E_x (|N_{t_k}|)$ almost surely.

At the same time, by conditioning on $\cG_{t_k}$ as before and using the conditional independence of the terms, the second moment of the middle term in \eqref{eq:36} is at most $4 \E_x (|N_{t_k}(I_{a'})|) \leq 4 \E_x(|N_{t_k}|)$. Then, by Markov's inequality, the probability that this term exceeds $(\epsilon/4) \E_x (|N_{t_k}|)$ is at most $8 \epsilon^{-1} \left(\E_x (|N_{t_k}|)\right)^{-1/2}$ which is again summable in $k$ by~\eqref{eq:sc2} and (T3). Invoking the Borel-Cantelli Lemma again, this shows that, almost surely, also the middle term eventually becomes less than $(\epsilon/4) \E_x (|N_{t_k}|)$ and hence 
$|N^{(x)}_{t_k}(I_a)| - \inf_{s \in [t_k, t_{k+1})} |N^{(x)}_s(I_a)|$ will be eventually at most $(\epsilon/2) \E_x(|N_{t_k}|)$.

Finally, thanks to monotonicity of $s \mapsto n_x(s)$ for $s$ large again, the left-hand side of the right inequality in~\eqref{eq:interp} is at most
\begin{equation}\label{eq:50}
\frac{|N_{t_k}^{(x)}(I_a)| - 
	\inf_{s \in [t_k, t_{k+1})} |N_s^{(x)}(I_a)|}
{n_x(t_{k+1})}
+ \frac{|N_{t_k}^{(x)}(I_a)|}{n_x(t_k)}
\Big(1 - \frac{n_x(t_k)}{n_x(t_{k+1})}\Big) \,.
\end{equation}
Since $\E_x(|N_{t_k}|) \sim n_x(t_k) \leq n_x(t_{k+1})$ for $k$ large enough, what we have just shown implies that, almost surely, the first term will eventually be below $3\epsilon/4$. At the same time, while the first factor in the second term tends to a finite limit in light of~\eqref{eq:exlim}, the second factor
goes to $0$, and therefore the second term will become smaller than $\epsilon/4$ for all large enough $k$ almost surely. Together with the bound on the first term, this shows the second inequality in~\eqref{eq:interp} for all $k$ large enough and completes the proof.

\section*{Acknowledgments}
We would like to thank P. Maillard and the anonymous referee for pointing out to us that Proposition~\ref{theo:main2} was already proved in~\cite{Kyp2004}. Santiago Saglietti would also like to thank Bastien Mallein for very helpful discussions on these topics. The work of O.L. was supported in part by the European Union's - Seventh Framework Program (FP7/2007-2013) under grant agreement no. 276923 -- M-MOTIPROX. 
The work of S.S. was supported by the Israeli Science Foundation grant no. 1723/14 -- Low Temperature Interfaces: Interactions, Fluctuations and Scaling.
\bibliographystyle{plain}
\bibliography{biblio4}

\end{document}